                \pgfextractx{\pgf@xa}{\pgfpointanchor{\tikztostart}{east}}
                \pgfextractx{\pgf@xb}{\pgfpointanchor{\tikztotarget}{west}}
                \pgfextracty{\pgf@ya}{\pgfpointanchor{\tikztostart}{center}}
                \pgfextracty{\pgf@yb}{\pgfpointanchor{\tikztotarget}{center}}
                \edef\tikzstartx{\the\pgf@xa}
                \edef\tikzendx{\the\pgf@xb}
                \edef\midy{\the\dimexpr0.5\dimexpr\pgf@ya\relax +0.5\dimexpr\pgf@yb\relax}
\crefname{diagram}{diagram}{diagrams}
\crefname{sseq}{}{}
\crefname{equation}{}{}
\crefname{equation-b}{Equation}{Equations}
\newcommand{\stkout}[1]{\ifmmode\text{\sout{\ensuremath{#1}}}\else\sout{#1}\fi}
\pretocmd{\maketitle}{%
    \edef\@title{\unexpanded{\protect\Large}\unexpanded\expandafter{\@title}}%
    \edef\authors{\unexpanded{\protect\normalsize}\unexpanded\expandafter{\authors}}%
}{}{\error}
\def\addlabeltolink#1{{\addlabeltolink@#1}}
\def\addlabeltolink@#1#2#3#4{#1{#2}{#3}{\thecontentslabel. #4}}
\titleformat{\section}[block]{\centering\bfseries\Large}{\thetitle. }{0pt}{}
\titlespacing{\section}{0pt}{*4}{*2}
\titleformat{\subsection}[hang]{\bfseries}{\thetitle. }{0pt}{}
\titlespacing{\subsection}{0pt}{*2}{*1.5}
\def\@secnumpunct{. }
\xpatchcmd{\proof}{\topsep6\p@\@plus6\p@\relax}{\topsep0pt\relax}{}{\error}
\newcommand{\susp}{\Sigma}
\newcommand{\suspinfty}{\Sigma^{\infty}}
\newcommand{\mmod}{\! \sslash \!}
\newcommand\Einfty{\mathbb{E}_{\infty}}
\def\F{\mathbb{F}}
\newcommand{\mr}[1]{\mathrm{#1}}
\newcommand{\mfrak}{\mathfrak{m}}
\newcommand{\Fp}{\FF_p}
\newcommand{\Qp}{\QQ_p}
\newcommand{\Gmhat}{\widehat{\mathbb{G}}_m}
\newcommand{\Gammatwee}{\widetilde{\Gamma}}
\newcommand{\CP}{\mathbb{CP}}
\newcommand{\CPinfty}{\CP^{\infty}}
\newcommand{\order}{\Theta}
\newcommand{\Sp}{\mathbb{S}}
\newcommand{\HZ}{\mr{H}\ZZ}
\newcommand{\R}{\mr{R}}
\newcommand{\HF}{\mr{H}\mathbb{F}}
\newcommand\HFp{\HF_p}
\newcommand\FFtwee{\widetilde{\FF}}
\newcommand{\BU}{\mr{BU}}
\newcommand{\MU}{\mr{MU}}
\newcommand{\MUn}{\MU[n]}
\newcommand{\KU}{\mr{KU}}
\newcommand{\KO}{\mr{KO}}
\newcommand{\KUhat}{\KU^{\raisebox{0.2ex}{$\scriptscriptstyle\wedge$}}}
\newcommand{\KOhat}{\KO^{\raisebox{0.1ex}{$\scriptscriptstyle\wedge$}}}
\newcommand{\Ects}{\E^{\raisebox{0.35ex}{$\scriptscriptstyle\wedge$}}}
\newcommand{\MapsCts}{\Maps^{\mathrm{cts}}}
\DeclareMathOperator{\Maps}{Maps}
\newcommand{\BO}{\mr{BO}}
\newcommand{\EO}{\mr{EO}}
\newcommand\E{\mr{E}}
\newcommand\ER{\mr{ER}}
\newcommand{\EEO}{\mr{E}^{\EO}}
\newcommand{\DEO}{\mr{D}_{\EO}}
\newcommand{\KEO}{\mr{K}^{\EO}}
\newcommand{\M}{\mr{M}}
\newcommand{\Ctw}{\mr{C}_2}
\newcommand{\Cp}{\mr{C}_{p}}
\newcommand\hCp{\mr{hC}_p}
\newcommand\tSo{\mr{tS}^1}
\newcommand\hG{\mr{hG}}
\newcommand\tG{\mr{tG}}
\newcommand{\bk}{{\upbeta_k}}
\newcommand{\obk}{{\hat{\upbeta}_k}}
\newcommand\EOmod{\mathop{\EO\textup{--}\mathrm{Mod}}}
\newcommand\EEOEcomod{\Hom_{\EEO_*\E}}
\newcommand\ECp{\E_*[\Cp]^{\sigma}}
\newcommand\ECpsigma{\E_*[\Cp]^{\sigma}}
\newcommand\KCp{\K_*[\Cp]}
\newcommand\grKCp{\gr \K_*[\Cp]}
\newcommand\grK{\gr\K}
\newcommand{\BP}{\mr{BP}}
\newcommand{\K}{\mr{K}}
\newcommand{\LK}{\Lmr_{\mr{K}}}
\newcommand{\A}{\mathcal{A}}
\newcommand{\orderB}[1]{\order( #1 )}
\newcommand{\sk}{{\sf sk}}
\newcommand{\cosk}{{\sf cosk}}
\def\@url#1{{\tt\def~{\lower3.5pt\hbox{\char'176}}\def\_{\char'137}#1}}
\let\c@lemma\c@theorem
\newtheorem{thm}[equation]{Theorem}
\newtheorem{main}[equation]{Main Theorem}
\newtheorem{cor}[equation]{Corollary}
\newtheorem{lem}[equation]{Lemma}
\newtheorem{prop}[equation]{Proposition}
\newtheorem{conjecture}[equation]{Conjecture}
\theoremstyle{definition}
\newtheorem{defn}[equation]{Definition}
\newtheorem{ex}[equation]{Example}
\newtheorem{rmk}[equation]{Remark}
\newtheorem{notation}[equation]{Notation}
\newtheorem*{thm*}{Theorem}
\newtheorem*{cor*}{Corollary}
\newtheorem*{lem*}{Lemma}
\newtheorem*{prop*}{Proposition}
\newtheorem*{not*}{Notation}
\newtheorem*{guess*}{Guess}
\newtheorem*{defn*}{Definition}
\newtheorem*{ex*}{Example}
\newtheorem*{exs*}{Examples}
\newtheorem*{rmk*}{Remark}
\newtheorem*{claim*}{Claim}
\newtheorem*{exer*}{Exercise}
\numberwithin{equation}{section}
\numberwithin{figure}{section}
\let\c@lem=\c@thm
\let\c@cor=\c@thm
\let\c@prop=\c@thm
\let\c@lem=\c@thm
\let\c@ex=\c@thm
\let\c@exs=\c@thm
\let\c@obs=\c@thm
\let\c@rmk=\c@thm
\let\c@perthm=\c@thm
\let\c@conjtel=\c@thm
\let\c@exmps=\c@thm
\let\c@rem=\c@thm
\let\c@question=\c@thm
\let\c@warn=\c@thm
\let\c@claim=\c@thm
\let\c@quest=\c@thm
\let\c@notation=\c@thm
\let\c@note=\c@thm
\let\c@conjtel=\c@thm
\let\c@gue=\c@thm
\let\c@goal=\c@thm
\DeclareMathOperator{\Ext}{Ext}
\DeclareMathOperator{\Hom}{Hom}
\DeclareMathOperator{\Aut}{Aut}
\DeclareMathOperator{\End}{End}
\DeclareMathOperator{\Fil}{Fil}
\DeclareMathOperator{\gr}{\smash{\mathrm{gr}}}
\DeclareMathOperator{\Res}{Res}
\DeclareMathOperator{\Map}{Map}
\DeclareMathOperator*{\colim}{colim}
\DeclareMathOperator{\Th}{Th} 
\DeclareMathOperator{\GL}{GL_1}
\DeclareMathOperator{\BGL}{BGL_1}
\DeclareMathOperator{\id}{id}
\DeclareMathOperator{\ev}{ev}
\renewcommand\H{\mathrm{H}}
\DeclareMathOperator\cofiber{Cofib}
\def\makecommands#1#2#3{
    \bgroup
    \def\tempcmdname##1{#1}
    \def\tempcmdbody##1{#2}
    \def\\##1{\expandafter\xdef\csname\tempcmdname{##1}\endcsname{\unexpanded\expandafter{\tempcmdbody{##1}}}}
    #3
    \egroup
}
\def\upperalphabet{\\A\\B\\C\\D\\E\\F\\G\\H\\I\\J\\K\\L\\M\\N\\O\\P\\Q\\R\\S\\T\\U\\V\\W\\X\\Y\\Z}
\def\loweralphabet{\\a\\b\\c\\d\\e\\f\\g\\h\\i\\j\\k\\l\\m\\n\\o\\p\\q\\r\\s\\t\\u\\v\\w\\x\\y\\z}
\def\lowergreekalphabet{\\\alpha\\\beta\\\gamma\\\delta\\\epsilon\\\zeta\\\eta\\\theta\\\kappa\\\lambda\\\mu\\\nu
    \\\xi\\\pi\\\rho\\\sigma\\\tau\\\upsilon\\\psi\\\chi\\\phi\\\omega}
\newcommand\sfsbar{\overline{\sfs}}
\newcommand\limone{\lim\nolimits^1}
\def\rcolon{\nobreak {}\nonscript\mskip 6muplus1mu{:} \mskip 2mu\mathpunct\relax }
\newcommand{\sma}{\wedge}
\newcommand{\sm}{\wedge}
\definecolor{limegreen}{rgb}{0.2, 0.8, 0.2}
\definecolor{darkmagenta}{rgb}{0.55, 0.0, 0.55}
\definecolor{lavenderrose}{rgb}{0.91, 0.33, 0.5}
\definecolor{goldenpoppy}{rgb}{0.99, 0.76, 0.0}
\definecolor{seagreen}{rgb}{0.11, 0.35, 0.02}
\definecolor{maroon}{RGB}{128,0,0}
\definecolor{darkviolet}{RGB}{148,0,211}
\def\tikzcdequalsignoffset{0.1em}
\def\findedgesourcetarget#1#2{
    \let\sourcecoordinate\pgfutil@empty
    \ifx\tikzcd@startanchor\pgfutil@empty 
        \def\tempa{\pgfpointanchor{#1}{center}}
    \else
        \edef\tempa{\noexpand\pgfpointanchor{#1}{\expandafter\@gobble\tikzcd@startanchor}} 
        \let\sourcecoordinate\tempa
    \fi
    \ifx\tikzcd@endanchor\pgfutil@empty 
        \def\tempb{\pgfpointshapeborder{#2}{\tempa}}
    \else
        \edef\tempb{\noexpand\pgfpointanchor{#2}{\expandafter\@gobble\tikzcd@endanchor}}
    \fi
    \let\targetcoordinate\tempb
    \ifx\sourcecoordinate\pgfutil@empty%
        \def\sourcecoordinate{\pgfpointshapeborder{#1}{\tempb}}%
    \fi
}
\tikzset{/tikz/commutative diagrams/equal/.style=equals,
    /tikz/commutative diagrams/equals/.style = {
    -,
    to path={\pgfextra{
        \findedgesourcetarget{\tikzcd@ar@start}{\tikzcd@ar@target} 
        \ifx\tikzcd@startanchor\pgfutil@empty
            \def\tikzcd@startanchor{.center}
        \fi
        \ifx\tikzcd@endanchor\pgfutil@empty
            \def\tikzcd@endanchor{.center}
        \fi
        \pgfmathanglebetweenpoints{\pgfpointanchor{\tikzcd@ar@start}{\expandafter\@gobble\tikzcd@startanchor}}{\pgfpointanchor{\tikzcd@ar@target}{\expandafter\@gobble\tikzcd@endanchor}}
        \pgftransformrotate{\pgfmathresult}
        \pgfpathmoveto{\pgfpointadd{\sourcecoordinate}{\pgfpoint{0}{\tikzcdequalsignoffset}}}
        \pgfpathlineto{\pgfpointadd{\targetcoordinate}{\pgfpoint{0}{\tikzcdequalsignoffset}}}
        \pgfpathmoveto{\pgfpointadd{\sourcecoordinate}{\pgfpoint{0}{-\tikzcdequalsignoffset}}}
        \pgfpathlineto{\pgfpointadd{\targetcoordinate}{\pgfpoint{0}{-\tikzcdequalsignoffset}}}
        \pgfusepath{draw}
}}}}
\title{On the $\EO$-orientability of vector bundles}
\author{P.~Bhattacharya}\address{University of Virginia}\email{pb9wh@virginia.edu}
\author{H.~Chatham}\address{Massachusetts Institute of Technology }\email{hood@mit.edu}
\thanks{}
\begin{document}

\begin{abstract}
We study the orientability of vector bundles with respect to a family of
cohomology theories called $\mathrm{EO}$-theories. The $\mathrm{EO}$-theories
are higher height analogues of real $\mathrm{K}$-theory $\mathrm{KO}$. For each
$\mathrm{EO}$-theory, we prove that the direct sum of $i$ copies of any vector
bundle is $\mathrm{EO}$-orientable for some specific integer $i$. Using a
splitting principal, we reduce to the case of the canonical line bundle over
$\mathbb{CP}^{\infty}$. Our method involves understanding the action of an order
$p$ subgroup of the Morava stabilizer group on the Morava $\mathrm{E}$-theory of
$\mathbb{CP}^{\infty}$. Our calculations have another application: We determine
the homotopy type of the $\mathrm{S}^{1}$-Tate spectrum associated to the
trivial action of $\mathrm{S}^{1}$ on all $\mathrm{EO}$-theories.
\end{abstract}

\maketitle	


\section{Introduction}
\label{sec:intro}
A real vector bundle $\upxi$ is called orientable with respect to a cohomology
theory $\mr{E}$ if it cannot distinguish between $\upxi$ and the trivial vector
bundle of the same dimension over the same base space, which we will denote by
$\upepsilon^{\dim \upxi}$. In other words, there is an $\mr{E}$-Thom isomorphism
\[ 
\mr{E} \sma \mr{Th}(\upxi)  \simeq \mr{E} \sma  \mr{Th}(\upepsilon^{\dim \upxi}) 
\]
where $\mr{Th}(\upxi)$ denotes the Thom space associated to $\upxi$ (see
\eqref{Thomspace}). The study of the orientability of vector bundles with
respect to cohomology theories has led to  foundational results in both
geometry and algebraic topology.  
For example, the $\mr{H}\F_2$-orientability of all vector bundles leads to the
notion of Stiefel-Whitney classes and the $\mr{H}\ZZ$-orientability of complex
vector bundles leads to Chern classes.  It is a standard fact that all complex
vector bundles are $\KU$-orientable, however, not all of them are
$\KO$-orientable. The following examples motivate some of the work in this
paper.
\begin{ex} 
The complex tautological line bundle $\upgamma^{1}$ over $\CP^1 \simeq \mr{S}^2$
is not $\KO$-orientable because the Thom isomorphism fails:
\[ \KO \sma  \Th(\upgamma^{1})  \not\simeq  \KO \sma \mr{Th}(\upepsilon^2)  .\]
On one hand,  $\Th(\upgamma^{1})\simeq \CP^2$ (see \Cref{thomCP})  and $  \KO \sma \CP^2  \simeq \KU$.
 On the other hand, $\mr{Th}(\upepsilon^2) \simeq \Sigma^2
\CP^1_+ \simeq \mr{S}^2 \vee \mr{S}^4$ (see \Cref{ex:Thomtriv}) and 
$ \KO \sm \mr{Th}(\upepsilon^2) \simeq \Sigma^2 \KO \vee \Sigma^4 \KO$. 
\end{ex}
\begin{ex}
While $\upgamma^1$ is not $\KO$-orientable, its $2$-fold direct sum
$\upgamma^{1} \oplus \upgamma^{1}$ is. In fact, $\upxi^{\oplus 2}$ is
$\KO$-orientable for every complex vector bundle $\upxi$ (see
\Cref{splitting-principal}).
\end{ex}
From the point of view of chromatic homotopy theory, $2$-completed $\KO$ is part
of a family of cohomology theories called the $\EO$-theories.  Associated to
each finite height formal group $\Gamma$ over a perfect field $\FF$ of
characteristic $p$, there is an associated Morava $\E$-theory $\E_{\Gamma}$ with
an action of $\Aut(\Gamma)$, often called the small Morava Stabilizer group.
When  the cyclic group of order $p$ acts faithfully on $\Gamma$, we define the spectrum $\EO_{\Gamma}$ as the homotopy
fixed point spectrum \[ \EO_{\Gamma} \coloneqq \E^{\hCp}_{\Gamma}.\] 
\begin{ex}
When $p=2$ and $\Gmhat$ is the multiplicative formal group over
$\FF_2$, there is a canonical action of $\Ctw$ on $\Gmhat$. In this case,
$\E_{\Gmhat} \simeq \KUhat_2$ and $\EO_{\Gmhat} \simeq \KOhat_2$. 
\end{ex}
\begin{rmk} \label{heightrestrict} Let $n$ denote the height of the formal group
$\Gamma$.  If $p-1$ divides $n$  and the base field $\FF$ of $\Gamma$ is
algebraically closed, then $\Aut(\Gamma)$ contains a subgroup of order $p$ which
is unique up to conjugation. If $\FF$ is not algebraically closed  then any
number of conjugacy classes of subgroups of order $p$ (including zero) is
possible depending on $\Gamma$. If $p-1$ does not divide $n$, then
$\Aut(\Gamma)$ does not contain any subgroup of order $p$ (see
\cite{Serre-LCFT}). 
\end{rmk}
The main goal of this paper is to prove the following result:
\begin{main}
\label{main1} Let $p$ be a prime and $k>0$. The $p^{p^{k} -1}$-fold direct sum
of any $\CC$-vector bundle is $\EO_{\Gamma}$-orientable if the height of  formal
group $\Gamma$ is  $(p-1)k$. 
 \end{main}

For  a ring spectrum  $\R$, the \emph{$\R$-orientation
order} of a vector bundle $\upxi$, denoted by $\orderB{\R, \upxi}$, is the
smallest positive integer $d$ for which the $d$-fold direct sum is
$\mr{R}$-orientable (see \Cref{defn:orientorder} and \Cref{rmk:geointerpret}). 

The proof of \Cref{main1} reduces to the study of a single vector bundle, namely the tautological line bundle $\upgamma$ over
$\CPinfty$, because of a splitting principal which asserts that 
$\orderB{\upxi, \R}$ divides $\orderB{\upgamma, \R}$ for any $\CC$-vector bundle $\upxi$ (see \Cref{splitting-principal}). Thus, \Cref{main1} follows from the following result.
\begin{main} \label{main1.5}  $\orderB{ \upgamma, \EO_{\Gamma}}$ divides $p^{p^k -1}$. 
\end{main}

\Cref{main1.5} is  a consequence of the study of the action of
$\mr{C}_p \subset \Aut(\Gamma)$ on $\mr{E}_{\Gamma}^{\ *}\CP^\infty$. We first
show that  $\mr{E}_{\Gamma}^{\ *}\CP^\infty$ admits a ``$\text{Free}\oplus\text{Finite}$'' decomposition as
a $\Cp$-module (see \Cref{ECPinfty-decomposition}). Then, using the relative Adams spectral sequence
\cite{BakerLazarev} we lift this algebraic splitting to obtain the following
$\EO_{\Gamma}$-module splitting:
\begin{main}
\label{main2} In the category of $\EO_{\Gamma}$-modules there is a splitting 
\[ \EO_{\Gamma} \sma \CPinfty_+ \simeq \Mcal\vee \Fcal, \] 
where  $\Mcal$ is a compact $\EO_{\Gamma}$-module and $\Fcal\simeq \bigvee_{\NN} \E_{\Gamma}$ is a wedge of Morava $\E$-theories.  
\end{main}
 Further, we show that the inclusion map
 \[
 \begin{tikzcd}
 \Mcal \rar[] & \EO_{\Gamma} \sm \CPinfty_+
 \end{tikzcd} 
 \]
 of the compact summand  factors through $\EO_{\Gamma} \sm \CP^{p^{k+1} - p^k
 -1}_+$ (see \Cref{M-factor-skeleton}). Consequently 
\[ \Theta(\EO_{\Gamma}, \upgamma) = \Theta(\EO_{\Gamma}, \upgamma^{p^{k+1}-p^k-1}),\] 
where $\upgamma^d$ is the restriction of $\upgamma$ to $\CP^d$ (see
\Cref{cor:order-reduce}).  Atiyah and Todd \cite{AtiyahTodd} computed
$\Theta(\SS, \upgamma^{d})$ for all $d$ which serves as an upper bound for
$\Theta(\EO_{\Gamma}, \upgamma^{d})$ and leads to \Cref{main1.5} (see
\Cref{complex-James-periodicity-formula}).

Let $\K_{\Gamma} := \E_{\Gamma}/\mathfrak{m}$ denote the associated Morava
$\mr{K}$-theory.  As an application of \Cref{main2}, we prove: 
\begin{main}
\label{main-tate}
Let $\Smr^1$ act trivially on $\EO_{\Gamma}$. There is a $\K_{\Gamma}$-local equivalence
\[
    \EO_{\Gamma}^{\tSo} \simeq \prod_{-\infty <k<\infty}  \E_{\Gamma}.
\]
\end{main}

\begin{rmk} \label{KOsplit}
\Cref{main2} is a generalization of the splitting \cite[\textsection 15]{GMTate}
\begin{equation}
\label{KOsmCPinfty}
\KO \sm \CPinfty_+ \simeq \KO\vee \bigvee_{k \geq 1} \Sigma^{4k-2} \KU
\end{equation}
and \Cref{main-tate} is a generalization of the fact that
\[ \KO^{\mr{tS^1}} \simeq  \prod_{-\infty <k<\infty} \KU.  \]
\end{rmk}

\begin{rmk}
\label{Hood}
The second author studied $\EO_{\Gamma}$-orientations when $\Gamma$ has height
$p-1$ in his previous work. He proved \cite[Corollary 1.6]{Chatham} that in this
case
\[\orderB{\EO_{\Gamma}, \upgamma} =p.\]
Thus, when $n=p-1$ our bound is not sharp.
\end{rmk}

\begin{rmk} \label{LKW}     
At $p=2$, Kitchloo, Lorman and Wilson \cite{KitchlooLormanWilson,KitchlooWilson}
studied a similar problem. There is a $\Ctw$-action on height $n$ Johnson-Wilson
theory $\E(n)$. The $\Ctw$ fixed points are commonly called real Johnson-Wilson
theory, denoted $\ER(n)$. In \cite{KitchlooLormanWilson,KitchlooWilson}, the
authors use genuine $\Ctw$-equivariant homotopy theory to deduce that 
$\orderB{\ER(n), \upgamma}=2^{n}$. Hahn and Shi \cite{HahnShi} proved that there is
a homotopy ring map $\ER(n)\to \EO_{\Gamma}$, where $\Gamma$ is any
height $n$ formal group over a perfect field of characteristic $2$. Combining
these facts one concludes that $\orderB{\EO_{\Gamma}, \upgamma}$ divides $2^n$  when $\Gamma$
has height $n$. Thus, when $p=2$ our bound on $\orderB{\EO_{\Gamma}, \upgamma}$ is
not sharp.
\end{rmk}
We conjecture:
\begin{conjecture}
\label{order-of-EO-conjecture} If the height of the formal group $\Gamma$ is $n=(p-1)k$, then
$\orderB{\EO_{\Gamma}, \CC } = p^k.$
\end{conjecture}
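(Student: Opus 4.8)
We sketch the route we would take toward \Cref{order-of-EO-conjecture}. By the splitting principle (\Cref{splitting-principal}) the statement is equivalent to $\orderB{\EO_\Gamma,\upgamma}=p^{k}$, and by \Cref{cor:order-reduce} only the restriction of $\upgamma$ to the finite skeleton $\CP^{d}$, $d=p^{k+1}-p^{k}-1$, is at stake. Writing $\CP^{\infty}_{i}\coloneqq\Th(\upgamma^{\oplus i})$ for the Thom spectrum, $\upgamma^{\oplus i}$ is $\EO_\Gamma$-orientable precisely when the $\E_\Gamma$-Thom class --- under the complex orientation it is the $i$-th power of the Euler class $x$ of $\upgamma$ --- lifts through the edge map of the $\Cp$-homotopy fixed point spectral sequence
\[
    \mr{H}^{s}\!\big(\Cp;\E_\Gamma^{t}(\CP^{\infty}_{i})\big)\Longrightarrow \EO_\Gamma^{t-s}(\CP^{\infty}_{i})
\]
to a class restricting to the generator on the bottom cell. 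So the plan is to prove two sharpenings of what the paper already establishes: \emph{(i)} the compact $\EO_\Gamma$-module $\Mcal$ of \Cref{main2} is ``$\upgamma$-twist periodic of period $p^{k}$'' --- Thom-twisting its defining line bundle by $p^{k}\upgamma$ returns $\Sigma^{2p^{k}}\Mcal$ --- which gives $\orderB{\EO_\Gamma,\upgamma}\mid p^{k}$, improving the Atiyah--Todd bound $p^{p^{k}-1}$ of \Cref{main1} (that bound used only base change along $\Sbb\to\EO_\Gamma$); and \emph{(ii)} no smaller period works, which gives $p^{k}\mid\orderB{\EO_\Gamma,\upgamma}$. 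In both parts the $E_{2}$-page is handled by the ``$\mathrm{Free}\oplus\mathrm{Finite}$'' decomposition of \Cref{ECPinfty-decomposition}, and the free summand $\Fcal\simeq\bigvee_{\NN}\E_\Gamma$ contributes nothing since every complex bundle is $\E_\Gamma$-orientable.

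For the lower bound \emph{(ii)} we would track the first obstruction to lifting $x^{i}$. A generator $\sigma\in\Cp\subset\Aut(\Gamma)$ twists $x$ by the power series $[\sigma](x)$ representing $\sigma$ as an automorphism of $\Gamma$; because $\sigma$ has order $p$ and the height of $\Gamma$ is $(p-1)k$, the order-$p$ elements of the Morava stabilizer group sit in filtration $k=n/(p-1)$, so $[\sigma](x)\equiv x+(\text{unit})\,x^{p^{k}}$ modulo $\mfrak$ and higher powers of $x$, whence the twisting unit satisfies $[\sigma](x)/x\equiv 1+(\text{unit})\,x^{p^{k}-1}\pmod{(\mfrak,x^{p^{k}})}$. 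Feeding this in, the putative invariant lift of $x^{i}$ must solve a twisted descent equation whose obstructions, organized by the $\mfrak$-adic (equivalently the $v_{0},\dots,v_{k-1}$-adic) filtration on $\E_\Gamma^{0}$, force --- one power of $p$ at a time --- that $p\mid i$, then $p^{2}\mid i$, and so on up to $p^{k}\mid i$. This is consistent with the known cases: for $k=1$ it reproduces $\orderB{\EO_\Gamma,\upgamma}\geq p$ (\Cref{Hood}) as a pure $E_{2}$-computation, and for $p=2$ it matches the lower bound underlying \Cref{LKW}. The content is that these successive obstructions --- a mixture of $E_{2}$-level phenomena at deeper $\mfrak$-adic levels and genuine differentials --- are nonzero, which is exactly where the fine structure of \Cref{ECPinfty-decomposition} enters.

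The upper bound \emph{(i)} is the main obstacle. Already at $E_{2}$ there is an invariant Thom-class candidate for $\upgamma^{\oplus p}$, namely the $\Cp$-norm $\Norm_{\Cp}(x)=\prod_{j=0}^{p-1}[\sigma^{j}](x)$, which restricts to the generator on the bottom cell; the point of the conjecture is that for $k\geq 2$ this class (and its obvious modifications) does \emph{not} survive the spectral sequence, and that passing to the twist by $p^{k}\upgamma$ is exactly what kills the remaining differentials. Making this precise amounts to computing the $\Cp$-homotopy fixed point differentials on the finite summand $\Mcal$ --- equivalently, the exponent governing $\EO_\Gamma$-James periodicity on the skeleton $\CP^{d}$ --- which the relative Adams spectral sequence of \cite{BakerLazarev} together with \Cref{ECPinfty-decomposition} should in principle reach but do not currently pin down. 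The cleanest way to control all the differentials at once is probably to lift the problem to a genuine $\Cp$-equivariant form of $\E_\Gamma$ in the spirit of Hill--Hopkins--Ravenel, where a slice- or fixed-point-filtration argument should produce the vanishing line corresponding to period $p^{k}$; alternatively, a uniform odd-primary analogue of the Hahn--Shi map $\ER(n)\to\EO_\Gamma$, coming from a height-$n$ ``$\Cp$-real'' theory whose orientation order is already known to equal $p^{k}$, would give the upper bound immediately. Showing that no higher differential pushes the orientation order back above $p^{k}$ is, in our estimation, the crux of \Cref{order-of-EO-conjecture}.
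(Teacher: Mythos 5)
This statement is \Cref{order-of-EO-conjecture}, which the paper explicitly leaves as an \emph{open conjecture}: no proof appears anywhere in the text. Your submission is likewise, and self-avowedly, a sketch of a research program rather than a proof, so there is no proof on either side to compare. The honest thing to report is that you have not closed the gap and neither have the authors; your outline does correctly identify the same consistency checks the paper records in \Cref{Hood} ($k=1$ gives $p$) and \Cref{LKW} ($p=2$ gives $2^n$), and your reduction via \Cref{splitting-principal} and \Cref{cor:order-reduce} and your use of the filtration estimate on an order-$p$ element of $\Aut(\Gamma)$ (the content of \Cref{tk-valuation}) are all consistent with the machinery the paper actually develops toward \Cref{main1.5}.

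That said, two soft spots in your sketch are worth flagging. For the lower bound, the claim that successive $\mfrak$-adic obstructions force $p\mid i$, then $p^2\mid i$, up to $p^k\mid i$, is an assertion, not an argument; you also conflate the $\mfrak$-adic filtration (which has $n=(p-1)k$ generators $p,u_1,\dots,u_{n-1}$) with a ``$v_0,\dots,v_{k-1}$-adic'' filtration (only $k$ generators) --- these coincide only when $p=2$, and it is precisely the passage from $p=2$ to odd $p$ that is the open case. For the upper bound, both routes you offer --- a genuine $\Cp$-equivariant lift of $\E_\Gamma$ with a slice-theoretic vanishing line, and an odd-primary analogue of the Hahn--Shi map $\ER(n)\to\EO_\Gamma$ from a height-$n$ theory with known orientation order $p^k$ --- are themselves unresolved problems, so invoking them is circular at the level of a proof. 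What the paper actually \emph{proves} is the weaker bound $\orderB{\EO_\Gamma,\upgamma}\mid p^{p^k-1}$ via \Cref{EO-Thom-iso} and \Cref{cor:order-reduce} together with the Atiyah--Todd bound of \Cref{complex-James-periodicity-formula}; improving $p^{p^k-1}$ to $p^k$ is exactly the open content of \Cref{order-of-EO-conjecture}, and your proposal, while a reasonable list of ideas to try, does not supply that improvement.
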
 

A  commutative ring spectrum  $\R$ is called complex orientable if $\upgamma$ is
$\mr{R}$-orientable. In our language, $\R$ is complex orientable if and only if
$\Theta(\R, \upgamma) =1$. Quillen proved  that an $\R$-orientation of
$\upgamma$ is equivalent data to a homotopy ring map $\MU\to \R$ (see \cite[Part II, Lemma~4.6]{Adams}
and \cite[Lemma 4.1.13]{Ravenel} for a statement). Let $\MU[n]$
be the Thom spectrum of the map
\[ \varphi_n\colon \BU\to\BU \]  
given by multiplication by $n$ in the additive $\Einfty$-structure.  If $\R$ is
a ring spectrum $\R$ such that $\Theta(\R, \CC)$ divides $n$ 
we prove that there is a map $\MU[n] \to \R$ using a theorem of
Segal \cite{Segal} (see \Cref{htpy-ring-complex-n-orientation}).  \Cref{main1} and
\Cref{htpy-ring-complex-n-orientation} imply the following corollary.
\begin{cor} \label{cor:applyQuillen} 
There exists a map $\MU[n] \to \EO_{\Gamma}$ whenever $p^{p^k -1}$
divides $n$.
\end{cor}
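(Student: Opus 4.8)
The plan is to combine the bound of \Cref{main1} with \Cref{htpy-ring-complex-n-orientation}; the argument is a short divisibility deduction once those two results are in hand. As throughout the paper, $\Gamma$ is assumed to have height $(p-1)k$, so that the integer $k$ appearing in $p^{p^k-1}$ is the one attached to $\Gamma$.

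First I would record that $\Theta(\EO_{\Gamma},\CC)$ divides $p^{p^k-1}$. By the splitting principle \Cref{splitting-principal}, the orientation order $\Theta(\EO_{\Gamma},\upxi)$ of any $\CC$-vector bundle $\upxi$ divides $\Theta(\EO_{\Gamma},\upgamma)$, so $\Theta(\EO_{\Gamma},\CC)$ --- the least common multiple of these orders over all $\CC$-bundles --- equals $\Theta(\EO_{\Gamma},\upgamma)$, which divides $p^{p^k-1}$ by \Cref{main1.5} (equivalently, this is the content of \Cref{main1}). Since divisibility is transitive and $p^{p^k-1}\mid n$ by hypothesis, $\Theta(\EO_{\Gamma},\CC)\mid n$. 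Applying \Cref{htpy-ring-complex-n-orientation} to the commutative ring spectrum $\R=\EO_{\Gamma}$, whose hypothesis $\Theta(\R,\CC)\mid n$ is now verified, yields a homotopy ring map $\MU[n]\to\EO_{\Gamma}$, as claimed.

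There is no genuine obstacle at this stage: all the substance lies upstream, in the $\Cp$-module decomposition of $\E_{\Gamma}^{*}\CPinfty$ and the resulting $\EO_{\Gamma}$-module splitting of \Cref{main2} underlying \Cref{main1.5}, and in Segal's theorem, which powers the construction of maps out of the Thom spectrum $\MU[n]$ in \Cref{htpy-ring-complex-n-orientation}. The one point to keep straight is the bookkeeping identifying $\Theta(\R,\CC)$ with $\Theta(\R,\upgamma)$ via \Cref{splitting-principal}, so that the conclusion of \Cref{main1.5} is precisely the hypothesis required by \Cref{htpy-ring-complex-n-orientation}.
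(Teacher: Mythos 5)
Your argument is correct and matches the paper's (implicit) derivation: \Cref{main1.5} together with \Cref{splitting-principal} gives $\Theta(\EO_{\Gamma},\upgamma)\mid p^{p^k-1}\mid n$, and \Cref{htpy-ring-complex-n-orientation} then produces the desired map. One small overclaim to correct: per \Cref{rmk:htpy-ring-complex-n-orientation}, \Cref{htpy-ring-complex-n-orientation} only yields a \emph{unital} map $\MU[n]\to\EO_{\Gamma}$, not a homotopy ring map; this does not affect the corollary, which asserts only the existence of a map.
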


\subsection*{Acknowledgements}
The authors have benefitted from conversations with Robert Berklund, Pavel
Etingof, Paul Goerss, Jeremy Hahn, Nick Kuhn, Vitaly Lorman, Mike Mandell,
Haynes Miller, Eric Peterson, JD Quigley, Chris Ryba, Andy Senger, Siddharth
Venkatesh and Zhouli Xu.

\section{A brief review of orientation theory}
\label{sec:background}
The goal of this section is to prove a  splitting principle
(see \Cref{splitting-principal}) and generalize a result of Quillen (see
\Cref{htpy-ring-complex-n-orientation}). We do so after reviewing orientation
theory and its connection to chromatic homotopy theory.

Although many of the statements in this
section are known to be true more generally for homotopy commutative ring spectra, we restrict to ones with  an $\Einfty$ structure  
because it is a requirement in the proof of \Cref{htpy-ring-complex-n-orientation} and
\Cref{splitting-principal}.
This is acceptable for our application, namely \Cref{cor:applyQuillen}, because
$\EO_{\Gamma}$ is an $\Einfty$-ring spectrum.
\subsection{Orientation with respect to cohomology theories}
Given an  $\RR$-vector bundle $\upxi$ over a base space $\Bmr_{\upxi}$ with total space
$\Emr_\upxi$, the Thom space of $\xi$ is defined as the cofiber of the projection map
\begin{equation} \label{Thomspace}
\Th(\upxi) \coloneqq \cofiber((\Emr^{\times}_\upxi)_+ \to \Bmr_{\upxi+}),
\end{equation}
where $\Emr^{\times}_{\upxi}$ denotes the complement of the zero section. The
\emph{Thom spectrum} of $\upxi$ is the suspension spectrum of $\Th(\upxi)$:
\[ \Mmr(\upxi) := \Sigma^{\infty} \Th(\upxi)\]
\begin{ex} \label{ex:Thomtriv}
When $\upxi \cong \upepsilon^r$ is a trivial bundle of rank $r$ then
\[ \Th(\upxi) \simeq \susp^{r}\Bmr_{\upxi+}, \]
where $\Bmr_{\upxi+}$ is the base space of $\upxi$.
\end{ex}
\begin{ex}  \label{thomCP} For the complex tautological line
bundle $\upgamma^{n}$ over $\CP^n$,
$\Th\!\left(k\upgamma^n\right)$ is equivalent to
\[ \CP^{n+k}_{k} \coloneqq \cofiber( \CP^{k-1}_+ \to \CP^{n+k}_+), \]
where $k$ is a positive integer and $1 \leq n \leq \infty$. 
\end{ex}
The notion of Thom space does not extend to virtual vector bundles, but the
notion of Thom spectrum does. We recall in brief the construction of Thom
spectrum, the Thom isomorphism, and orientation theory for $\Einfty$ ring
spectra following \cite{ABGHR1}. Many of these ideas originated in
\cite{MayEinfty}. 

The space of units $\GL(\R)$ of an $\Einfty$ ring spectrum $\R$ is an $\Einfty$ space.
Consequently, for any space $\mr{B}$, the set of homotopy classes of pointed
maps $[\mr{B}_+, \BGL(\R)]$ forms an abelian group. 
 
For any zero dimensional virtual vector bundle  $\upxi$, the composite 
\[\begin{tikzcd} \Bmr_{\upxi} \rar["f_{\upxi}"]& \BO \rar["\Jmr"] & \BGL(\Sp)
\rar["\Bmr(\iota_{\R})"] & \BGL(\R), \end{tikzcd}\] 
 classifies a principal $\GL(\R)$-bundle $\Pmr(\upxi, \R)$
which is a right $\GL(\R)$-space where $f_{\upxi}$ is the
classifying map, $\mr{J}$ is the $\mr{J}$-homomorphism and $\iota_\R$ is the
unit map of $\mr{R}$. The $\R$-Thom spectrum of $\upxi$ is defined
as the derived smash product
\[ \M(\upxi, \R) \coloneqq \Pmr(\upxi, \R)_+ \underset{ \GL(\R)_+}{\sm} \R. \]
The above construction of the $\R$-Thom spectrum extends to virtual vector
bundles of all dimensions.
\begin{notation} 
For a virtual vector bundle $\upxi$ of dimension $d$, let $\upxi_0 := \upxi -
\upepsilon^d$ denote the corresponding zero-dimensional bundle. We declare  \[
\M(\upxi, \R) := \Sigma^{d}\M(\upxi_0, \R).\] 
We use $\M(\upxi)$ as a shorthand for $\M(\upxi, \Sp)$. 
\end{notation}
There is a natural weak equivalence
\[ 
\begin{tikzcd}
 \M(\upxi) \sma \R \rar["\simeq"] &\M(\upxi, \R).
\end{tikzcd} \]
which determines the homotopy type of $\M(\upxi, \R)$. 
\begin{notation}  
For all $k \in \ZZ$, let $\CP^{n+k}_k$  denote the Thom spectrum 
\[ \CP^{n+k}_k := \M\!\left( k\upgamma^n\right).\] 
This extends \Cref{thomCP} to all integers, provided that we consider $\CP_a^b$ to be a spectrum.
\end{notation}
An \emph{$\R$-orientation} of $\upxi$ is a choice of trivialization of
$\Pmr(\upxi_0,\R)$, or equivalently a choice of null-homotopy of the classifying map 
\[ 
\begin{tikzcd}
\Bmr(\iota_\R) \circ \Jmr \circ f_{\upxi_0} \colon \Bmr_{\upxi} \rar & \BGL(\R).
\end{tikzcd}\] 
Such a null-homotopy leads to an equivalence of $\R$-modules
\begin{equation} \label[equation-b]{thom-iso-i}
\M(\upxi) \sm \R \simeq \Sigma^d \M(\upxi_0,\R) \simeq \Sigma^d\Bmr_{\upxi+} \sm \R,
\end{equation}
 which we refer to as the \emph{$\R$-Thom isomorphism}.

Dually, an $\R$-orientation for $\upxi$ leads to an equivalence of $\R^{\Bmr_{\upxi+}}$-modules
\[ \R^{\Bmr_{\upxi+}} \simeq \Sigma^d\R^{\M(\upxi)}\] 
which can
be regarded as a cohomological version of the $\R$-Thom
isomorphism. On $\pi_0$, the above isomorphism sends a map $f\colon
\suspinfty\Bmr_{\upxi+} \to \R$ to the composite
\begin{equation} \label{thom-iso-ii}
\begin{tikzcd}
\M(\upxi) \ar[rr,"\mathbbm{1}_{\M(\upxi)} \sma \iota_\R"] && \M(\upxi) \sm\R \rar["\simeq"] & \Sigma^d\Bmr_{\upxi+} \sma \R \rar["f \sm\mathbbm{1}_\R"] &[5pt] \Sigma^d \R \sm \R \rar["\mu_{\R}"] & \Sigma^d\R.
\end{tikzcd}
\end{equation}
When $f\colon \suspinfty\Bmr_+\to \SS\to \R$ is the composite of the collapse
map with the unit of $\R$, we refer to the corresponding element of
$\R^d(\M(\upxi))$ as an \emph{$\R$-Thom class}. It is classical fact that an
$\R$-orientation is equivalent to a choice of an $\R$-Thom class. 
\begin{rmk} 
\label{thom-skeleton} 
If $\upxi$ is $\R$-orientable, then its restriction $\upxi^{(k)}$ to the $k$-th skeleton $\Bmr^{(k)}_\upxi$ is also $\R$-orientable and the Thom
spectrum $\Mmr(\upxi^{(k)})$ is the $(k + \dim \upxi)$-th skeleton of
$\Mmr(\upxi)$. Thus, any $\R$-Thom isomorphism $\upomega\colon \R \sma
\Mmr(\upxi) \overset{\simeq}{\longrightarrow} \R \sma \Sigma^d \Bmr_+$ preserves
the Atiyah-Hirzebruch filtration
 \[ \upomega^{(k)}\colon \R \sma \Mmr(\upxi^{(k)}) \overset{\simeq}{\longrightarrow} \R \sma \Sigma^d \Bmr_+^{(k)}. \]
\ \end{rmk}
\begin{defn} \label{defn:orientorder}
For an $\Einfty$ ring spectrum $\R$, the \emph{$\R$-orientation order}
$\Theta(\upxi, \R)$ of a virtual vector bundle $\upxi$ with basespace
$\mr{B}_{\upxi}$ is defined as the order of the class $[\Bmr(\iota) \circ \Jmr
\circ f_{\upxi_0}] \in [\mr{B}_{\upxi+}, \BGL(\R)].$ 
\end{defn}
\begin{rmk}  \label{rmk:geointerpret}
If $\Theta(\upxi, \R) =n$, then $n$ is the smallest positive integer for which
$n\upxi$ is $\R$-orientable. In particular,
$\upxi$ is $\R$-orientable when  $\Theta( \upxi, \R) =1$.
\end{rmk}

\begin{ex} 
Since $\FF_2^{\times}$ is the trivial group, every vector bundle is
$\HF_2$-orientable. However for $p>2$, not every bundle is  $\HF_p$-orientable.
For example, $\HF_p$-orientation order of the tautological line bundle over
$\mathbb{RP}^\infty$ is  $2$ (as its $\HZ$-orientation order is $2$).
\end{ex}
If a virtual vector bundle $\upxi$ is $\R$-orientable and there is a ring map
$\R\to\mr{T}$, then $\upxi$  is also $\mr{T}$-orientable. In particular, $\order(\upxi, \mr{T})$ divides $\order(\upxi, \mr{R})$.

\begin{rmk} 
\label{extendorientation} 
If $\R$ is a $p$-local ring spectrum and $\upxi$ is an $\HZ$-orientable virtual
vector bundle over a compact base then $\order(\upxi, \R)$ is a power of $p$.
This is because any $p$-local ring spectrum admits a ring map $\SS_{p} \to \R$
and $\order(\upxi, \SS_{p} )$ is a power of $p$  as the higher homotopy groups
of $\GL(\SS_{p})$ are $p$-torsion.  
\end{rmk}
\begin{ex} 
\label{complex-James-periodicity-formula} 
The $\SS$-orientation order of the tautological line bundle $\upgamma^{n}$ over
$\CP^n$ is known for all $n \in \NN$ due to work of  Atiyah and Todd
\cite{AtiyahTodd} (upper bound) and  Adams and Walker \cite{AdamsWalker} (lower
bound). It is given by the formula 
\[
\nu_p(\order( \upgamma^n, \SS)) =
\begin{cases}
\max\left\{r+\nu_p(r) \hskip0.3em : \hskip0.3em 1\leq r\leq\left\lfloor\frac{n}{p-1}\right\rfloor \right\}& \text{if $p\leq n+1$}\\
0 & \text{if $p>n+1$,}
\end{cases}\]
where $\nu_p$ is the $p$-adic valuation. 
\end{ex}

\subsection{Complex $n$-orientation and the splitting principle}
A \emph{complex orientation} of a  $\Einfty$-ring spectrum $\R$ is a choice of
an $\R$-Thom class for the tautological complex line bundle $\upgamma$, i.e. a map
\[ u_{\R}\colon \M(\upgamma-\upepsilon^2) \simeq \susp^{-2}\CPinfty\to \R \] 
such that the diagram 
\[\begin{tikzcd}
\Sp\dar \rar["\iota_{\R}"] & \R\\
 \susp^{-2}\CPinfty\urar["u_{\R}"', end anchor = -140]
\end{tikzcd}\]
commutes up to homotopy. One may generalize this definition to the notion of a
complex $n$-orientation as follows. 
\begin{defn}
\label{defn:classical-complex-orientation}
A \emph{complex $n$-orientation} of an $\Einfty$-ring spectrum $\R$ is a map
 \[ u_{\R}\colon \M(n(\upgamma -\upepsilon^2)) \simeq \susp^{-2n}\CPinfty_n\to \R \]
such that the diagram 
\[\begin{tikzcd}
\Sp\dar \rar["\iota_{\R}"] & \R\\
\susp^{-2n}\CPinfty_n\urar["u_{\R}"', end anchor = -140]
\end{tikzcd}\]
commutes up to homotopy.
\end{defn}

Quillen proved that complex orientations of a ring spectrum $\R$ are in
bijection with homotopy ring maps $u_1\colon \MU\to \R$ (see \cite[Lemma
4.1.13]{Ravenel} for a proof). Stated differently, Quillen proved that if $\order(\upgamma, \R) =1$ then $\order(\upxi, \R) =1$
for any complex vector bundle $\upxi$.

Let $\MUn$ be the Thom spectrum of the multiplication by $n$ map
$\varphi_n\colon \BU \to \BU$. Because $\varphi_n$ is an $\Einfty$-map (with
respect to the additive infinite loop structure on $\BU$), $\MUn$ is
an $\Einfty$-ring spectrum. We make the following generalization of Quillen's
work. 
\begin{thm}
\label{htpy-ring-complex-n-orientation}
Let $\R$ be an $\Einfty$-ring spectrum. There is a one-to-one correspondence between complex $n$-orientations of $\R$ and 
homotopy classes of unital maps $u_n\colon \MUn\to \R$.
\end{thm}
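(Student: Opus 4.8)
The plan is to follow Quillen's proof of the case $n=1$ (see \cite[Part II, Lemma~4.6]{Adams} and \cite[Lemma~4.1.13]{Ravenel}), replacing the splitting principle for $\R$-cohomology by a theorem of Segal \cite{Segal}. The map $\varphi_n\colon\BU\to\BU$ is a map of $\Einfty$-spaces for the additive structure, so $\BU\xrightarrow{\varphi_n}\BU\xrightarrow{\Jmr}\BGL(\Sp)$ is an $\Einfty$-map and $\MUn=\M(\varphi_n)$ is an $\Einfty$-ring by \cite{ABGHR1}. Restricting $\varphi_n$ along the inclusion $\CPinfty=\mr{B}\U(1)\hookrightarrow\BU$ of line bundles classifies the virtual bundle $n(\upgamma-\upepsilon^2)$, so functoriality of the Thom-spectrum construction gives a map of spectra $j_n\colon\susp^{-2n}\CPinfty_n=\M(n(\upgamma-\upepsilon^2))\to\MUn$ compatible with the units. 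Precomposition with $j_n$ therefore carries a unital map $\MUn\to\R$ to a complex $n$-orientation of $\R$, and the theorem asserts that this assignment is a bijection on homotopy classes; I would prove this by constructing an explicit inverse and verifying injectivity, both through the same mechanism.

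For the inverse, begin with a complex $n$-orientation $u_\R$, viewed as an $\R$-Thom class for $n$ times the tautological virtual line bundle over $\CPinfty$. Pulling $u_\R$ back along the classifying map of any complex line bundle $L\to Z$ produces an $\R$-Thom class for $nL$, and taking external products of such classes---using only that $\R$ is commutative up to coherent homotopy, which is the first use of the $\Einfty$-hypothesis---gives $\R$-Thom classes for $n(L_1\oplus\cdots\oplus L_m)$ over products, coherently for the symmetric groups. Thus $u_\R$ determines a multiplicative, $\Sigma_\bullet$-coherent system of $\R$-Thom classes on the $\Einfty$-monoid $\coprod_m\mr{B}(\U(1)\wr\Sigma_m)$ of sums of line bundles. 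The essential remaining step is to extend this system across the inclusion $\coprod_m\mr{B}(\U(1)\wr\Sigma_m)\hookrightarrow\coprod_m\mr{B}\U(m)$ and then to the group completion $\BU$, so as to obtain a multiplicative $\R$-Thom class for $\varphi_n$, which amounts to a unital map $\MUn\to\R$.

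This extension is the heart of the argument, and it is where Segal's theorem \cite{Segal} enters: the $\Einfty$-monoid $\coprod_m\mr{B}\U(m)$ has group completion $\BU$, and it is controlled by its line-bundle stratum $\mr{B}\U(1)=\CPinfty$ in the homotopy-coherent sense needed to transport the Thom-class data above across the bar construction computing the group completion. Concretely, I would check (i) that the Thom classes built from $u_\R$ respect the simplicial structure of Segal's $\Gamma$-space and hence assemble into a unital map $\MUn\to\R$, and (ii) that the same generation statement forces a unital map out of $\MUn$ to be determined, up to homotopy, by the complex $n$-orientation it induces, so that restriction along $j_n$ is injective on homotopy classes. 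Granting (i) and (ii), the map produced from $u_\R$ restricts along $j_n$ to $u_\R$ essentially by construction, and the two assignments are mutually inverse.

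The main obstacle is precisely this passage from line bundles to all of $\BU$. In the classical $n=1$ argument one first notes that a complex orientation makes $\R$ itself complex orientable, and then invokes the splitting principle $\R^*(\mr{B}\U(m))\hookrightarrow\R^*(\mr{B}\U(1)^m)$---via a Leray--Hirsch argument on flag bundles, whose fibres are then $\R$-oriented---to pin down compatible Thom classes $t_m$. For $n>1$ this is unavailable: $\R$ need only be complex $n$-orientable, individual line bundles and flag bundles need not be $\R$-oriented, and the Becker--Gottlieb transfer for $\U(1)^m\subset\U(m)$ yields the desired injectivity only after inverting $m!$. So the extension cannot be carried out by a bare injectivity statement and must instead proceed functorially through Segal's group-completion theorem; making that functoriality precise---i.e. establishing the coherence in (i) rather than a mere degreewise extension---is the technical crux, and is why the $\Einfty$-structure on $\R$ is assumed.
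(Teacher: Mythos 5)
Your instinct is right on the key points: Quillen's splitting-principle argument breaks down for $n>1$ (indeed, as you note, the Becker--Gottlieb transfer for $\U(1)^m\subset\U(m)$ only helps after inverting $m!$), and Segal's paper together with the $\Einfty$-hypothesis on $\R$ is the correct replacement. But the route you sketch is not the paper's, and it has a genuine gap: the step you label~(i), coherence of the hand-built $\Sigma_\bullet$-equivariant Thom classes across the bar construction, is left as a black box, which you yourself flag as the ``technical crux'' without indicating how to close it.

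The paper sidesteps that crux entirely by never constructing a coherent system of Thom classes in $\R$-cohomology. In the ABGHR framework a complex $n$-orientation of $\R$ is \emph{by definition} a null-homotopy of the composite
\[
\CPinfty\xrightarrow{\ f_{\upgamma}\ }\BU\xrightarrow{\ \varphi_n\ }\BU\xrightarrow{\ \mr{J}_\CC\ }\BGL(\Sp)\xrightarrow{\ \BGL(\iota_\R)\ }\BGL(\R),
\]
and a unital map $\MUn\to\R$ is precisely a null-homotopy of the same string started at $\BU$. Since $\BGL(\R)$ is an infinite loop space and everything past $f_{\upgamma}$ is an $\Einfty$-map, the given null-homotopy over $\CPinfty$ extends canonically to a null-homotopy of the induced $\Einfty$-map out of the free grouplike $\Einfty$-space $\Qmr\CPinfty$. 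Segal's Proposition~2.1 --- which is the concrete splitting result that $\BU$ is a retract of $\Qmr\CPinfty$, not the general group-completion theorem you gesture at --- supplies a section $\alpha\colon\BU\to\Qmr\CPinfty$ of the extension $\tilde{f}_{\upgamma}$, and precomposing with $\alpha$ yields the desired null-homotopy on $\BU$. All of the $\Sigma_\bullet$-coherence you propose to verify degreewise on $\coprod_m\mr{B}\U(m)$ is absorbed into the formal fact that the target is grouplike $\Einfty$ and the maps are $\Einfty$-maps; no $\Gamma$-space bookkeeping is required, which is why the argument fits in a few lines.
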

\begin{proof}
A complex $n$-orientation of $\R$ is a null-homotopy of
the composite 
\[ 
\begin{tikzcd}
\CPinfty \rar["f_{\upgamma}"] & \BU \rar["\varphi_n"] & \BU \rar["\mr{J}_{\CC}"] & \BGL(\SS) \ar[rr,"\BGL(\iota_{\R})"] && \BGL(\R)
\end{tikzcd}
\]
where $\mr{J}_\CC$ is the complex $\mr{J}$-homomorphism.  Using the fact that $\BU$, $\BGL(\SS)$ and  $\BGL(\R)$ are
$\Einfty$-spaces and $\varphi_n$, $\mr{J}_\CC$ and $\BGL(\iota_{\R})$ are
$\Einfty$-maps, we can extend $f_{\upgamma}$ to $\tilde{f}_{\upgamma}$
\[ 
\begin{tikzcd}
\CPinfty \dar[hook, "\iota"]  \rar["f_{\upgamma}"] & \BU \ar[rrr, "\BGL(\iota_{\R})\circ \mr{J}_\CC \circ \varphi_n "] &&& \BGL(\R) \\
\Qmr\CPinfty \ar[ur, "\tilde{f}_{\upgamma}"']
\end{tikzcd}
\]
and the null-homotopy of $\BGL(\iota_{\R})\circ \Jmr_\CC \circ \phi_n \circ
f_{\upgamma}$ to a null-homotopy of \[ \BGL(\iota_{\R})\circ \Jmr_\CC \circ
\varphi_n \circ \tilde{f}_{\upgamma} \colon \Qmr\CPinfty \to \BGL(\R).\] By
\cite[Proposition 2.1]{Segal} there exists a map $\alpha\colon \BU \to \Qmr\CPinfty$  such
that $\tilde{f}_{\upgamma} \circ \alpha \simeq \mathbbm{1}_\BU$. Therefore, 
\[ \BGL(\iota_{\R})\circ \Jmr_\CC \circ \phi_n \simeq \BGL(\iota_{\R})\circ
\Jmr_\CC \circ \phi_n \circ \tilde{f}_{\upgamma} \circ \alpha \simeq \ast.
\] 
 We let
$u_n\colon\MUn \to \R$ to be the corresponding
$\R$-Thom class. 
\end{proof}
\begin{rmk}
\label{rmk:htpy-ring-complex-n-orientation}
\Cref{htpy-ring-complex-n-orientation} restricted to $n=1$ is significantly
weaker when compared to Quillen's result (see \cite[Lemma 4.1.13]{Ravenel}).
Quillen only requires that the ring $\R$ to be homotopy commutative and he
proves that the $\R$-Thom class $u_1$ is muliplicative. Quillen's argument when
$n=1$ relies on the computation of $\R^*(\BU(d)_+)$ in terms of Chern classes to
gain control over the multiplicative structure. It is unclear how to generalize such direct methods to $n>1$. 
\end{rmk}
\begin{rmk}
\label{phi-o-f-classifies-nV}
If $f_{\upxi}\colon \mr{X} \to \BU$ is the classifying map of a virtual complex vector bundle
$\upxi$, then $\varphi_n \circ f_{\upxi}$ classifies the bundle $n \upxi$.
\end{rmk}
\begin{lem}[The splitting principle]
\label{splitting-principal}
If $\R$ is an $\Einfty$-ring spectrum then
$\orderB{\R, \upxi}$  divides  $\orderB{\R,\upgamma}$
for any virtual complex vector bundle $\upxi$.
\end{lem}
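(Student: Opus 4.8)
The plan is to deduce the lemma from the proof of \Cref{htpy-ring-complex-n-orientation}. It suffices to establish the following assertion: if $m$ is a positive integer for which $m\upgamma$ is $\R$-orientable, then $m\upxi$ is $\R$-orientable for every virtual complex vector bundle $\upxi$. Granting this, fix $\upxi$ and recall that $\Theta(\R,\upxi)$ is the order of the class $[\BGL(\iota_\R)\circ \mr{J}_{\CC}\circ f_{\upxi_0}]$ in the abelian group $[\Bmr_{\upxi+},\BGL(\R)]$; since $\BGL(\iota_\R)\circ\mr{J}_{\CC}$ is a map of $\Einfty$-spaces and $\varphi_m$ is multiplication by $m$ in the additive $\Einfty$-structure, the bundle $m\upxi$ is $\R$-orientable precisely when $\Theta(\R,\upxi)$ divides $m$. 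Applying the assertion with $m=\Theta(\R,\upgamma)$ then yields $\Theta(\R,\upxi)\mid\Theta(\R,\upgamma)$; if $\Theta(\R,\upgamma)=\infty$ there is nothing to prove.

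To prove the assertion, I would first unwind the hypothesis: an $\R$-orientation of $m\upgamma$ is the same datum as an $\R$-Thom class for the zero-dimensional bundle $m(\upgamma-\upepsilon^2)$, i.e.\ a complex $m$-orientation of $\R$ in the sense of \Cref{defn:classical-complex-orientation}. The crucial input --- already contained in the proof of \Cref{htpy-ring-complex-n-orientation} --- is that a complex $m$-orientation of $\R$ forces the composite
\[
\BGL(\iota_\R)\circ \mr{J}_{\CC}\circ \varphi_m\colon \BU \longrightarrow \BGL(\R)
\]
to be null-homotopic: one uses the $\Einfty$-structures to extend $f_{\upgamma}\colon\CPinfty\to\BU$ along $\iota\colon\CPinfty\to\Qmr\CPinfty$ and to extend the given null-homotopy over $\Qmr\CPinfty$, and then precomposes with Segal's splitting $\alpha\colon\BU\to\Qmr\CPinfty$, which satisfies $\td{f}_{\upgamma}\circ\alpha\simeq\mathbbm{1}_{\BU}$. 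I would simply re-invoke this.

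Finally, let $\upxi$ be a virtual complex vector bundle of dimension $d$ over $\Bmr_\upxi$, classified by $f_{\upxi_0}\colon\Bmr_\upxi\to\BU$, where $\upxi_0=\upxi-\upepsilon^d$. By \Cref{phi-o-f-classifies-nV}, the composite $\varphi_m\circ f_{\upxi_0}$ classifies the bundle $m\upxi_0=(m\upxi)_0$. Hence the classifying map of the principal $\GL(\R)$-bundle $\Pmr((m\upxi)_0,\R)$, namely $\BGL(\iota_\R)\circ\mr{J}_{\CC}\circ\varphi_m\circ f_{\upxi_0}$, is the composite of $f_{\upxi_0}$ with the null-homotopic map displayed above, hence itself null-homotopic; that is, $m\upxi$ is $\R$-orientable, as required.

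The heavy lifting lives entirely inside \Cref{htpy-ring-complex-n-orientation}: the genuine difficulty is promoting a complex $m$-orientation of $\R$ (a statement about $\CPinfty$) to a null-homotopy of a map out of all of $\BU$, and this is exactly where the $\Einfty$-hypothesis on $\R$ --- together with $\varphi_m$ and $\mr{J}_{\CC}$ being $\Einfty$-maps --- and Segal's splitting theorem are used; this is the step I expect to be the main obstacle if one were to prove the lemma from scratch. Everything else is bookkeeping: tracking the passage between a bundle $\upxi$, its zero-dimensional reduction $\upxi_0$, and the $m$-fold direct-sum operation $\varphi_m$, and observing that, by definition of $\Theta$ as the order of a group element, ``$m\upxi$ is $\R$-orientable'' holds for exactly the multiples of $\Theta(\R,\upxi)$, which is what converts the orientability assertion into the claimed divisibility.
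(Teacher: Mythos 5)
Your proof is correct and follows essentially the same route as the paper: reduce to the zero-dimensional case, use \Cref{phi-o-f-classifies-nV} to express the classifying map of $m\upxi_0$ as $\varphi_m\circ f_{\upxi_0}$, and then invoke the key null-homotopy of $\BGL(\iota_\R)\circ\mr{J}_\CC\circ\varphi_m$ on $\BU$ established in the proof of \Cref{htpy-ring-complex-n-orientation}. Your additional remarks (the explicit reduction to ``$m\upxi$ orientable iff $\Theta(\R,\upxi)\mid m$'' via the group structure, and the handling of the infinite-order case) are fine but amount to spelling out what the paper leaves implicit via \Cref{defn:orientorder} and \Cref{rmk:geointerpret}.
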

\begin{proof}
Without loss of generality, we may assume $\dim \upxi =0$.  By \Cref{phi-o-f-classifies-nV}, the classifying map of $\upxi^{\oplus n}$
factors through $\varphi_n$ 
\[ 
\begin{tikzcd}
f_{n \upxi}\colon \Bmr_{\upxi+} \rar["f_{\upxi}"] & \BU \rar["\varphi_n"] & \BU. 
\end{tikzcd}
\]
 In the proof of \Cref{htpy-ring-complex-n-orientation}, we show that if $n\upgamma$ is $\R$-orientable then the composite 
 \[ 
\begin{tikzcd}
\BU \ar[rrr, "\BGL(\iota_{\R})\circ \mr{J}_\CC \circ \varphi_n "] &&& \BGL(\R) 
\end{tikzcd}
\]
is null-homotopic. Therefore the composite $\BGL(\iota_{\R})\circ \mr{J}_\CC \circ f_{n\upxi}$ is null as well
\[ 
\BGL(\iota_{\R})\circ \mr{J}_\CC \circ f_{n\upxi}  = (\BGL(\iota_{\R})\circ \mr{J}_\CC \circ \varphi_n) \circ f_{\upxi}  \simeq (\ast) \circ f_{\upxi} \simeq \ast.
\]
Thus, when $\orderB{\R,\upgamma} = n$ then $n \upxi$ is $\R$-orientable,  and therefore, $\orderB{\R,\upxi}$ divides $n$.  
\end{proof} 

\section{The \texorpdfstring{$\Cp$}{Cp}-action on the Morava \texorpdfstring{$\K$}{K}-theory of \texorpdfstring{$\CP^{\infty}$}{CP\infty}  }
\label{sec:Cp-action}
Let $\Gamma$ be a formal group of height $n = (p-1)k$ for some positive $k$ such
that $\mr{C}_p$ is a subgroup of  $\Aut(\Gamma)$ (see \Cref{heightrestrict}).
The goal of this section is to study the action of $\mr{C}_p$ on $\K_{\Gamma}^{\
*} \CP^\infty$, where $\K_{\Gamma} = \E_{\Gamma}/\mathfrak{m}$ is the associated
Morava $\K$-theory.  We do so by  relating the action of  $\Cp$ with the action
of an element $\Pmr_k$ of the Steenrod algebra on the homology of $\CP^\infty$
(see \Cref{compare-chibar-Pk}).

The ring spectrum $\EO_{\Gamma} := \E_{\Gamma}^{\mr{h}\Cp}$ 
depends not only on the formal group $\Gamma$ but also on the choice of
embedding $\iota\colon \Cp \hookrightarrow \Aut(\Gamma)$ up to
a conjugation. We emphasize that our results apply
to all choices of $\EO_{\Gamma}$ because
\Cref{tk-valuation}  holds for all pairs $(\Gamma,
\iota)$. We begin by recalling some facts from Lubin-Tate theory needed in this
paper.

Let $\FFtwee$ denote the separable closure of $\FF$ and let $\Gammatwee$ be the
base change of $\Gamma$ to $\FFtwee$. The endomorphism ring of $\Gammatwee$
 is a noncommutative valuation ring which can be explicitly described as 
\begin{equation}
\label{End-Gammatwee-identification}
\End(\Gammatwee)\cong \WW(\FFtwee)\langle \Tmr\rangle /(\Tmr a - \phi(a)\Tmr, \Tmr^n - p{\sf u}),
\end{equation}
where $\Tmr$ is a uniformizer, $\phi$ is the Frobenius and
${\sf u}\in \WW(\FF_{p^n})^{\times}$ is a unit (see \cite{LubinTate-FormalModuli}). Note that $v(\Tmr^n) = v(p{\sf
u}) = 1$. Any element $e \in \End(\Gammatwee)$ can be expressed as 
\begin{equation} \label{ps-expansion}
 e = \sum_{i=0}^\infty a_i \Tmr^i
\end{equation}
where $a_i$ are Teichm\"uller lifts of $\FFtwee$ in $\WW(\FFtwee)$
and the valuation is $v(e)=j/n$, where $j$ is the smallest integer such that $a_j\neq 0$.

\begin{notation} \label{notation-for-rest}     
We will use the following notations and conventions for the remainder of the
paper.
\begin{itemize}
\item Fix a prime $p$ and a positive  integer $k$. Let  $n = k (p-1)$.

\item 
    Fix a perfect field $\FF$ of characteristic $p$ and a formal group
    $\Gamma$ of height $n$ over $\FF$ such that $\Cp$ is a subgroup of $\Aut(\Gamma)$. Let $\E_{\Gamma}$ denote the associated
    Lubin-Tate theory. By Lubin-Tate theory \cite{LubinTate-FormalModuli} 
    \[ \pi_*\E_\Gamma \cong \WW(\FF)\llbracket u_1, \dots, u_{n-1}\rrbracket[u^{\pm}], \]
    where $\WW(\FF)$ is the ring of Witt vectors over $\FF$.
    The elements $u_1$, \ldots, $u_{n-1}$ are elements of $\pi_0\E_{\Gamma}$ and $u\in\pi_{-2}\E_{\Gamma}$.

\item
    Let $\mfrak$ denote the maximal ideal $(p, u_1,\dots, u_{n-1})$ of
    $\pi_*\E_{\Gamma}$ and let $\K_{\Gamma}$ denote the corresponding height $n$
    Morava $\K$-theory so that $\pi_*\K_{\Gamma} \simeq \pi_*(\E_{\Gamma})/\mfrak.$
\item 
    Fix an embedding $\iota\colon \Cp\to \Aut(\Gamma)$  and let
    $\EO_{\Gamma} \coloneqq \E_{\Gamma}^{\hCp}.$
\item 
    Abbreviate $\E_{\Gamma}$ by $\E$, $\K_{\Gamma}$ by $\K$, and $\EO_{\Gamma}$
    by $\EO$, leaving the dependence on $\Gamma$ and $\iota$ implicit.
\item 
    Let $\Ects_*(-) \coloneqq \pi_* (\Lmr_{\K}(\E \sm -))$, $\EEO_*(-) \coloneqq
    \pi_*(\E \sma_{\EO} - )$, and  $\KEO_*(-) = \pi_*(\K\sm_{\EO}-)$.
\item 
    Fix a $p$-typical coordinate on $\Gamma$ and let $\pi_{\E}\colon \BP \to \E$
    be the associated map. Let $\pi_\K\colon \BP \to \K$ denote the composite of
    $\pi_{\E}$ with the reduction map from $\E \to \K$.

\item Let $\pi_{\Fp}\colon \BP \to \HFp$ be the standard reduction map.
\end{itemize}
\end{notation}

\bigskip

The map $\pi_{\E}\colon \BP\to\E$ induces a map $\BP\sm\BP\to\E\sm \E \to \LK(\E\sm \E)$.
There is a homotopy coequalizer map $\LK(\E\sm\E)\to\E\sm_{\EO}\E$.
These maps fit into a diagram
\begin{equation}
\label[diagram]{diagram:BP_*BP-to-maps}
\begin{tikzcd}[
 row sep=small,
 cong/.style={"\cong" {sloped, description, yshift=0pt,#1}, phantom}
]
\BP_*\BP \rar["\rho_{\E}"] & \Ects_*\E\dar[cong] \rar & \EEO_*\E\dar[cong] \\
              & \MapsCts(\Aut(\Gamma), \E_*) \rar["\Res^{\Cp}"]& \Map(\Cp, \E_*).
\end{tikzcd}
\end{equation}
The vertical isomorphisms are by Galois theory \cite[Theorem 5.4.4 and
Definition 4.1.3]{Rognes-galois}. The map $\Res^{\Cp}$ is restriction along the
inclusion map $\iota\colon \Cp\to\Aut(\Gamma)$. 

\begin{notation}
\label{notation:theta(g)}
Given $\uptheta\in\BP_*\BP$ and ${\sf g}\in\Aut(\Gamma)$,
the map
\[
\begin{tikzcd}
\rho_{\E}\colon \BP_*\BP \rar &  \Ects_*\E=\Map^c(\Aut(\Gamma),\E_*)
\end{tikzcd}
\]
allows us to interpret the element $\uptheta({\sf g})\coloneqq\rho_{\E}(\uptheta)({\sf g})\in \E_*$.
Let $\overline{\uptheta}({\sf g})$ denote the image of $\uptheta({\sf g})$ under the quotient map
\[ \E_* \twoheadrightarrow \E_*/ \mfrak \cong \K_*.\]
\end{notation}

There is an isomorphism $\BP_*\BP\cong \BP_*[t_1,t_2,\ldots]$. 
\begin{lem}
\label{tk-valuation}
Let $\upzeta\in \Aut(\Gamma)$ be an element of order $p$.
Then $\tbar_i(\upzeta)=0$ for $i<k$ and $\tbar_k(\upzeta)$ is a unit.
\end{lem}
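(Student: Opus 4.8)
The plan is to translate the statement about $\bar t_i(\zeta)$ into the language of the endomorphism ring $\End(\Gammatwee)$ and its valuation, which was set up in \eqref{End-Gammatwee-identification}--\eqref{ps-expansion}. Recall that an element $\zeta \in \Aut(\Gamma) \subseteq \End(\Gammatwee)$ of order $p$ can be written as $\zeta = \sum_{i \geq 0} a_i \Tmr^i$ with $a_i$ Teichmüller lifts, and its valuation is $v(\zeta) = j/n$ where $j$ is the first index with $a_j \neq 0$. The first thing I would establish is that the relevant valuation of $\zeta - 1$ (equivalently, of $\zeta$ as a "nonidentity" element, after subtracting the identity endomorphism) is exactly $1/k = (p-1)/n$. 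This is the classical computation, essentially due to Lubin--Tate / local class field theory: an order-$p$ element of $\mathcal{O}_D^{\times}$ in the division algebra $D$ of invariant $1/n$ over $\QQ_p$, when $p - 1 \mid n$, must satisfy $v(\zeta - 1) = 1/(p-1) \cdot \frac{n}{n}$... more precisely $v(\zeta-1) = \frac{1}{p-1}$ in the normalization where $v(p) = 1$, so in the $n$-normalized valuation $v(\Tmr) = 1/n$ it is $v(\zeta - 1) = \frac{n/(p-1)}{n} = k/n$. Wait — I need to be careful with normalizations; the key fact is that the ramification is tame exactly when $p-1 \mid n$, and the "distance from $1$" of the order-$p$ element is $k$ units of $\Tmr$, i.e. $\zeta \equiv 1 \pmod{\Tmr^k}$ but $\zeta \not\equiv 1 \pmod{\Tmr^{k+1}}$.

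Next I would connect this to the $\bar t_i$. The map $\rho_\E \colon \BP_*\BP \to \Ects_*\E \cong \MapsCts(\Aut(\Gamma), \E_*)$ sends $t_i$ to a function whose value at $\sf g \in \Aut(\Gamma)$ encodes, via Lubin--Tate theory, the coefficients of the power series expansion of the corresponding endomorphism in terms of the coordinate. After reduction mod $\mfrak$, the element $\bar t_i(\zeta) \in \K_*$ is (up to a unit coming from the grading element $u$) precisely the coefficient that detects the $i$-th "layer" of $\zeta$: concretely, under the identification of $\End(\Gammatwee) \otimes \mod \mfrak$-type information, $\bar t_i(\zeta)$ is a unit multiple of the leading coefficient $a_i$ if the lower $a_j$ ($j < i$) vanish appropriately, and the valuation statement $v(\zeta - 1) = k/n$ translates into: the functions $\bar t_1, \dots, \bar t_{k-1}$ vanish on $\zeta$ while $\bar t_k(\zeta) \neq 0$. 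The cleanest way to see this is probably to use that $\BP_*\BP \to \K_*[\text{something}]$ factors through the Honda-type formula where $t_i \mapsto$ (coordinate of the $p$-th power endomorphism or Frobenius-twisted endomorphism) and that mod $p$ (in Morava $\K$-theory) the relation $\Tmr^n = p{\sf u}$ forces $t_i$ for $i < k$ to see only "$p$-divisible" data. Alternatively, and perhaps more robustly, I would invoke the known description (e.g. from work on the action of the stabilizer group on $\E_*\CP^\infty$ or on $\K(n)_*\K(n)$) that identifies $\bar t_i$ with explicit elements and compute directly.

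The main obstacle I anticipate is pinning down the precise identification of $\bar t_k(\zeta)$ with the leading coefficient $a_k$ of $\zeta - 1$ and verifying it is a \emph{unit} in $\K_*$, not merely nonzero — this requires knowing that $a_k$ is a Teichmüller lift of a \emph{nonzero} element of $\FFtwee$, which follows from $v(\zeta - 1) = k/n$ exactly, and then that the map $\rho_\E$ composed with reduction carries this to a unit rather than to something in the augmentation ideal. One subtlety: the statement is about $\bar t_i(\zeta)$ itself, not $\bar t_i(\zeta) - \bar t_i(1)$, but since $t_i$ for $i \geq 1$ lie in the augmentation ideal of $\BP_*\BP$ over $\BP_*$ (they measure the difference between an endomorphism and the identity, roughly), we have $t_i(1) = 0$, so $\bar t_i(\zeta) = \bar t_i(\zeta) - \bar t_i(1)$ and the two formulations agree. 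I would therefore structure the proof as: (1) recall $t_i(\mathrm{id}) = 0$ and that $\rho_\E(t_i)(\sf g)$ detects the coefficients of $\sf g$; (2) compute $v(\zeta - 1) = k/n$ using tameness of the ramification when $p - 1 \mid n$; (3) conclude $\bar t_i(\zeta) = 0$ for $i < k$ by the valuation bound and $\bar t_k(\zeta)$ is a unit by reading off the leading Teichmüller coefficient. The genuinely delicate bookkeeping is matching the valuation conventions ($v(p) = 1$ versus $v(\Tmr) = 1/n$) across these three steps and making sure no off-by-one error creeps into the index $k$.
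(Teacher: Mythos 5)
Your proposal follows essentially the same route as the paper's proof: both pass to the power-series description $\zeta = 1 + \sum_{i>0} a_i\Tmr^i$ in $\End(\Gammatwee)$, compute $v(\zeta - 1) = \tfrac{1}{p-1} = v(\Tmr^k)$ from the fact that $\QQ_p(\zeta)/\QQ_p$ is totally ramified of degree $p-1$ with uniformizer $\zeta - 1$, and conclude by reading off $\tbar_i(\zeta)$ from the Teichm\"uller coefficients $a_i$. The paper handles the two bookkeeping points you flagged by (a) explicitly reducing to the separably closed case via the injection $\K_* \hookrightarrow \K_{\Gammatwee *}$ and the compatibility $\tbar_i(\alpha(\upzeta)) = f(\tbar_i(\upzeta))$, and (b) citing the precise formula $t_i(\tilde\upzeta) \equiv a_i u^{1-p^i} \pmod{\mathfrak m}$ from the literature rather than deriving it, which settles the identification of $\tbar_k(\zeta)$ with a unit.
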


\begin{proof}
\def\Ktwee{\widetilde{\Kmr}} 
Let $\FFtwee$ be the separable closure of $\FF$, 
$\Gammatwee$ be the base change of $\Gamma$ to $\FFtwee$ and $\Ktwee
:=\K_{\Gammatwee}$ be the Morava $\K$-theory associated to $\Gammatwee$.
The field extension $\FF \to \FFtwee$ induces the inclusion of groups 
\[ 
\begin{tikzcd}
\alpha \colon\Aut(\Gamma) \rar[hook] & \Aut(\Gammatwee)
\end{tikzcd}
\] 
as well as a map 
\[
\begin{tikzcd} 
f\colon \K_* \rar & \Ktwee_*
\end{tikzcd}
\]
on Morava $\K$-theories. Since $f$ is an injection and 
\[\tbar_i(\alpha(\upzeta))=f(\tbar_i(\upzeta)),\] 
it suffices show that $\tbar_i(\tilde{\upzeta})=0$ for $i<k$ and
$\tbar_k(\tilde{\upzeta})$ is a unit whenever
$\tilde{\upzeta}\in\Aut(\Gammatwee)$ is an element of order $p$. When expressed
as a power series (as in \eqref{ps-expansion})
\[ \tilde{\upzeta} =  1 + \sum_{i>0} a_i\Tmr^i,  \] 
and it follows  from  the properties of $t_i$ that 
\begin{equation}
\label{ai}
 t_i(\tilde{\upzeta}) \equiv a_i u^{1-p^i} \mod \mathfrak{m} 
\end{equation}
(see \cite[2-6]{K2Z} or \cite{Ravoddarf}[pg. 437]). Because $\Qp(\tilde{\upzeta})$ is a totally ramified
extension of $\Qp$ of degree $p-1$ with $\tilde{\upzeta}-1$ as a uniformizer,
\[ v(\tilde{\upzeta} -1) = \frac{1}{p-1} = v(\Tmr^{k}).\] 
Thus, $a_i = 0$ if $i < k$ in \eqref{ai} and a unit if $i= k$.
\end{proof}
\subsection{Filtering the $\Cp$-action on $\K_*\Xmr$}

For any spectrum $\Xmr$ the $\EEO_*\E$-coaction on $\E_*\Xmr$, given by the
composition
\begin{equation} \label{EEOaction}
\begin{tikzcd}
\Psi \colon \E_*\Xmr \rar[" \Psi_\E"] &
\Ects_*\E \otimes_{\E_*} \E_*\Xmr \rar["\Res^{\Cp}\otimes \id"] &[20pt]
\EEO_*\E \otimes_{\E_*} \E_*\Xmr,
\end{tikzcd}
\end{equation}
 leads to a contragradient action of $\Cp$ as $\EEO_*\E\cong \Map(\Cp ,\E_*)$. 
Explicitly, the action of ${\sf g} \in \Cp$ is given by the map 
\begin{equation} \label{eqn:action}
\begin{tikzcd}
\E_*\Xmr \rar["\Psi"] &
\EEO_*\E\otimes_{\E_*} \E_*\Xmr \rar["\ev_{{\sf g}}\otimes 1"] &
\E_*\Xmr.
\end{tikzcd}
\end{equation}
Note that $\EEO_*\E\cong \Map(\Cp,\E_*)$ is a quotient Hopf algebra of
$\Ects_*\E$ and dual to the Hopf algebra $\pi_*(\EOmod(\E,\E)) \cong \ECpsigma$ as
defined in \Cref{twist}.
\begin{rmk}
\label{twist}
The nontrivial action of $\Cp$ on $\E_*$ means  $ \pi_*(\EOmod(\E,\E))$  is isomorphic as a ring to the \emph{twisted} group ring
\[\ECpsigma\coloneqq \E_*\langle \upzeta \rangle/(\upzeta^p=1,  \upzeta \cdot e
= \upzeta(e) \cdot \upzeta),\] 
where $e \in \E_*$. Since the action of $\Cp$ on $\K_*$ is trivial,
$\ECpsigma/\mfrak$ is isomorphic to the untwisted group ring $\KCp$.
\end{rmk}
\begin{defn}
\label{defn:even}
A spectrum $\Xmr$ is even if $\Xmr$ is bounded below, $\HZ_{2i+1}\Xmr=0$ and
$\HZ_{2i}\Xmr$ is a finitely generated $\ZZ$-module for all integers $i$.
\end{defn}
When $\Xmr$ is an even spectrum, there is an isomorphism
 \[ \K_* \Xmr \cong \E_* \Xmr/\mfrak \] which makes $\K_*\Xmr$ into a
$\KCp$-module. 

Next, we use the Atiyah-Hirzebruch filtration to relate the
$\KCp$-module structure  on $\K_*\Xmr$ to the $\mathcal{B}(k)_*$ comodule
structure on $\H_*\Xmr$ for an even spectrum $\Xmr$, where $\mathcal{B}(k)_*$ is a
quotient Hopf algebra of the even dual Steenrod algebra. The Atiyah-Hirzebruch
filtration on $\R_*\Xmr$ is an increasing filtration induced by the skeletal
filtration of $\Xmr$
\[\Fil_d\R_*\Xmr :=\R_*\Xmr^{(d)}\subseteq \R_*\Xmr.\]
The associated graded
\[ \gr \R_*\Xmr \coloneqq \bigoplus \frac{\Fil_{d}\R_*\Xmr}{\Fil_{d-1}\R_*\Xmr}\]
is bigraded as the second grading is induced by the internal grading of $\R_*\Xmr$.

When $\R$ is complex orientable (e.g. $\BP$, $\K$ and $\E$) and $\Xmr$ is an
even spectrum (e.g $\CP^{n+k}_n$), the AHSS (Atiyah Hirzebruch spectral
sequence) for $\R_*\Xmr$ collapses on the $\E_2$-page. This collapse induces an
isomorphism
\[
\gr \R_*\Xmr \cong \R_*\otimes \H_* \Xmr.
\]
\begin{lem}
\label{lem:filtK}
Let $\Xmr$ be an even spectrum and let $\chi = \upzeta - 1 \in \KCp$.
Then
\[\chi_*\Fil_d\K_*\Xmr\subseteq \Fil_{d-2p^{k}+2}\K_*\Xmr.\]
\end{lem}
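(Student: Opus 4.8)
The plan is to reduce the statement to a concrete computation involving the element $\upzeta\in\Cp$ acting on $\K_*\Xmr$, where the key input is \Cref{tk-valuation}, which identifies exactly where the action ``starts'' in terms of the power series expansion of $\upzeta$. First I would use the collapse of the AHSS to identify $\gr\K_*\Xmr\cong\K_*\otimes\H_*\Xmr$, and track how $\chi_* = (\upzeta-1)_*$ interacts with the Atiyah--Hirzebruch filtration. Since the filtration is multiplicative for the $\K_*$-comodule (equivalently $\KCp$-module) structure coming from the skeletal filtration of $\Xmr$, it suffices to understand the effect of $\chi$ on a single filtration quotient, i.e.\ on the associated graded.

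The heart of the argument is to translate the action of $\chi$ on $\gr\K_*\Xmr$ into the language of the dual Steenrod algebra. Recall from \Cref{notation:theta(g)} and the surrounding discussion that for $\uptheta\in\BP_*\BP$ the value $\overline{\uptheta}(\upzeta)\in\K_*$ records the $\Cp$-action via the formula for the $\EEO_*\E$-coaction in \eqref{EEOaction}. By \Cref{tk-valuation}, $\tbar_i(\upzeta)=0$ for $i<k$ while $\tbar_k(\upzeta)$ is a unit; combined with \eqref{ai}, this pins down the leading term. Now the coaction on $\H_*\CPinfty$ (and hence on $\H_*\Xmr$ for the relevant even spectra $\Xmr=\CP^{n+k}_n$) is governed by the $t_i$'s, and the dual statement is that $\chi$ acts on $\gr\K_*\Xmr$ through the Milnor primitive / Steenrod operation $\Pmr_k$ dual to $\tbar_k$ (up to units), as promised in the section introduction via \Cref{compare-chibar-Pk}. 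The operation $\Pmr_k$ lowers homological degree by $2(p^k-1)$ on $\H_*\CPinfty$ with $\FF_p$-coefficients, since $|\tbar_k| = 2(p^k-1)$.

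Concretely, I would argue as follows. Write $\chi_* = (\upzeta-1)_*$ on $\K_*\Xmr$; on the associated graded, modulo lower filtration, $\chi_*$ is detected by the $\tbar_k(\upzeta)$-term of the coaction because all lower $\tbar_i(\upzeta)$ vanish by \Cref{tk-valuation}. Hence $\gr(\chi_*)\colon \gr_d\K_*\Xmr\to\gr_*\K_*\Xmr$ lands in degree $d - 2(p^k-1) = d - 2p^k+2$, which is exactly the claimed drop. To pass from the associated graded to the filtration itself: if $x\in\Fil_d\K_*\Xmr$, then $\chi_*x$ has associated-graded component in filtration $\le d - 2p^k+2$ by the above; iterating or using that $\chi_*$ is filtration-non-increasing in general (it comes from a map of spectra respecting skeleta only up to the coaction shift) gives $\chi_*x\in\Fil_{d-2p^k+2}\K_*\Xmr$.

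The main obstacle I anticipate is making precise the claim that ``$\chi$ acts through $\Pmr_k$ on the associated graded'' — i.e.\ controlling the error terms. The coaction formula produces $\overline{\uptheta}(\upzeta)$ for all $\uptheta$, and while the $\tbar_i$ with $i<k$ vanish on $\upzeta$, one must check that the higher terms ($\tbar_i(\upzeta)$ for $i>k$, and products thereof) either drop filtration by \emph{more} than $2p^k-2$ or are absorbed. This is where one uses that $v(\tilde\upzeta-1)=1/(p-1)=v(\Tmr^k)$ tightly, so that $\tbar_i(\upzeta)$ for $i>k$ contributes at higher Atiyah--Hirzebruch filtration after the $\mod\mfrak$ reduction — equivalently, the $\tbar_i$ for $i>k$ correspond to Steenrod operations lowering degree by $\ge 2p^k-2$ as well, which is still consistent with, and in fact implies, the stated containment $\chi_*\Fil_d\subseteq\Fil_{d-2p^k+2}$. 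So the inequality is in the ``safe'' direction, and the bookkeeping, while requiring care with the comodule structure on $\H_*\Xmr$ and the identification $\gr\K_*\Xmr\cong\K_*\otimes\H_*\Xmr$, should go through cleanly once \Cref{compare-chibar-Pk} is in hand.
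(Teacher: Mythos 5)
Your proposal is correct and takes essentially the same route as the paper's proof: apply \Cref{tk-valuation} to kill the $\tbar_i(\upzeta)$-terms for $i<k$ in the coaction formula, then observe that the remaining terms involve elements of $\ker\epsilon/(t_1,\dots,t_{k-1})$, the lowest-degree of which is $t_k$ in degree $2p^k-2$, forcing the filtration drop. One caution on logical ordering: you cite \Cref{compare-chibar-Pk}, but in the paper that lemma comes \emph{after} this one and implicitly depends on it (this lemma is what makes $\gr\K_*\Xmr$ a $\grKCp$-module in the first place), so it cannot be used as an input here; fortunately your independent sketch using the $\BP_*\BP$-coaction and \Cref{tk-valuation} directly is self-contained and is exactly the paper's argument. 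Also, the ``pass to the associated graded and iterate'' phrasing is slightly less direct than needed — the cleanest version is the paper's: for $x^{\BP}\in\Fil_d\BP_*\Xmr$, each term $\uptheta_{(1)}\otimes x_{(2)}^{\BP}$ of the coaction has $\mr{fil}(x_{(2)}^{\BP})\le d-|\uptheta_{(1)}|$, so the surviving terms after \Cref{tk-valuation} already land in $\Fil_{d-2p^k+2}$ with no iteration required.
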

\begin{proof}
We required that $\Xmr$ be even so the map $\K_*\otimes_{\BP_*}\BP_*\Xmr\to \K_*\Xmr$
is surjective and we can check our claim on the image.

Pick $x^{\BP}\in \Fil_d\BP_* \Xmr$.
Write $x^{\K} = \pi_{\K*}(x^{\BP})$ and $x^{\Fp}=\pi_{\Fp*}(x^{\BP})$.
Write the $\BP$-coaction on $x^{\BP}$ as
\[\Psi(x^{\BP})=1\otimes x^{\BP} + \sum \uptheta_{(1)} \otimes x_{(2)}^{\BP}.\]
The counit axiom says that
\[(\epsilon\otimes 1)(\Psi(x^{\BP}))=x^{\BP}=(\epsilon\otimes 1)(1\otimes x^{\BP})\]
so we may assume that $\uptheta_{(1)}\in\ker\epsilon = (t_1,t_2,\ldots)$.
By definition,
\[ \chi_*(x^\K) = (\upzeta-1)_*(x^{\K}) = \sum
\overline{\uptheta}_{(1)}(\upzeta) \cdot x_{(2)}^{\K}\] 
where $\overline{\uptheta}_{(1)}$ is as in \Cref{notation:theta(g)}. By
\Cref{tk-valuation}, $\overline{\uptheta}_{(1)} (\upzeta) =0$ when 
\[ \overline{\uptheta}_{(1)} \in (t_1,\ldots, t_{k-1}) \subseteq \BP_*\BP, \]
therefore $\chi_*(x^\K) \in  \Fil_{d-|t_k|} \K_*\Xmr  =
\Fil_{d-2p^{k}+2}\K_*\Xmr$ as $t_k$ is the element of lowest degree in $\ker
\epsilon/(t_1, \dots, t_{k-1})$.
\end{proof}

 We introduce an increasing
filtration on $\K_*[\Cp]$ by  assigning
$\chi$ an  `Atiyah Hirzebruch weight'
\[ |\chi|_{\mr{AH}} = -|t_k| = 2- 2 p^k.\]  
We denote the associated graded by $\grKCp$ and the representative of $\chi$ in
$\grKCp$  by $\chibar$.  It follows from \Cref{lem:filtK} that, for an even
spectrum $\Xmr$, the $\K_*[\Cp]$-module structure on $\K_*\Xmr$ induces a
$\grKCp$-module structure on $\gr \K_*\Xmr$ (see \Cref{leading-term-of-grK(X)}).
\begin{lem} \label{graded-Cp}
The bigraded Hopf algebra $\grKCp$ is isomorphic to
\[ \K_*[\chibar]/(\chibar^p), \]
where $\chibar$ is a primitive in the Atiyah Hirzebruch filtration $2 -2p^k$.
\end{lem}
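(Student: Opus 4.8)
The plan is to identify $\KCp$ explicitly as a filtered Hopf algebra and then pass to the associated graded. First I would recall from \Cref{notation-for-rest} that $\K_*\cong\FF[u^{\pm1}]$ has characteristic $p$ (since $p\in\mfrak$), and from \Cref{twist} that the $\Cp$-action on $\K_*$ is trivial, so that $\KCp$ is the honest group ring $\K_*[\upzeta]/(\upzeta^p-1)$ with $\upzeta$ grouplike. Setting $\upzeta=1+\chi$ and invoking the Frobenius identity $(1+\chi)^p=1+\chi^p$, valid in characteristic $p$, the relation $\upzeta^p=1$ becomes $\chi^p=0$, so
\[ \KCp\;\cong\;\K_*[\chi]/(\chi^p) \]
is free over $\K_*$ on $1,\chi,\dots,\chi^{p-1}$.

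Next I would unwind the Atiyah--Hirzebruch filtration introduced just before the statement: with $\K_*$ placed in filtration $0$ and $\chi$ placed in filtration $2-2p^k$, the resulting increasing filtration has $\Fil_d\KCp$ equal to the $\K_*$-span of those $\chi^j$ with $j(2-2p^k)\le d$. Since $\chi^p=0$ this filtration is exhaustive and Hausdorff, and since $1,\chi,\dots,\chi^{p-1}$ are $\K_*$-linearly independent, the only nonzero associated graded pieces sit in filtrations $j(2-2p^k)$ for $0\le j\le p-1$, each a free rank-one $\K_*$-module on $\chibar^{j}$. Hence $\grKCp\cong\K_*[\chibar]/(\chibar^p)$ as a bigraded $\K_*$-algebra, with $\chibar$ in Atiyah--Hirzebruch filtration $2-2p^k$.

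It remains to treat the Hopf structure, which I expect to be the only place requiring a small argument rather than pure bookkeeping. Since $\upzeta$ is grouplike,
\[ \Delta(\chi)=\upzeta\otimes\upzeta-1\otimes1=\chi\otimes1+1\otimes\chi+\chi\otimes\chi,\quad \epsilon(\chi)=0,\quad S(\chi)=\upzeta^{p-1}-1, \]
and each of these respects the filtration; in particular the term $\chi\otimes\chi$ lies in total filtration $2(2-2p^k)$, which is strictly below $2-2p^k$ because $2-2p^k<0$. The associated graded Hopf algebra structure therefore kills the $\chi\otimes\chi$ term, yielding $\Delta(\chibar)=\chibar\otimes1+1\otimes\chibar$, so $\chibar$ is primitive. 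The genuinely content-bearing input is the reduction $(1+\chi)^p-1=\chi^p$, which relies on $\K_*$ having characteristic $p$; everything else is formal, so I do not anticipate a real obstacle.
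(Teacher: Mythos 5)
Your proof is correct and uses the same key computation as the paper: writing $\Delta(\chi)=\chi\otimes1+1\otimes\chi+\chi\otimes\chi$ and observing that the last term drops to lower filtration, so $\chibar$ becomes primitive. The one difference is that the paper's proof is limited to the primitivity claim and treats the algebra isomorphism $\KCp\cong\K_*[\chi]/(\chi^p)$ (from $(1+\chi)^p=1+\chi^p$ in characteristic $p$) as implicit, whereas you spell it out explicitly, together with the filtration bookkeeping — a reasonable and correct expansion rather than a genuinely different route.
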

\begin{proof}
Since $\Delta(\upzeta) = \upzeta \otimes \upzeta$ and $\chi=\upzeta-1$,  we see $\Delta(\chi) = \chi \otimes 1 + 1 \otimes \chi + \chi\otimes\chi.$ 
Thus, in the associated graded $\Delta(\chibar) = \chibar \otimes 1 + 1 \otimes \chibar$. 
\end{proof}

 Let $\Pcal$ be the quotient Hopf algebra of the Steenrod algebra
\[\Pcal=\Acal \mmod (\Qmr_0,\Qmr_1,\ldots)\] 
where $\Qmr_i$ are the Milnor primitives. 
Its dual is the sub Hopf
algebra of $\Acal_*$ generated by the even degree generators 
\[
\Pcal_* \cong
\begin{cases}
\Fp[\xi_1, \xi_2, \dots ] \subset \Acal_* & \text{if $p$ is odd} \\
\Fp[\xi_1^2, \xi_2^2, \dots ] \subset \Acal_* & \text{if $p=2$}.
\end{cases}
\]
Under the map $(\pi_{\FF_p} \sma \pi_{\FF_p})_*\colon \BP_*\BP \to \A_*$ the image of  $t_k$ is
\[(\pi_{\FF_p} \sma \pi_{\FF_p})_*(t_k)=
    \begin{cases}
    c(\xi_k) & \text{if $p$ is odd} \\
    c(\xi_k^2) & \text{if $p=2$,}
    \end{cases}
\]
where $c$ denotes the antipode map of the dual Steenrod algebra. It follows from the definition of $\xi_k \in \Pcal_*$  that its  linear dual  $\Pmr_k \in \Pcal$ satisfies the formula 
\begin{equation} \label{defnPk}
 \Pmr_k(x) = x^{p^k} 
 \end{equation}
where $x \in \H^2\CPinfty_+$ is a generator (see \cite{MIlnorSteenrod}).

\begin{defn} \label{defn:double-steenrod}
Let $\Bcal(k) \subset \Pcal$ denote the sub Hopf algebra generated by $\Pmr_k$.
As a Hopf algebra
\[\Bcal(k)\cong \Fp[\Pmr_k]/(\Pmr_k^{p})\]
where $\Pmr_k$ is a primitive.
\end{defn}
 Note that  $c(\xi_k ) \equiv -\xi_k \mod (\xi_1, \dots, \xi_{k-1})$, which leads to the negative sign in \Cref{leading-term-of-grK(X)}.  Let 
\[ 
\begin{tikzcd}
\upiota_{\Xmr}\colon \H_*\Xmr \rar & \gr \K_* \Xmr \cong \K_* \otimes \H_* \Xmr
\end{tikzcd}
\]
be the map that sends $x \mapsto 1 \otimes x $.
\begin{lem} \label{leading-term-of-grK(X)} For an even spectrum $\Xmr$
\begin{equation} \label{compare-chibar-Pk}
 \chibar (\upiota_\Xmr(x)) = -\tbar_k(\upzeta)\upiota_\Xmr(\Pmr_k(x)) 
 \end{equation}
for all  $x \in \H_* \Xmr$.
\end{lem}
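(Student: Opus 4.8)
The plan is to trace through the definitions and use the filtration comparison already established. We want to compute the action of $\chibar \in \grKCp$ on the generator $\upiota_{\Xmr}(x) \in \gr\K_*\Xmr$ and identify the leading term.

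First I would recall that by the collapse of the AHSS, $\gr\K_*\Xmr \cong \K_* \otimes \H_*\Xmr$, and the class $\upiota_\Xmr(x)$ is represented by any lift $\tilde{x}^{\K} \in \Fil_d\K_*\Xmr$ of $x \in \H_d\Xmr$. Pulling back along the surjection $\K_*\otimes_{\BP_*}\BP_*\Xmr \to \K_*\Xmr$ (valid by evenness), choose $x^{\BP} \in \Fil_d\BP_*\Xmr$ lifting $x$, with $x^{\K} = \pi_{\K*}(x^{\BP})$. As in the proof of \Cref{lem:filtK}, write the $\BP$-coaction as $\Psi(x^{\BP}) = 1 \otimes x^{\BP} + \sum \uptheta_{(1)} \otimes x^{\BP}_{(2)}$ with $\uptheta_{(1)} \in \ker\epsilon = (t_1, t_2, \ldots)$, so that $\chi_*(x^{\K}) = \sum \overline{\uptheta}_{(1)}(\upzeta)\cdot x^{\K}_{(2)}$.

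Next, I would isolate the contribution of lowest Atiyah--Hirzebruch weight, i.e.\ highest filtration. By \Cref{tk-valuation}, $\overline{\uptheta}_{(1)}(\upzeta) = 0$ whenever $\uptheta_{(1)} \in (t_1, \ldots, t_{k-1})$; and $t_k$ is the lowest-degree element of $\ker\epsilon/(t_1,\ldots,t_{k-1})$. So modulo $\Fil_{d - 2p^k + 1}$ (strictly deeper filtration), the only surviving term of $\chi_*(x^{\K})$ comes from the coefficient of $t_k$ in the coaction, together with possible products of $t_i$'s with $i < k$ — but those products have degree $> |t_k|$ unless... actually one must check that any monomial in the $t_i$ other than $t_k$ itself contributes to a strictly deeper filtration: a monomial $t_1^{a_1}\cdots t_{k-1}^{a_{k-1}}t_k^{a_k}\cdots$ lying outside $(t_1,\ldots,t_{k-1})$ must have $a_1 = \cdots = a_{k-1} = 0$, and if it is not equal to $t_k$ it has strictly larger degree, hence lands in strictly deeper filtration after the $\chi$-action shifts by $-|t_k|$. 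Thus in $\gr$, only the coefficient of the linear term $t_k$ matters. Now the coefficient of $t_k$ in $\Psi(x^{\BP})$ for $x^{\BP}$ a lift of a generator $x \in \H_*\CPinfty$ is governed by $(\pi_{\Fp}\sma\pi_{\Fp})_*(t_k) = c(\xi_k)$ (resp.\ $c(\xi_k^2)$ at $p=2$), and since $c(\xi_k) \equiv -\xi_k \bmod (\xi_1,\ldots,\xi_{k-1})$ while the dual $\Pmr_k$ of $\xi_k$ acts by $\Pmr_k(x) = x^{p^k}$ (by \eqref{defnPk}), the coefficient of $t_k$ applied to $x$ yields $-\Pmr_k(x)$ in $\H_*$. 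Combining with the scalar $\tbar_k(\upzeta)$ coming from $\overline{t}_k(\upzeta)\cdot x^{\K}_{(2)}$, we get $\chibar(\upiota_\Xmr(x)) = -\tbar_k(\upzeta)\upiota_\Xmr(\Pmr_k(x))$ as claimed. For a general even $\Xmr$ rather than $\CPinfty$ itself, I would reduce to the universal case: the class $x$ is in the image of a map from (a skeleton of) $\CPinfty$ or, more robustly, use naturality of the coaction and the fact that the relevant coefficient of $t_k$ in the $\BP$-comodule structure on $\H_*\Xmr$ is by definition the action of the dual of $c(\xi_k)$, i.e.\ of $\Pmr_k$ up to the antipode sign.

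The main obstacle I anticipate is bookkeeping the sign and the passage between $t_k$ and its dual Steenrod operation cleanly — specifically, making precise that ``the coefficient of $t_k$ in the $\BP$-coaction, reduced mod $\mfrak$ and compared with the $\Fp$-coaction, implements the operation dual to $c(\xi_k)$'' and that this operation is $-\Pmr_k$ modulo lower terms. The shift by $|t_k| = 2p^k - 2$ in filtration must be matched exactly with the Atiyah--Hirzebruch weight $|\chi|_{\mr{AH}} = 2 - 2p^k$ assigned to $\chi$, so that $\chibar$ genuinely acts on the associated graded with the stated bidegree; this is essentially \Cref{lem:filtK} together with \Cref{graded-Cp}, so it should be a matter of careful citation rather than new work.
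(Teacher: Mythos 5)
Your proposal is correct and follows essentially the same route as the paper's proof: starting from the $\BP_*\BP$-coaction, using \Cref{tk-valuation} to kill the contribution of $(t_1,\ldots,t_{k-1})$, observing that $t_k$ is the unique surviving monomial of minimal degree so only its coefficient contributes to the leading term in $\gr$, and identifying that coefficient via $(\pi_{\Fp}\sma\pi_{\Fp})_*(t_k)=c(\xi_k)\equiv-\xi_k$ and the duality $\Pmr_k(x)=x^{p^k}$. The only superfluous wrinkle is your proposed reduction to the universal case $\CPinfty$: as you note in your "more robustly" aside, the coaction computation works directly for any even $\Xmr$, which is exactly how the paper phrases it.
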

\begin{proof}
Let $p$ be an odd prime. Let  $x^{\Fp}=x$ and $y^{\Fp}=y=\Pmr_{k}(x)$,  $\Ical^{\Fp}_r = \Pcal_*\otimes \Fil_{r}\H_*\Xmr$ and $\Ical^{\BP}_r = \BP_*\BP\otimes_{\BP_*} \Fil_{r}\BP_*\Xmr$.
The $\Acal_*$ coaction on $x^{\Fp}$ is
\begin{align*}
\Psi(x^{\Fp}) &\equiv 1\otimes x^{\Fp}\, + \xi_k\otimes y^{\Fp}\, \mod{ (\xi_1,\ldots,\xi_{k-1}, \Ical^{\Fp}_{d-2p^k}}).\\
\intertext{
    Let $x^{\BP}$ be a lift of $x$ to $\BP_d\Xmr$.
    It follows that the $\BP$ coaction on $x^{\BP}$ is
}
\Psi(x^{\BP}) &\equiv 1\otimes x^{\BP} - t_k\otimes y^{\BP} \mod{(p,\ldots, v_{n-1},t_1,\ldots,t_{k-1}, \Ical^{\BP}_{d-2p^k}}),
\end{align*}
where $y^{\BP}$ is a  lift of $y^{\Fp}$. Since $\overline{\uptheta} (\upzeta) \equiv 0 \mod \mathfrak{m}$ for  $\overline{\uptheta} \in (t_1, \dots, t_{k-1})$, we get 
\[ \upzeta \cdot x^\K \equiv  x^\K - \overline{t}_k(\upzeta) y^\K \mod \Fil_{d - 2p^k}\K_*\Xmr. \]
Since $\upiota_\Xmr( x) = [\pi_{\K*}(x^{\BP})] $ and $\upiota_\Xmr( \Pmr_k(x)) = [\pi_{\K*}(y^{\BP})] $, the result follows.

The same argument works for $p=2$ after replacing $\xi_i$ with $\xi_i^2$ above. 
\end{proof}
\begin{rmk} Following \Cref{graded-Cp} and  \Cref{leading-term-of-grK(X)}, we
see that there is a Hopf algebra isomorphism
\[ \K_* \otimes \mathcal{B}(k) \cong \gr \K_*[\Cp]\] obtained by sending $\Pmr_k
\mapsto  -\overline{t}_k(\upzeta) \chibar$. This isomorphism relates the $\K_*
\otimes \mathcal{B}(k)$-module structure on the left to the $\gr
\K_*[\Cp]$-module structure on the right, of the isomorphism 
\[ \K_* \otimes \H_* \Xmr \cong \gr \K_*\Xmr \]
for any even spectrum $\Xmr$.
\end{rmk}

\subsection{The action of $\Cp$ on $\K_*\CPinfty_+$}
We now explicitly compute the coaction of $\Bcal(k)$ on $\H_*\CPinfty_+$ and
deduce that $\K_*\CPinfty_+$ has a large $\KCp$-free summand.

\begin{notation}
Let $x$ be the generator of $\H^*\CP^\infty_+ \cong \FF_p\llbracket x\rrbracket$
and let $b_i \in \H_{2i}\CP^\infty_+$ be the linear dual of $x^i$. The
$\HFp$-Thom isomorphism for $c \hspace{1pt} \upgamma$ implies $\H^*\CP^\infty_c$ is a free
module of rank one over $\H^*\CPinfty_+$.  Fix an $\HFp$-Thom class $u_c \in
\H^{2c}\CP^\infty_c $.  Let 
\[ b_i \in \H_{2i} \CPinfty_c\] 
 denote the linear dual to $x^{i-n} \cdot u_{c}$. We also use $b_i \in \H_{2i}\CP^{a+ c}_c$ 
to denote the element which maps to $b_i$ under the skeletal
inclusion $\CP^{a+ c}_c \to \CP^{\infty}_c$. 
\end{notation}
\begin{notation} Let $\bk := \frac{|\Pmr_k^{p-1}|}{2} =(p-1)(p^k -1)$.
\end{notation}
\begin{prop}
\label{grK*CPinfty}
Let $c$ be an integer. The action of $\chibar$ on \[ \grK_* \CPinfty_c \cong
\K_*\{b_c,b_{c-1}, \ldots\} \] is given by
\[ \chibar_*(b_i) = \left\lbrace
 \begin{array}{ccc}
 (i-p^k+1)b_{i-p^k+1} & \text{if \ $i \geq p^k -1 +c$  }\\
 0 & \text{otherwise.}
 \end{array} \right.\]
Moreover, there is a $\grKCp$-module isomorphism
\[ \grK_*\CPinfty_c \cong \Fmr_c^{ \ \gr\K} \oplus \Mmr_c^{\ \gr \K}\] 
where $\gr\Fmr_c$ is a free $\grKCp$-module of infinite rank and $\gr\Mmr_c$ is
a finite dimensional $\K_*$-module.
\end{prop}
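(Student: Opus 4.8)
The plan is to prove the two assertions in turn: the formula for the $\chibar$-action, and then the $\grKCp$-module splitting. For the first, I would invoke \Cref{leading-term-of-grK(X)} together with the ensuing Hopf-algebra identification $\K_*\otimes\Bcal(k)\cong\grKCp$ sending $\Pmr_k\mapsto-\tbar_k(\upzeta)\chibar$. Since $\tbar_k(\upzeta)$ is a unit by \Cref{tk-valuation}, this reduces the computation of the $\chibar$-action on $\grK_*\CP^\infty_c\cong\K_*\otimes\H_*\CP^\infty_c$ to the action of the primitive $\Pmr_k\in\Bcal(k)\subset\Acal$ on $\H_*\CP^\infty_c$, up to the unit $-\tbar_k(\upzeta)$ (which can be cleared by rescaling each $b_i$ by a power of $\tbar_k(\upzeta)$ depending only on the residue of its index modulo $p^k-1$, and in any case does not affect the module-theoretic conclusions below). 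Dualizing, the relevant operator on $\H_*\CP^\infty_c$ is the transpose of the derivation $\Pmr_k$ on $\H^*\CP^\infty_c=\Fp\llbracket x\rrbracket\cdot u_c$.

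Computing that derivation: from \eqref{defnPk}, $\Pmr_k(x)=x^{p^k}$, hence $\Pmr_k(x^j)=j\,x^{j+p^k-1}$. For the Thom class, write $\Pmr_k(u_\upxi)=\theta_\upxi\cdot u_\upxi$ for a complex vector bundle $\upxi$; the derivation property of $\Pmr_k$ together with the multiplicativity $u_{\upxi\oplus\upeta}=u_\upxi\cdot u_\upeta$ of Thom classes shows $\theta_{\upxi\oplus\upeta}=\theta_\upxi+\theta_\upeta$, so $\theta$ is additive in $\upxi$ and extends to virtual bundles. For the tautological line bundle, the identification $\Th(\upgamma)\simeq\CP^\infty$ of \Cref{thomCP} takes $u_\upgamma$ to $x\in\H^2$, and the Thom isomorphism takes $x^{p^k}$ to $x^{p^k-1}u_\upgamma$, so $\theta_\upgamma=x^{p^k-1}$ and hence $\theta_{c\upgamma}=c\,x^{p^k-1}$ for every $c\in\ZZ$. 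Thus $\Pmr_k(x^j u_c)=(j+c)\,x^{j+p^k-1}u_c$, and transposing along the pairing $\langle x^{i-c}u_c,b_i\rangle=1$ gives $\chibar_*(b_i)=(i-p^k+1)\,b_{i-p^k+1}$ when $i\ge p^k-1+c$ and $\chibar_*(b_i)=0$ otherwise (the target class simply not existing in the remaining range). A consistency check with $\chibar^p=0$ is that $\prod_{\ell=1}^{p}(i-\ell(p^k-1))\equiv0\pmod p$, since $p^k-1\equiv-1\pmod p$.

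For the splitting, set $q=p^k-1$. Because $\chibar_*$ lowers the index by $q$, it preserves the decomposition of $\grK_*\CP^\infty_c$ by residue of the index modulo $q$, so it suffices to analyze each chain $C_{c'}:=\K_*\{v_m:=b_{c'+mq}:m\ge0\}$ with $c\le c'<c+q$: this is a $\grKCp$-submodule with $\chibar_*v_0=0$ and $\chibar_*v_m=(c'+(m-1)q)v_{m-1}$ for $m\ge1$, and since $q\equiv-1\pmod p$ the coefficient $(c'+(m-1)q)$ vanishes in $\K_*$ exactly when $m\equiv c'+1\pmod p$. Hence $\chibar_*$ vanishes on $v_m$ precisely for $m=0$ and for positive $m\equiv c'+1\pmod p$, and the $\K_*$-spans of the blocks of $v_m$'s lying between two consecutive such indices are $\chibar$-stable and split $C_{c'}$ into cyclic $\grKCp$-modules. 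If $c'\equiv-1\pmod p$ these vanishing indices are the multiples of $p$, so every block has length $p$ and is a free rank-one $\grKCp$-module (up to a shift in the bigrading); if $c'\not\equiv-1\pmod p$, with $t\in\{1,\dots,p-1\}$ the residue of $c'+1$ modulo $p$, the blocks are $\{v_0,\dots,v_{t-1}\}$ and $\{v_{t+jp},\dots,v_{t+(j+1)p-1}\}$ for $j\ge0$: all but the first are free of rank one, while the first is $\cong\grKCp/(\chibar^t)$ up to a shift, of $\K_*$-rank $t\le p-1$. Letting $\gr\Fmr_c$ be the sum of all length-$p$ blocks over all residues $c'$ (a free $\grKCp$-module of countably infinite rank) and $\gr\Mmr_c$ the sum of the at most $q$ exceptional bottom blocks (total $\K_*$-rank at most $(p-1)q=\bk$, hence finite), we obtain $\grK_*\CP^\infty_c\cong\gr\Fmr_c\oplus\gr\Mmr_c$ as $\grKCp$-modules, as claimed.

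I expect the two friction points to be: (i) the Thom-class identity $\Pmr_k(u_c)=c\,x^{p^k-1}u_c$ for all integers $c$, where one must justify extending the characteristic class $\theta$ from honest to virtual bundles, for which its additivity (rather than mere multiplicativity) is what makes the extension clean; and (ii) the bookkeeping in the block decomposition — verifying that each block is a genuine $\grKCp$-\emph{submodule} rather than a subquotient, and that there is at most one non-free block per chain (the ``bottom'' one) — both of which rest on the precise vanishing pattern of $c'+(m-1)q$ modulo $p$.
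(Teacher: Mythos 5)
Your proof is correct, and the first half (the formula for the $\chibar$-action) is essentially identical to the paper's: invoke \Cref{leading-term-of-grK(X)} to transfer the question to the $\Bcal(k)$-action on $\H_*\CPinfty_c$, note $\Pmr_k$ is primitive so it acts as a derivation, and compute $\Pmr_k(x^i u_c)=(i+c)x^{i+p^k-1}u_c$ before dualizing. One point where you actually do more than the paper: the paper writes $\Pmr_k(u_c)=cx^{p^k-1}u_c$ without comment, and since $c$ is an arbitrary integer this requires a small argument to extend from honest to virtual bundles. Your observation that the quantity $\theta_\xi$ defined by $\Pmr_k(u_\xi)=\theta_\xi u_\xi$ is additive in $\xi$ (via the derivation property plus multiplicativity of Thom classes) cleanly fills this gap.

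Where you genuinely diverge is the splitting. The paper simply exhibits a free $\grKCp$-submodule generated by $\{b_{pi}:i>(\bk-c)/p\}$, checks it lands inside $\gr\K_*\CPinfty_c$, and then splits it off by appealing to the self-injectivity of $\grKCp$ (\Cref{selfinjective}), declaring the complement to be $\Mmr_c^{\gr\K}$ with no further description. You instead produce a complete, explicit direct-sum decomposition: first by residue of the index modulo $q=p^k-1$ into chains $C_{c'}$, then each chain into cyclic $\grKCp$-blocks delimited by the vanishing pattern of the coefficient $c'+(m-1)q\bmod p$. This gives an honest complement rather than invoking relative injectivity, and it moreover makes the "Jordan type" of the finite summand $\gr\Mmr_c$ transparent (a sum of cyclic modules $\grKCp/(\chibar^t)$ with $1\le t\le p-1$, one per chain with $c'\not\equiv-1\bmod p$, of total $\K_*$-rank $\le\bk$). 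The two lists of free generators coincide — your block tops $v_{s+p-1}=b_{c'+(s+p-1)q}$ have index $\equiv0\bmod p$ exactly when $s\equiv c'+1\bmod p$, matching the paper's $b_{pi}$'s — so the approaches are compatible; yours simply buys more information (and avoids \Cref{selfinjective}) at the cost of more bookkeeping. Both are valid.
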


\begin{rmk}
\label{selfinjective}
The Hopf algebras $\KCp$ and $\grKCp$ are self injective. In other words, if $\mr{M}$
is a $\KCp$-module and \[ i\colon\KCp\{\iota \}\to \mr{M} \] is an injective map then
$i$ admits a splitting (and similarly for $\grKCp$-modules).
\end{rmk}

\begin{proof}
By \Cref{leading-term-of-grK(X)}, it suffices to compute the action of
$\Bcal(k)$ on $\H_*\CPinfty_c$. Since $\Pmr_k \in \Pcal$ is primitive (see
\Cref{prim}), its action on  $\H^*\CPinfty \cong \FF_p[\![x]\!]$ follows the Leibniz rule
\[ \Pmr_k(x^i) = ix^{i-1}\Pmr_k(x).\] 
The action of $\Pmr_k$ on $\H^*\CPinfty_c \cong \FF_p[\![x]\!] \cdot u_{c}$ can
be calculated using the formula
\[ \Pmr_k(x^i \cdot u_c) = \Pmr_k(x^i) \cdot u_{c} + x^i \cdot  \Pmr_k(u_{c}).\]

By \Cref{defnPk}, $\Pmr_k(x) = x^{p^k}$ and 
$\Pmr_k (u_c) = cx^{p^k-1}\cdot u_c$. Therefore,
\[ \Pmr_k(x^i \cdot u_c) = (i+c) x^{i+p^k-1} \cdot u_{c}. \]
Dually 
\[ \Pmr_k(b_i) = (i-p^k+1) b_{i-p^k+1},\] 
where it is understood that $\Pmr_{k}(b_i) = 0$ if $i-p^k+1<c$. Thus, $\gr\K_*
\CPinfty_c$ contains an infinite rank free $\grKCp$-submodule $\Fmr^{\ \gr
\K}_c$ generated by the set
\[\{ b_{pi}\colon i > (\bk - c)/p \}.\]
By \Cref{selfinjective}, $\Fmr^{\ \gr \K}_c$ is a summand of $\gr\K_*
\CPinfty_c$. We denote its complement by $\Mmr_c^{\ \gr\K}$.
\end{proof} 

\begin{rmk}
\label{prim}
If $p=2$, then $\Delta(\Pmr_k) = \Pmr_k\otimes 1 + \Qmr_k\otimes \Qmr_k +
1\otimes \Pmr_k$ in $\Acal$ but the quotient map $\Acal\to \Pcal$ sends
$\Qmr_k\mapsto 0$ and so we see that $\Pmr_k$ is primitive as an element of
$\Pcal$ even though it is not a primitive element of $\Acal$. If $p$ is odd,
$\Pmr_k$ is primitive even in $\Acal$.
\end{rmk}

\begin{lem}
\label{lift-free-summand-from-grKCp}
Let $\Mmr$ be a $\KCp$-module which admits an increasing
filtration 
\[\{ 0 \} = \Fil_{-c}\Mmr  \subset \dots \subset \Fil_0 \Mmr \subset \Fil_1\Mmr \subset \dots \subset \Mmr  \]
so that $\Mmr = \colim_i \Fil_i\Mmr$. Suppose that the action of $\K_*[\Cp]$ on
$\Mmr$ satisfies 
 \[ \chi \Fil_k\Mmr \subset \Fil_{k - 2p^k +2} \Mmr\]
so that $\gr \Mmr$ is a $\gr \K_*[\Cp]$-module. Also, suppose there is an injective
$\grKCp$-module map  
\[ 
\begin{tikzcd}
 \tilde{\upalpha}\colon\tilde{\Fmr} \rar &\gr \Mmr ,
\end{tikzcd}
\]  
where $\tilde{\Fmr}$ is a free $\grKCp$-module.  Then there exists  a lift of
$\tilde{\upalpha}$ to an injective $\KCp$-module map
\[ 
\begin{tikzcd}
\upalpha\colon \Fmr \rar & \Mmr,
\end{tikzcd}
\]
where $\Fmr$ is a free $\KCp$-module. In fact, $\upalpha$ is the inclusion of a
summand.
\end{lem}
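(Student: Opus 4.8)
The plan is to construct $\upalpha$ by lifting a homogeneous basis of $\tilde{\Fmr}$ to $\Mmr$, check that the lift is injective by comparing with $\tilde{\upalpha}$ on associated gradeds, and split it off using that free $\KCp$-modules are injective. First, choose a homogeneous $\grKCp$-basis $\{w_j\}_{j\in J}$ of the free module $\tilde{\Fmr}$. Since $\tilde{\upalpha}$ is a map of (bi)graded modules, each $\tilde{\upalpha}(w_j)$ is a homogeneous element of $\gr\Mmr$, hence lies in a single filtration degree $d_j$, and the $d_j$ are bounded below because $\tilde{\Fmr}$ embeds into $\gr\Mmr$ and the filtration of $\Mmr$ is bounded below. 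Pick a lift $\tilde{w}_j\in\Fil_{d_j}\Mmr$ of $\tilde{\upalpha}(w_j)$. Set $\Fmr\coloneqq\bigoplus_{j\in J}\KCp\{e_j\}$, a free $\KCp$-module, and filter it by placing $e_j$ in filtration degree $d_j$ and decreeing that $\chi$ lowers filtration by $2p^{k}-2$, exactly as on $\Mmr$. Using the identification $\grKCp\cong\K_*[\chibar]/(\chibar^{p})$ of \Cref{graded-Cp}, one checks that this filtration on $\Fmr$ is bounded below and exhaustive and that its associated graded is canonically $\tilde{\Fmr}$ under $\bar{e}_j\leftrightarrow w_j$. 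Let $\upalpha\colon\Fmr\to\Mmr$ be the $\KCp$-linear map with $\upalpha(e_j)=\tilde{w}_j$. By construction $\upalpha$ preserves filtration, and $\gr\upalpha$ is the $\grKCp$-linear map with $\bar{e}_j\mapsto[\tilde{w}_j]=\tilde{\upalpha}(w_j)$; since $\gr\Fmr$ is free on the $\bar{e}_j$, this forces $\gr\upalpha=\tilde{\upalpha}$ on the nose.

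Next I would show $\upalpha$ is injective. Given $0\neq x\in\Fmr$, the filtration on $\Fmr$ is exhaustive and bounded below, so there is a least integer $d$ with $x\in\Fil_d\Fmr$, and then the image $[x]$ of $x$ in $\gr_d\Fmr$ is nonzero. Because $\upalpha$ preserves filtration and $[\upalpha(x)]=\gr\upalpha([x])=\tilde{\upalpha}([x])\neq 0$, we get $\upalpha(x)\notin\Fil_{d-1}\Mmr$, so $\upalpha(x)\neq 0$. This is the standard fact that a filtered map which is injective on associated gradeds is injective once the source filtration is exhaustive and bounded below.

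Finally, $\Fmr$ is a free, hence projective, $\KCp$-module, and it is also injective: $\KCp$ is self-injective by \Cref{selfinjective}, and since $\K_*$ is a graded field the ring $\KCp$ is Noetherian, so a direct sum of injective $\KCp$-modules is again injective. An injective submodule of any module is a direct summand, so extending $\mathbbm{1}_{\Fmr}$ along the monomorphism $\upalpha$ produces a retraction $\Mmr\to\Fmr$ and exhibits $\upalpha$ as the inclusion of a summand. I expect the splitting to be the main obstacle: \Cref{selfinjective} is phrased only for cyclic free submodules, so to handle a free module of possibly infinite rank one must either invoke the Noetherian hypothesis as above, or, more by hand, lift the splitting of $\gr\upalpha$ (which exists by self-injectivity of $\grKCp$) step by step up the filtration of $\Mmr$; one must also be careful in the first step that the auxiliary filtration on $\Fmr$ is arranged so that $\gr\upalpha$ equals $\tilde{\upalpha}$ exactly and not merely modulo lower filtration.
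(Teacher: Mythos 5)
Your proof is correct and takes essentially the same approach as the paper's: choose a homogeneous $\grKCp$-basis of $\tilde{\Fmr}$, lift the images $\tilde{\upalpha}(w_j)$ to elements $\tilde{w}_j\in\Fil_{d_j}\Mmr$, define $\upalpha$ on the free module $\Fmr$ by sending the corresponding basis to these lifts, deduce injectivity from injectivity of $\tilde{\upalpha}=\gr\upalpha$ on associated gradeds, and split off $\Fmr$ using self-injectivity of $\KCp$. You also correctly observe that \Cref{selfinjective} as stated only handles a cyclic free submodule, and your repair --- $\KCp$ is Noetherian (being finite over the graded field $\K_*$), so arbitrary direct sums of injectives are injective --- cleanly addresses the possibly infinite rank of $\Fmr$, a point the paper's one-line appeal to self-injectivity leaves implicit.
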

\begin{proof}
Let us choose a basis so that we may write 
\[ \tilde{\Fmr}= \bigoplus_{i \in \mr{I}}\grKCp\{\tilde{\iota}_i\} \text{ and }
  \Fmr= \bigoplus_{i \in \mr{I}}\K_*[\Cp]\{\iota_i\}.\] 
By assumption, any element $m_i \in \Mmr$ such that  
$[m_i]= \tilde{\upalpha}(\tilde{\iota}_i)$ satisfies 
\[ \chibar^i [m_i] = [\chi^i(m_i)].\]
 Fix such an $m_i$ for each $i > -c$. Now
define the map $\upalpha$ by setting $\upalpha(\iota_i) = m_i$ and extending it
linearly to a $\K_*[\Cp]$-module map. Clearly $\upalpha$ is injective as
$\tilde{\upalpha}$ is injective, and it splits as $\K_*[\Cp]$ is self-injective
(see \Cref{selfinjective}).
\end{proof}
\Cref{grK*CPinfty} and \Cref{lift-free-summand-from-grKCp} imply:
\begin{cor}
\label{KCPinfty-decomposition}
 For any $c \in \ZZ$, there exists a $\KCp$-module  isomorphism
\[ \K_*\CPinfty_c \cong \Fmr_c^\K \oplus \Mmr_c^\K,\] 
where  $\Fmr_c^\K$ is  free  and $\Mmr_c^\K$ is finitely generated  as $\KCp$-modules.
\end{cor}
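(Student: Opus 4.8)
The plan is to promote the algebraic splitting of \Cref{grK*CPinfty}, which is a statement about associated gradeds, to a genuine $\KCp$-module splitting of $\K_*\CPinfty_c$ by means of \Cref{lift-free-summand-from-grKCp}. First I would observe that $\CPinfty_c=\M(c\upgamma)$ is an even, bounded-below spectrum, so the skeletal (Atiyah--Hirzebruch) filtration on $\K_*\CPinfty_c$ is an exhaustive, bounded-below increasing filtration with associated graded $\gr\K_*\CPinfty_c\cong\K_*\otimes\H_*\CPinfty_c$, and by \Cref{lem:filtK} it satisfies $\chi\cdot\Fil_d\K_*\CPinfty_c\subseteq\Fil_{d-2p^k+2}\K_*\CPinfty_c$. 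Thus $\gr\K_*\CPinfty_c$ is a $\grKCp$-module of exactly the kind that \Cref{lift-free-summand-from-grKCp} accepts as input.

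Next I would feed in \Cref{grK*CPinfty}, which supplies a $\grKCp$-module decomposition $\gr\K_*\CPinfty_c\cong\Fmr_c^{\gr\K}\oplus\Mmr_c^{\gr\K}$ in which $\Fmr_c^{\gr\K}$ is free of infinite rank and $\Mmr_c^{\gr\K}$ is finite-dimensional over $\K_*$. Applying \Cref{lift-free-summand-from-grKCp} to the inclusion $\tilde\upalpha\colon\Fmr_c^{\gr\K}\hookrightarrow\gr\K_*\CPinfty_c$ --- whose hypotheses were all verified in the previous paragraph --- produces a free $\KCp$-module $\Fmr_c^\K$ together with a $\KCp$-linear map realizing it as a direct summand, hence a splitting $\K_*\CPinfty_c\cong\Fmr_c^\K\oplus\Mmr_c^\K$ with $\Fmr_c^\K$ free. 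It remains only to see that the complement $\Mmr_c^\K$ is finitely generated.

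For this last point I would pass back to associated gradeds. By the construction in \Cref{lift-free-summand-from-grKCp} the free generators of $\Fmr_c^\K$ are lifts of the homogeneous free generators of $\Fmr_c^{\gr\K}$ on which the powers of $\chi$ reduce to the corresponding powers of $\chibar$, so $\gr\Fmr_c^\K\cong\Fmr_c^{\gr\K}$ for the induced filtration; comparing Atiyah--Hirzebruch degrees then forces $\gr\Mmr_c^\K$ to be concentrated in finitely many degrees (as $\Mmr_c^{\gr\K}$ is finite over $\K_*$ and each of its generators lies in a single skeletal degree). Since the induced filtration on $\Mmr_c^\K$ is bounded below and exhausts, it stabilizes at the part coming from a finite skeleton of $\CPinfty_c$, and $\K_*$ of a finite skeleton is a finitely generated $\K_*$-module; therefore $\Mmr_c^\K$ is finitely generated over $\K_*$, and \emph{a fortiori} over $\KCp$. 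I do not anticipate a real obstacle: the substantive input is already packaged in \Cref{grK*CPinfty} and \Cref{lift-free-summand-from-grKCp}, and the only step needing a little care is this bookkeeping one, namely upgrading ``finite-dimensional associated graded'' to ``finitely generated'', which is routine because the filtration is bounded below and exhaustive.
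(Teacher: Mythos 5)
Your proposal matches the paper's (unwritten) proof: the corollary is stated as an immediate consequence of \Cref{grK*CPinfty} and \Cref{lift-free-summand-from-grKCp}, and you supply precisely those two inputs, first checking the filtration hypothesis via \Cref{lem:filtK} and then lifting the free summand. The only place that deserves slightly more care is the deduction that $\Mmr_c^\K$ is finitely generated: rather than directly equating $\gr\Mmr_c^\K$ with $\Mmr_c^{\gr\K}$ (the decomposition $\K_*\CPinfty_c\cong\Fmr_c^\K\oplus\Mmr_c^\K$ need not be filtration-compatible, since the splitting map from \Cref{selfinjective} is not asserted to be filtered), it is cleaner to note that $\upalpha$ is a strict monomorphism because $\gr\upalpha=\tilde\upalpha$ is injective, so the cokernel of $\upalpha$ inherits a bounded-below exhaustive filtration with $\gr\cofiber(\upalpha)\cong\gr\K_*\CPinfty_c/\Fmr_c^{\gr\K}\cong\Mmr_c^{\gr\K}$, which is finite over $\K_*$; boundedness then gives finite generation of the cokernel, and the cokernel is isomorphic to $\Mmr_c^\K$. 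This is a cosmetic repair of your bookkeeping paragraph, not a gap in the overall strategy.
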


\section{An upper bound on the $\EO$-orientation order of $\upgamma$}
\label{sec:main-results}
In \Cref{sub:splitting}, we lift the $\KCp$-module splitting of
\Cref{KCPinfty-decomposition} to an $\EO$-module splitting 
(see \Cref{splittingEOCPc}). In \Cref{sub:EOThom} we show that the inclusion of the
compact summand of $\EO \sma \CPinfty_c$  factors through $\EO \sma \CP^{c +
d}_c$, where $d = p^k(p-1) -1$. This leads to the proof of \Cref{main1}. In
\Cref{sub:Tate}, we  prove \Cref{main-tate}.

\subsection{A splitting of $\EO \sma \CP^\infty_c$} 
\label{sub:splitting} 
Now we extend the `$\text{free} \hspace{.7pt} \oplus  \hspace{.7pt} \text{finite}$'
decomposition of $\K_*[\Cp]$-modules in \Cref{KCPinfty-decomposition}
 to a `$\text{free } \oplus \text{ finite}$'  decomposition of
$\E_*[\Cp]^\sigma$-comodules (see \Cref{ECPinfty-decomposition}) using the fact
that a free $\E_*[\Cp]^\sigma$-module  is injective relative to $\E_*$ (see
\Cref{relinj}). 
\begin{defn}[\cite{relatively-injective}]
Let  $\Smr$ be a subring of a commutative ring $\R$. An $\R$-module
$\mr{M}$ is \emph{$(\R, \Smr)$-injective} (injective relative to $\Smr$) if
every $\R$-module  injection $\Mmr \hookrightarrow \Nmr$ that splits  in the
category of $\Smr$-modules  also splits in the category of $\R$-modules.
\end{defn}
\begin{prop}
\label{relinj}
A free $\ECpsigma$-module $\Fmr$ is $(\ECpsigma, \E_*)$-injective. 
\end{prop}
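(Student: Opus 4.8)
The plan is to prove the structural statement that every free $\ECpsigma$-module is \emph{coinduced} from $\E_*$, and then use the soft fact that coinduced modules are automatically $(\ECpsigma,\E_*)$-injective. In other words, the real content is that $\E_*\hookrightarrow\ECpsigma$ is a Frobenius extension, so that the coinduction functor $\Coind=\Hom_{\E_*}(\ECpsigma,-)$ agrees with the induction functor $\Ind=\ECpsigma\otimes_{\E_*}-$; since free modules are exactly the induced ones $\ECpsigma\otimes_{\E_*}V$ with $V$ free over $\E_*$, this identifies them with coinduced modules, which are relatively injective on general grounds.

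First I would establish that coinduced modules are relatively injective. For a left $\E_*$-module $V$, equip $\Coind(V)=\Hom_{\E_*}(\ECpsigma,V)$ with its left $\ECpsigma$-structure via the right multiplication on $\ECpsigma$, i.e. $(r\cdot f)(x)=f(xr)$; the restriction--coinduction adjunction then gives a natural isomorphism $\Hom_{\ECpsigma}(\Mmr,\Coind(V))\cong\Hom_{\E_*}(\Mmr,V)$. If $i\colon\Mmr\hookrightarrow\Nmr$ is an $\ECpsigma$-module monomorphism that is split over $\E_*$, then precomposition with the $\E_*$-retraction shows $i^{*}\colon\Hom_{\E_*}(\Nmr,V)\to\Hom_{\E_*}(\Mmr,V)$ is surjective, hence by naturality so is $i^{*}\colon\Hom_{\ECpsigma}(\Nmr,\Coind(V))\to\Hom_{\ECpsigma}(\Mmr,\Coind(V))$. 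Taking $\Mmr=\Coind(V)$ and lifting $\id_{\Coind(V)}$ produces an $\ECpsigma$-retraction of $i$, so $\Coind(V)$ is $(\ECpsigma,\E_*)$-injective.

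Then I would identify $\ECpsigma$ itself, as a left module over itself, with $\Coind(\E_*)$. Writing elements uniquely as $\sum_{i=0}^{p-1}a_i\upzeta^{i}$ with $a_i\in\E_*$, let $\tau\colon\ECpsigma\to\E_*$ extract the coefficient $a_0$ of $\upzeta^{0}$. Using $\upzeta^{i}e=\upzeta^{i}(e)\upzeta^{i}$ and commutativity of $\E_*$, one checks that $\tau$ is an $\E_*$-bimodule map, so $\Phi\colon\ECpsigma\to\Hom_{\E_*}(\ECpsigma,\E_*)=\Coind(\E_*)$, $\Phi(a)(x)=\tau(xa)$, is a well-defined left $\ECpsigma$-module map. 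Since $\Phi(\upzeta^{j})$ is the functional $\upzeta^{i}\mapsto\tau(\upzeta^{i+j})$, which is $1$ when $i+j\equiv0\ (\mathrm{mod}\ p)$ and $0$ otherwise, $\Phi$ sends the $\E_*$-basis $\{\upzeta^{j}\}$ onto the dual basis up to a permutation, so its matrix is invertible and $\Phi$ is an isomorphism. This is exactly the Frobenius-extension property.

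Finally, for an arbitrary free module $\Fmr\cong\bigoplus_{i\in\mr{I}}\ECpsigma$: because $\ECpsigma$ is finitely generated and free as an $\E_*$-module, $\Hom_{\E_*}(\ECpsigma,-)$ commutes with direct sums, so the previous step gives $\Fmr\cong\bigoplus_{\mr{I}}\Coind(\E_*)\cong\Coind\bigl(\bigoplus_{\mr{I}}\E_*\bigr)$; by the first step this is $(\ECpsigma,\E_*)$-injective, which is the claim. I do not expect a genuine obstacle here: the only delicate point is the left/right bookkeeping in the twisted ring — in particular checking that $\tau$ is an honest $\E_*$-bimodule map so that $\Phi$ is $\ECpsigma$-linear, and that the infinite direct sum does not cause trouble (it does not, by finiteness of $\ECpsigma$ over $\E_*$) — after which the argument is entirely formal. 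All modules are graded and each step respects the grading.
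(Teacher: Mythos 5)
Your proof is correct and follows essentially the same route as the paper's: the paper cites Hochschild's lemma for the relative injectivity of coinduced modules $\Hom_{\E_*}(\ECpsigma,-)$ and asserts the self-duality $\ECpsigma\cong\Hom_{\E_*}(\ECpsigma,\E_*)$ as left $\ECpsigma$-modules, exactly the two ingredients you use. You simply reprove Hochschild's lemma inline via the restriction--coinduction adjunction, make the Frobenius isomorphism $\Phi$ explicit via the coefficient-of-$\upzeta^{0}$ form $\tau$, and spell out the reduction from rank one to arbitrary free modules using finiteness of $\ECpsigma$ over $\E_*$ (a step the paper leaves implicit).
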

\begin{proof}
Hochschild \cite[Lemma 1]{relatively-injective} showed that
$\Hom_{\Smr}(\Rmr,\Amr)$ is $(\R, \Smr)$-injective when $\Smr$ is a subring
of $\Rmr$ and $\Amr$ is an $\Smr$-module.
Since
\[ \ECpsigma\cong\Hom_{\E_*}(\ECpsigma, \E_*)\]
as a $\E_*[\Cp]^{\sigma}$-module, the result follows. 
\end{proof}
\begin{lem}
\label{K-to-E-splitting}
Suppose $\mr{Q}^\E$ is a free $\ECp$-module whose underlying $\E_*$-module is
free and there is a $\KCp$-module splitting
 \[ \mr{Q}^\E/\mathfrak{m} \cong \Fmr^\K \oplus \Mmr^\K \]  
where $\Fmr^\K$ is free and $\Mmr^\K$ is finitely generated. Then there is an $\ECp$-module splitting
\[  \mr{Q}^{\E} \cong \Fmr^\E \oplus \Mmr^\E,\]
where $\Fmr^\E$ is free and $\Mmr^\E$ is finitely generated.
\end{lem}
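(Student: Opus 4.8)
The plan is to lift a $\KCp$-basis of the free summand $\Fmr^\K$ to $\mr{Q}^\E$, check that the resulting $\ECp$-linear map out of a free module splits $\E_*$-linearly, and then upgrade this to an $\ECp$-linear splitting using the relative injectivity of free $\ECp$-modules established in \Cref{relinj}. Concretely, fix a $\KCp$-basis $\{\bar\iota_i\}_{i\in I}$ of $\Fmr^\K$ and lift each $\bar\iota_i$ along the quotient $\mr{Q}^\E\twoheadrightarrow\mr{Q}^\E/\mfrak$ to an element $\iota_i\in\mr{Q}^\E$. Let $\Fmr^\E:=\bigoplus_{i\in I}\ECp\{\iota_i\}$, a free $\ECp$-module, and let $\upalpha\colon\Fmr^\E\to\mr{Q}^\E$ be the $\ECp$-linear map sending $\iota_i$ to $\iota_i$. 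By construction $\upalpha\otimes_{\E_*}\K_*$ is the inclusion of the summand $\Fmr^\K\hookrightarrow\mr{Q}^\E/\mfrak$, and in particular it is a split monomorphism of $\K_*$-modules.

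Next I would show $\upalpha$ is a split monomorphism of $\E_*$-modules. Since $\mr{Q}^\E$ is free over $\E_*$ (this is where that hypothesis enters), the reduction map $\Hom_{\E_*}(\mr{Q}^\E,\Fmr^\E)\to\Hom_{\K_*}(\mr{Q}^\E/\mfrak,\Fmr^\E/\mfrak)$ is surjective, so a chosen $\K_*$-linear retraction of $\upalpha\otimes_{\E_*}\K_*$ lifts to an $\E_*$-linear map $r\colon\mr{Q}^\E\to\Fmr^\E$. Then $r\circ\upalpha=\id+\e$, where $\e=r\circ\upalpha-\id$ maps $\Fmr^\E$ into $\mfrak\,\Fmr^\E$, so $\id+\e$ is invertible with inverse $\sum_{j\ge 0}(-\e)^j$ (the sum converging $\mfrak$-adically), and $(\id+\e)^{-1}\circ r$ retracts $\upalpha$. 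Thus $\upalpha$ is an $\ECp$-module monomorphism that splits in $\E_*$-modules; since $\Fmr^\E$ is a free $\ECp$-module it is $(\ECp,\E_*)$-injective by \Cref{relinj}, so $\upalpha$ splits in $\ECp$-modules and we may write $\mr{Q}^\E\cong\Fmr^\E\oplus\Mmr^\E$ as $\ECp$-modules.

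Finally I would check that $\Mmr^\E$ is finitely generated. Being a direct summand of the $\E_*$-free module $\mr{Q}^\E$, it is $\E_*$-projective, hence $\E_*$-free. Reducing the splitting modulo $\mfrak$ and using that $\Fmr^\E/\mfrak=\Fmr^\K$, the module $\Mmr^\E/\mfrak$ is a complement of $\Fmr^\K$ inside $\mr{Q}^\E/\mfrak\cong\Fmr^\K\oplus\Mmr^\K$, hence isomorphic to $\Mmr^\K$; and $\Mmr^\K$ is finitely generated over $\K_*$ because it is finitely generated over the rank-$p$ $\K_*$-algebra $\KCp$. Therefore $\Mmr^\E$ is $\E_*$-free of finite rank, so finitely generated over $\E_*$ and a fortiori over $\ECp$. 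The delicate point is the $\mfrak$-adic convergence used to invert $\id+\e$ — equivalently, producing the $\E_*$-linear splitting in the first place — and this is where completeness of the modules in question (or, in the filtered situation where the lemma is applied, local nilpotence of $\e$) is needed; the rest of the argument is formal.
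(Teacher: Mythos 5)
Your approach matches the paper's almost exactly: lift the free summand to $\Fmr^\E\subset\mr{Q}^\E$ by lifting a basis of $\Fmr^\K$, show the lifted inclusion splits $\E_*$-linearly, and then invoke Proposition~\ref{relinj} to upgrade the retraction to an $\ECp$-linear one. The paper constructs $\iota_\E$ by lifting $\iota_\K$ and then asserts, from freeness of $\Fmr^\E$ and $\mr{Q}^\E$, that an $\E_*$-linear retraction $\pi_\E$ exists (invoking Krull intersection only for injectivity of $\iota_\E$); your $\id+\e$ bookkeeping makes that step explicit.

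You are right to flag the convergence of $\sum_{j\ge0}(-\e)^j$: on an infinitely generated module this series need not converge (e.g.\ $\e(e_i)=pe_{i+1}$ on $\bigoplus_\NN\ZZ_p$ gives a non-surjective $\id+\e$). The way out is not $\mfrak$-adic completeness of the module but the grading. In every application $\Fmr^\E$ and $\mr{Q}^\E$ are graded $\E_*$-modules that are finitely generated over $\E_0$ in each internal degree, and $\e=r\circ\upalpha-\id$ is a degree-zero $\E_*$-linear map. Hence $(\id+\e)_d$ is an endomorphism of a finitely generated free $\E_0$-module whose matrix is congruent to the identity mod $\mfrak$, and is therefore invertible because $\E_0$ is a local ring. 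These degreewise inverses automatically assemble into an $\E_*$-linear inverse (apply $\E_*$-linearity of $\id+\e$ to $y=(\id+\e)_d^{-1}(x)$), with no infinite sums required. This also justifies the paper's appeal to Krull intersection: $\bigcap_n\mfrak^nF=0$ holds for any $\E_*$-free $F$ since $\mfrak^n$ commutes with direct sums. The remainder of your argument (relative injectivity of $\Fmr^\E$, identification of $\Mmr^\E/\mfrak$ with $\Mmr^\K$, and the deduction of finite rank from graded projectivity over the graded local ring $\E_*$) is correct and in line with the paper's intent.
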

\begin{proof} Let $ \Fmr^\E$ be a
 free $\ECp$-module  such that $\Fmr^\E/
\mathfrak{m} \cong \Fmr^\K$. Fix an $\ECp$-basis $\mathcal{B} = \{ {\sf b}_i: i
\in \NN \} $ of $\Fmr^\E$. Let 
\[ 
\begin{tikzcd}
\iota_\K \colon \Fmr^\K \rar[shift left, hook, ""] & \lar[shift left, two heads, ] \mr{Q}^\E/\mathfrak{m} \rcolon \pi_\K
\end{tikzcd}
\]
denote the $\KCp$-maps that split $\Fmr^\K$ from $\mr{Q}^\E/\mathfrak{m}$. We
will now show that the maps $\iota_K$ and $\pi_\K$ can be lifted to
$\ECp$-module maps $\iota_\E$ and $\pi_\E$  
\[ 
\begin{tikzcd}
\Fmr^\E  \dar[two heads,"\pi_1"' ] \rar[shift left, hook, dashed,  "\iota_\E"] & \lar[shift left, two heads, dashed, "\pi_\E"] \mr{Q}^\E \dar[two heads, "\pi_2"] \\
\Fmr^\K \rar[shift left, hook, "\iota_\K"] & \lar[shift left, two heads, "\pi_\K"] \mr{Q}^\E/\mathfrak{m}
\end{tikzcd}
\] 
such that $\pi_\E \circ \iota_\E$ is the identity. 

The $\ECp$-linear map $\iota_\E$ can be defined by sending ${\sf b}_i$ to an
arbitrary lift of $\iota_\K(\pi_1({\sf b}_i))$.  Since $\E_*$ is Noetherian,
$\Fmr^\E$ and $\mr{Q}^\E$ are both $\E_*$-free and $\iota_\K$ is injective, we
conclude  that
$\iota_\E$ is also injective (essentially from the Krull intersection theorem). The freeness of $\Fmr^\E$ and
$\mr{Q}^\E$ as $\E_*$-modules also implies the existence of an
$\E_*$-linear map $\pi_\E$ such that $\pi_\E \circ \iota_\E$ is the identity. By \Cref{relinj}, $\pi_\E$ can be chosen so that it is $\ECp$-linear.
\end{proof}
\begin{cor}
\label{ECPinfty-decomposition}
 For any $c \in \ZZ$, there exists an  $\ECp$-module isomorphism
\[ \E_*\CPinfty_c \cong \Fmr_c^\E \oplus \Mmr_c^\E,\] 
where  $\Fmr_c^\E$ is  free  and $\Mmr_c^\E$ is finitely generated  as $\ECp$-modules.
\end{cor}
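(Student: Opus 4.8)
The plan is to deduce \Cref{ECPinfty-decomposition} directly from \Cref{K-to-E-splitting} by taking $\mr{Q}^{\E} = \E_*\CPinfty_c$; the only work is to check that this choice meets the hypotheses of that lemma. Concretely, I need to verify: (i) $\E_*\CPinfty_c$ is an $\ECp$-module whose underlying $\E_*$-module is free; and (ii) modulo $\mfrak$ it admits a $\KCp$-module splitting into a free part and a finitely generated part.

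For (i), the $\ECp$-module structure is the one coming from the residual action of $\Cp \subset \Aut(\Gamma)$ on $\E\sma\CPinfty_c$: the $\EEO_*\E$-comodule structure $\Psi$ of \eqref{EEOaction}, together with the contragredient $\Cp$-action \eqref{eqn:action}, makes $\E_*\CPinfty_c$ a module over the twisted group ring $\ECpsigma = \pi_*(\EOmod(\E,\E))$ (see \Cref{twist}). Freeness of the underlying $\E_*$-module is a consequence of complex orientability of $\E$: the virtual bundle $c\,\upgamma$ over $\CPinfty$ is $\E$-orientable, so the $\E$-Thom isomorphism identifies $\E\sma\CPinfty_c$ with $\Sigma^{2c}\,\E\sma\CPinfty_+$ as $\E$-modules, and since $\HZ_*\CPinfty_+$ is a free abelian group concentrated in even degrees the collapsing Atiyah--Hirzebruch spectral sequence exhibits $\E_*\CPinfty_c$ as a free $\E_*$-module (on the classes $b_c, b_{c-1},\dots$ of \Cref{grK*CPinfty}).

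For (ii), note that $\CPinfty_c = \M(c\,\upgamma)$ is an even spectrum in the sense of \Cref{defn:even}: it is bounded below, and by the integral Thom isomorphism $\HZ_*\CPinfty_c \cong \Sigma^{2c}\HZ_*\CPinfty_+$, so $\HZ_{2i}\CPinfty_c \cong \ZZ$ for $i \ge c$ while all odd homology groups vanish. Hence $\E_*\CPinfty_c/\mfrak \cong \K_*\CPinfty_c$, and since the $\Cp$-action on $\K_*$ is trivial this is an isomorphism of $\ECp/\mfrak \cong \KCp$-modules (again \Cref{twist}). Now \Cref{KCPinfty-decomposition} supplies exactly the required $\KCp$-module splitting $\K_*\CPinfty_c \cong \Fmr_c^\K \oplus \Mmr_c^\K$ with $\Fmr_c^\K$ free and $\Mmr_c^\K$ finitely generated. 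Feeding all of this into \Cref{K-to-E-splitting} produces the $\ECp$-module decomposition $\E_*\CPinfty_c \cong \Fmr_c^\E \oplus \Mmr_c^\E$ claimed in \Cref{ECPinfty-decomposition}.

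I do not expect a genuine obstacle at this stage: all of the substantive content already lives in \Cref{K-to-E-splitting} (which lifts the splitting from the residue field using relative injectivity of free twisted group modules, \Cref{relinj}, together with a Krull-intersection argument for injectivity) and in \Cref{KCPinfty-decomposition}. The one point I would handle with a little care when writing the details is the compatibility of the two module structures modulo $\mfrak$ --- that the $\ECp$-action on $\E_*\CPinfty_c$ reduces precisely to the $\KCp$-action on $\K_*\CPinfty_c$ appearing in \Cref{KCPinfty-decomposition} --- and the $\E_*$-freeness of $\E_*\CPinfty_c$; both follow from $\CPinfty_c$ being an even spectrum.
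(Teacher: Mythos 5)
Your proof is exactly the intended one: the corollary follows by applying \Cref{K-to-E-splitting} with $\mr{Q}^\E = \E_*\CPinfty_c$, using \Cref{KCPinfty-decomposition} for the $\KCp$-splitting of $\mr{Q}^\E/\mfrak$, and the paper leaves this implicit. Your care in verifying the hypotheses is well placed --- in particular you have (correctly, and silently) read the hypothesis on $\mr{Q}^\E$ in \Cref{K-to-E-splitting} as ``$\ECp$-module whose underlying $\E_*$-module is free'' rather than ``free $\ECp$-module,'' which is what the proof of that lemma actually uses and what makes it nontrivial; the evenness of $\CPinfty_c$ supplies both the $\E_*$-freeness and the identification $\E_*\CPinfty_c/\mfrak \cong \K_*\CPinfty_c$ as $\KCp$-modules.
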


Next, we use the relative Adams spectral sequence for the map
$\EO \to \E$ to show that the algebraic splitting of
\Cref{ECPinfty-decomposition} originates from an  $\EO$-module splitting of  $\EO\sm \CPinfty_c$. 
\begin{defn}
An $\EO$-module $\mathcal{X}$ is \emph{relatively projective} (resp.
\emph{relatively free}) if $\EEO_*\mathcal{X}$ is a projective (resp. free)
$\E_*$-module. 
\end{defn}
\begin{ex}
The spectrum $\E$ is a relatively free $\EO$-module.
\end{ex}
\begin{ex} 
When $\Xmr$ is an even spectrum $\EO \sma \Xmr$ is
a relatively free $\EO$-module.

\end{ex} 
\begin{thm}[{\cite[Corollary 3.4]{Devinatz-homotopy-fixed-point-spectra}}]
\label{thm:rel-ASS} Suppose that $\mathcal{X}$
and $\mathcal{Y}$ are $\EO$-modules such that $\mathcal{X}$ is finite and relatively
projective. The Adams spectral sequence  relative to the map $\EO\to\E$
\begin{equation} \label{rel-ASS}
 \Emr^{s,t}_2 \coloneqq
    \Ext^{s,t}_{\EEO_*\E}(\EEO_*\mathcal{X},\EEO_*\mathcal{Y})
    \Rightarrow
    \pi_{t-s}\EOmod(\mathcal{X},\mathcal{Y}),
\end{equation}
is strongly convergent.
\end{thm}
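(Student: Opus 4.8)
The plan is to realize \eqref{rel-ASS} as the Adams spectral sequence built from the commutative ring object $\E$ inside the category $\EOmod$ of $\EO$-modules, and to extract both the $\Emr_2$-page and strong convergence from the formal properties of the associated $\E$-based cobar resolution. Concretely, I would set $\bar{\E}\coloneqq\fiber(\EO\to\E)$ in $\EOmod$, form the tower of $\EO$-modules
\[ \cdots\longrightarrow\bar{\E}^{\sma_{\EO}2}\sma_{\EO}\mathcal{Y}\longrightarrow\bar{\E}\sma_{\EO}\mathcal{Y}\longrightarrow\mathcal{Y}, \]
whose $s$-th cofiber is $\E\sma_{\EO}\bar{\E}^{\sma_{\EO}s}\sma_{\EO}\mathcal{Y}$, and apply $\pi_*\EOmod(\mathcal{X},-)$ to obtain the exact couple underlying \eqref{rel-ASS}. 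Everything then reduces to (i) identifying the resulting $\Emr_1$-complex with the cobar complex of $\EEO_*\E$ over $\E_*$, and (ii) proving strong convergence to $\pi_*\EOmod(\mathcal{X},\mathcal{Y})$. Together these two points are precisely \cite[Corollary 3.4]{Devinatz-homotopy-fixed-point-spectra}, so in practice the task is to check that $\mathcal{X}$ and $\mathcal{Y}$ meet the hypotheses recorded there; below I sketch the argument.

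For step (i) I would invoke the two finiteness hypotheses on $\mathcal{X}$. Since $\mathcal{X}$ is finite and relatively projective, $\E\sma_{\EO}\mathcal{X}$ is a dualizable $\E$-module and $\EEO_*\mathcal{X}$ is a finitely generated projective $\E_*$-module. The Galois identification of \Cref{diagram:BP_*BP-to-maps} exhibits $\EEO_*\E\cong\Map(\Cp,\E_*)$ as a free $\E_*$-module; hence a standard K\"unneth argument assembles the homotopy of the stages of the resolution into the reduced cobar complex of the $\E_*$-coalgebra $\EEO_*\E$, and applying $\pi_*\EOmod(\mathcal{X},-)$ together with the dualizability of $\E\sma_{\EO}\mathcal{X}$ converts this, termwise, into $\Hom_{\E_*}(\EEO_*\mathcal{X},-)$ with no higher correction terms (this is where projectivity of $\EEO_*\mathcal{X}$ is used). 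The cohomology of the resulting complex is, by definition, $\Ext$ in the category of $\EEO_*\E$-comodules — which by \Cref{twist} is equivalent to the category of $\ECpsigma$-modules — so the $\Emr_2$-page is $\Ext^{s,t}_{\EEO_*\E}(\EEO_*\mathcal{X},\EEO_*\mathcal{Y})$, as asserted.

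For step (ii), the crucial input is that $\EO\to\E$ is a \emph{finite} faithful Galois extension: because $\Cp$ acts faithfully on $\Gamma$ (this is the role of the hypothesis on the height of $\Gamma$, via \Cref{heightrestrict}), one has $\EO=\E^{\hCp}$ with $\E\sma_{\EO}\E\simeq\prod_{\Cp}\E$ (see \cite{Rognes-galois} and \Cref{diagram:BP_*BP-to-maps}). A finite Galois extension is descendable, so there is an integer $N$ with $\bar{\E}^{\sma_{\EO}N}\to\EO$ null-homotopic; smashing over $\EO$ with $\mathcal{Y}$ and applying $\EOmod(\mathcal{X},-)$, the composite of any $N$ consecutive maps in the tower is therefore zero. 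Consequently $\limone$ of the tower of homotopy groups vanishes and the spectral sequence acquires a horizontal vanishing line, $\Emr_\infty^{s,\ast}=0$ for $s\ge N$, which yields strong convergence to $\pi_\ast\EOmod(\mathcal{X},\mathcal{Y})$ with no hypothesis on $\mathcal{Y}$ at all. I expect this convergence step to be the only genuine obstacle: the $\Emr_2$-identification in step (i) is routine homological algebra once the hypotheses on $\mathcal{X}$ are in force, whereas (ii) rests on the nontrivial fact that $\E^{\hCp}\to\E$ is a \emph{finite} Galois extension and hence descendable.
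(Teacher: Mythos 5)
The paper does not prove this theorem; it cites it directly from Devinatz \cite{Devinatz-homotopy-fixed-point-spectra} (with Rognes \cite{Rognes-galois} reinterpreting the relevant input as the assertion that $\EO\to\E$ is a $\Cp$-Galois extension), and the surrounding prose merely records what those sources establish: the $\Emr_2$-identification with continuous group cohomology and a uniform horizontal vanishing line giving strong convergence. Your proposal acknowledges this — you also cite the same corollary of Devinatz — but then goes further and reconstructs the argument from scratch. That reconstruction is mathematically sound and is, in spirit, the modern Rognes--Mathew repackaging of Devinatz's original work: step~(i), using finiteness and relative projectivity of $\mathcal{X}$ so that $\E\sma_{\EO}\mathcal{X}$ is a dualizable $\E$-module with projective homotopy, makes the universal-coefficient step collapse and identifies the $\Emr_1$-term with the cobar complex of the Hopf algebroid $(\E_*,\EEO_*\E)$; step~(ii) invokes that a finite faithful Galois extension is descendable, so the canonical Adams tower $\{\bar{\E}^{\sma_{\EO}s}\}$ is pro-zero and the spectral sequence has a horizontal vanishing line independent of $\mathcal{Y}$. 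The one thing worth flagging is attributional rather than mathematical: the phrase ``descendable hence $\bar{\E}^{\sma_{\EO}N}\to\EO$ is null'' is really the lens of Mathew's later work on descent and nilpotence rather than something stated in that form in Devinatz; Devinatz's original proof of the vanishing line is more hands-on, using the structure of $\Aut(\Gamma)$. Your route is a legitimate and arguably cleaner way to obtain the same vanishing line, and it buys a conceptual explanation of why the hypotheses on $\mathcal{X}$ (but none on $\mathcal{Y}$) are needed; the paper's route (pure citation) buys brevity.
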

The relative Adams spectral sequence was developed by Baker and Lazarev
\cite{BakerLazarev}. Devinatz \cite{Devinatz-homotopy-fixed-point-spectra}
defined the homotopy fixed points $\E^{\hG}$ for $\mr{G}$ an arbitrary closed
subgroup of $\Aut(\Gamma)$. He identified the $\E_2$-page of the relative Adams
spectral sequence for $\E^{\hG}\to \E$ with the group cohomology of $\mr{G}$ and
showed that for every $\E^{\hG}$-module the relative Adams spectral sequence is
strongly convergent and that there is a uniform horizontal vanishing line
independent of the $\E^{\hG}$-module. Rognes \cite{Rognes-galois} developed the
theory of Galois extensions of ring spectra and reinterpreted the results of
Devinatz  by concluding that the map $\EO\to \E$ is Galois (see \cite[Theorem
5.4.4]{Rognes-galois}). Let
\[ \DEO(-) \coloneqq \EOmod(-,\EO) \colon \EO\text{-module} \to
\EO\text{-module} \] denote the relative Spanier-Whitehead dual functor.
\begin{prop}{\cite[Proposition 6.4.7]{Rognes-galois}}
\label{E-self-dual} $\DEO(\E) \simeq \E$.
\end{prop}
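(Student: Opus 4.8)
The plan is to establish the ``self-duality'' of the finite Galois extension $\EO\to\E$, which is proved in general as \cite[Proposition~6.4.7]{Rognes-galois}; I would reprove it here using the relative Adams spectral sequence of \Cref{thm:rel-ASS}. The Galois condition (\cite[Theorem~5.4.4]{Rognes-galois}) supplies all the inputs: $\E$ is a finite, relatively projective $\EO$-module with $\EEO_*\E\cong\Map(\Cp,\E_*)$ free of rank $p$ over $\E_*$; the extension is faithful; and, since $\E$ is dualizable over $\EO$, there is an equivalence $\E\sm_{\EO}\DEO(\E)\simeq\EOmod(\E,\E)$, whose homotopy is the twisted group ring $\ECpsigma$ of \Cref{twist}, again $\E_*$-free of rank $p$. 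In particular $\DEO(\E)$ is relatively free.

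First I would compute $\pi_*\DEO(\E)=\pi_*\EOmod(\E,\EO)$ via \Cref{thm:rel-ASS} with $\mathcal{X}=\E$ and $\mathcal{Y}=\EO$. One has $\EEO_*\EO\cong\E_*$, while $\EEO_*\E$, with its comodule structure over the Hopf algebroid $\EEO_*\E$, is the extended (cofree) comodule on $\E_*$; under the duality of \Cref{twist} identifying $\EEO_*\E$-comodules with $\ECpsigma$-modules this is just the free rank-one $\ECpsigma$-module. A change-of-rings isomorphism then identifies $\Ext^{s,t}_{\EEO_*\E}(\EEO_*\E,\E_*)$ with $\Ext^{s,t}_{\E_*}(\E_*,\E_*)$, which is $\E_*$ in filtration $s=0$ and vanishes for $s>0$; hence the spectral sequence collapses and $\pi_*\DEO(\E)\cong\E_*$.

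To upgrade this to an equivalence, I would run the same spectral sequence for $\pi_0\EOmod(\E,\DEO(\E))$. Since both $\EEO_*\E$ and $\EEO_*\DEO(\E)$ are cofree comodules on $\E_*$ (the latter because $\E\sm_{\EO}\DEO(\E)\simeq\EOmod(\E,\E)$ carries $\ECpsigma\cong\Map(\Cp,\E_*)$), the comodule isomorphism between them lies in $\Ext^0$, hence is a permanent cycle, and so is realized by an $\EO$-module map $\nu\colon\E\to\DEO(\E)$ inducing an isomorphism on $\EEO_*(-)$. A map of relatively free $\EO$-modules that is an isomorphism on $\EEO_*(-)$ is an equivalence: by naturality and strong convergence of \Cref{thm:rel-ASS}, applied with source $\mathcal{X}=\EO$, it induces an isomorphism $\pi_*\E\to\pi_*\DEO(\E)$. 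Thus $\DEO(\E)\simeq\E$. (One can instead exhibit $\nu$ at the outset as the adjoint of the trace pairing $\E\sm_{\EO}\E\xrightarrow{\mu}\E\xrightarrow{\tr}\EO$, where $\tr$ is the transfer $\E\to\E_{\mr{h}\Cp}\xrightarrow{\ \simeq\ }\E^{\hCp}=\EO$ of the Galois extension, and then use the computation above to see it is an equivalence.)

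The one step that is not purely formal --- and where the Galois hypothesis, not merely dualizability of $\E$, enters --- is the assertion that $\EEO_*\E$ and $\EEO_*\DEO(\E)$ are cofree comodules on $\E_*$. For $\E$ this is the Galois condition $\E\sm_{\EO}\E\simeq\prod_{\Cp}\E$ repackaged; for $\DEO(\E)$ it additionally requires matching up the correct one of the two $\E$-module structures on $\E\sm_{\EO}\DEO(\E)\simeq\EOmod(\E,\E)$ with the coaction. Everything else reduces to a change-of-rings computation and the standard ``isomorphism on $\EEO_*$ implies equivalence'' principle for relatively free $\EO$-modules.
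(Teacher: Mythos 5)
The paper does not prove this statement; it simply cites \cite[Proposition~6.4.7]{Rognes-galois}, where Rognes establishes self-duality of finite $G$-Galois extensions $A\to B$ by showing that the trace (transfer) map $\tr\colon B\to A$ makes $B\sm_A B\xrightarrow{\mu}B\xrightarrow{\tr}A$ a perfect pairing. Your spectral-sequence reproof is a genuinely different route, internal to the paper's toolkit; it is consistent (it invokes only \Cref{thm:rel-ASS}, a black-box input from Devinatz, and not \Cref{relASS-collapse}, which would be circular since that lemma uses \Cref{E-self-dual} for the case $\mathcal{X}=\E$). The trace-map sketch you add parenthetically at the end is in fact closer to Rognes's actual argument; the spectral-sequence version has the modest advantage of living entirely in the language the paper already sets up, while the trace pairing is more robust (it works without knowing beforehand that $\DEO(\E)$ is relatively free).

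Two comments on the details. First, your opening computation of $\pi_*\DEO(\E)\cong\E_*$ is not actually used in the closing argument: once you produce an $\EO$-module map $\nu\colon\E\to\DEO(\E)$ inducing an isomorphism on $\EEO_*(-)$, the ``iso on $\EEO_*$ of relatively free modules implies equivalence'' principle finishes the proof, so the first paragraph is redundant. Second, the subtlety you flag in the last paragraph --- that the $\EEO_*\E$-comodule structure on $\EEO_*\DEO(\E)\cong\ECpsigma$ is cofree --- is the crux and is only asserted, not proved. What makes it work is that $\Cp$ is finite, so the twisted group ring $\ECpsigma$ is a Frobenius $\E_*$-algebra and hence self-dual: one must check that the evaluation equivalence $\E\sm_{\EO}\DEO(\E)\simeq\EOmod(\E,\E)$ carries the coaction (coming from the \emph{left} $\E$-factor) to the comultiplication on $\ECpsigma$ dual to multiplication on $\Map(\Cp,\E_*)$, and that this makes $\ECpsigma$ an extended comodule. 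This is precisely where the Galois hypothesis, not just dualizability, is consumed; as written the step is a gap, though a fillable one.
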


\begin{lem} \label{relASS-collapse}
The edge homomorphism for the relative Adams spectral sequence
\begin{equation} \label{edge}
\begin{tikzcd}
\pi_\ast \EOmod(\mathcal{X}, \mathcal{Y}) \rar &  \EEOEcomod(\EEO_*\mathcal{X},\EEO_*\mathcal{Y})
\end{tikzcd}
\end{equation}
 is an isomorphism if
\begin{enumerate}[(I)]
\item \label{relASS-collapseI}  $\mathcal{Y}= \E$ and $\mathcal{X}$ is a finite relatively-projective $\EO$-module, or if
\item $\mathcal{X} = \E$ and $\mathcal{Y}$ is an arbitrary $\EO$-module. 
\end{enumerate}
\label{rel-ASS-collapse}
\end{lem}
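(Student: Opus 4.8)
The plan is to show that, in each case, the relative Adams spectral sequence \eqref{rel-ASS} converges strongly and has $\Emr_2^{s,*}=0$ for $s>0$; the edge homomorphism \eqref{edge} is then forced to be an isomorphism onto $\Emr_2^{0,*}=\EEOEcomod(\EEO_*\mathcal{X},\EEO_*\mathcal{Y})$. Strong convergence is provided by \Cref{thm:rel-ASS}: in case (I) its hypotheses are exactly the assumption, and in case (II) they hold with $\mathcal{X}=\E$, since $\EO\to\E$ is a finite $\Cp$-Galois extension and hence $\E$ is a dualizable — i.e.\ ``finite'' — $\EO$-module (already implicit in $\DEO(\E)\simeq\E$, \Cref{E-self-dual}), and $\E$ is relatively free because $\EEO_*\E\cong\Map(\Cp,\E_*)$ is a free $\E_*$-module.

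The vanishing $\Emr_2^{s,*}=0$ for $s>0$ comes from one structural fact. Because $\E\simeq\E\sm_{\EO}\EO$ is relatively free, $\EEO_*\E$ is the extended $\EEO_*\E$-comodule built from the $\E_*$-module $\E_*=\EEO_*\EO$; equivalently, under the identification of $\EEO_*\E$-comodules with modules over the twisted group ring $\ECpsigma$ (\Cref{twist}), it is the rank-one free module $\ECpsigma$. Now $\EEO_*\E=\Map(\Cp,\E_*)$ is finite and free over $\E_*$, and $\ECpsigma$ is a Frobenius $\E_*$-algebra (as is any twisted group algebra of a finite group), so induction and coinduction along $\E_*\to\ECpsigma$ agree; hence an extended comodule is at once relatively projective and relatively injective for the forgetful functor to $\E_*$-modules, which is the relative homological algebra that the cobar complex underlying \eqref{rel-ASS} computes. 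Consequently $\Ext^s_{\EEO_*\E}(-,\EEO_*\E)=0$ and $\Ext^s_{\EEO_*\E}(\EEO_*\E,-)=0$ for every $s>0$. In case (I) the first vanishing applies, with target $\mathcal{Y}=\E$; in case (II) the second applies, with source $\mathcal{X}=\E$. Either way the spectral sequence degenerates at $\Emr_2$ and \eqref{edge} is an isomorphism. As a check on case (II), \Cref{E-self-dual} together with dualizability of $\E$ gives $\EOmod(\E,\mathcal{Y})\simeq\DEO(\E)\sm_{\EO}\mathcal{Y}\simeq\E\sm_{\EO}\mathcal{Y}$, so $\pi_*\EOmod(\E,\mathcal{Y})\cong\EEO_*\mathcal{Y}$, matching $\EEOEcomod(\EEO_*\E,\EEO_*\mathcal{Y})\cong\EEO_*\mathcal{Y}$.

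I expect the main difficulty to be organizational rather than conceptual: one has to set up the relative homological algebra of the comodule category of the quotient Hopf algebroid $\EEO_*\E\cong\Map(\Cp,\E_*)$ precisely enough to confirm that the $\Emr_2$-term of \eqref{rel-ASS} is the \emph{relative} $\Ext$ (relative to $\E_*$-modules) — for which extended comodules, and so $\EEO_*\E$ itself, are acyclic both as source and as target — and then to verify that $\Emr_2^{0,*}$ is identified with $\EEOEcomod(\EEO_*\mathcal{X},\EEO_*\mathcal{Y})$ compatibly with the topologically defined edge map of the $\Tot$-tower. One should also pin down that ``finite'' in \Cref{thm:rel-ASS} means ``dualizable'', which is what makes $\mathcal{X}=\E$ a legitimate input in case (II).
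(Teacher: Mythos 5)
Your proof is correct, and for case (II) it takes a genuinely different route than the paper. For case (I) both you and the paper argue the same way: $\EEO_*\E$ is a cofree, hence relatively injective, $\EEO_*\E$-comodule, so $\Ext^s_{\EEO_*\E}(-,\EEO_*\E)$ vanishes for $s>0$. For case (II) the paper uses the self-duality $\DEO(\E)\simeq\E$ (\Cref{E-self-dual}) to rewrite $\EOmod(\E,\mathcal{Y})\simeq\E\sm_{\EO}\mathcal{Y}$, then invokes the K\"unneth isomorphism $\EEO_*(\E\sm_{\EO}\mathcal{Y})\cong\EEO_*\E\otimes_{\E_*}\EEO_*\mathcal{Y}$ to see that the coefficients are cofree and hence that the spectral sequence is concentrated on the zero line; the collapse is thus obtained by trading the SS for the one with trivial source and cofree target. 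You instead show the vanishing of $\Ext^s_{\EEO_*\E}(\EEO_*\E,-)$ for $s>0$ directly, by observing that under the equivalence between $\EEO_*\E$-comodules and $\ECpsigma$-modules, $\EEO_*\E$ corresponds to the rank-one free module, and since $\ECpsigma$ is a Frobenius $\E_*$-algebra this module is relatively projective as well as relatively injective. Your Frobenius route is more uniform (both vanishings come from one structural fact) and avoids matching two spectral sequences via duality, at the cost of spelling out the Frobenius property and the comodule--module duality; the paper's route needs only the standard relative-injective vanishing but has to verify that the duality and K\"unneth identifications are compatible with the edge map. Both approaches correctly rely on Devinatz/Rognes for strong convergence, and you are right to flag that in case (II) one must know $\E$ is dualizable (``finite'') and relatively free over $\EO$, both of which follow from the Galois extension property — your appeal to $\DEO(\E)\simeq\E$ and $\EEO_*\E\cong\Map(\Cp,\E_*)$ settles this.
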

\begin{proof} 
When $\mathcal{Y}=\E$ and $\mathcal{X}$ is finite, the edge map \eqref{edge} is
an isomorphism, as $\EEO_*\E$ is a cofree $\EEO_*\E$-comodule and the spectral
sequence \eqref{rel-ASS} is concentrated in the zero line.

When $\mathcal{X}=\E$  is a finite self-dual $\EO$-module (see
\Cref{E-self-dual}) and therefore, 
 \[
    \mathcal{X} \sm_{\EO} \mathcal{Y}
    \simeq \DEO(\mathcal{X})\sm_{\EO}\mathcal{Y}
    \simeq \EOmod(\mathcal{X},\mathcal{Y}).
\]
Since $\EEO_*\E$ is $\E_*$-free, we have a Kunneth isomorphism
\[
\EEO_*(\DEO(\E) \sm_{\EO} \mathcal{Y}) \cong \EEO_*(\E\sm_{\EO} \mathcal{Y}) \cong \EEO_*(\E) \otimes_{\E_*} \EEO_*\mathcal{Y}
\] 
and consequently $\EEO_*(\DEO(\E) \sm_{\EO} \mathcal{Y})$ is cofree as an
$\EEO_*\E$-comodule. Therefore, the spectral sequence \eqref{rel-ASS} is
concentrated on the zero line and the edge homomorphism \Cref{edge} is an
equivalence as desired.
\end{proof}

As a consequence of the convergence of the relative Adams spectral sequence, we
deduce that $\KEO_*(-)$ detects equivalences of $\EO$-modules:
\begin{cor}
\label{KEO-detects-equivalences}
Suppose that $\mathcal{X}$ and $\mathcal{Y}$ are relatively projective
$\EO$-modules. An $\EO$-module map 
\[ 
\begin{tikzcd}
f\colon \mathcal{X}\rar & \mathcal{Y}
\end{tikzcd}
\]
is a weak equivalence if and only if
\[
\begin{tikzcd}
 f_*\colon \KEO_*\mathcal{X} \rar["\cong"] & \KEO_*\mathcal{Y}
\end{tikzcd}
\] is a  $\K_*$-isomorphism. 
\end{cor}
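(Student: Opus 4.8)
The plan is to deduce everything from the cofiber sequence of $f$ together with the strong convergence of the relative Adams spectral sequence. The forward implication is formal: if $f$ is a weak equivalence then so is $\K\sma_{\EO}f$, hence $f_*$ is a $\K_*$-isomorphism. For the converse, set $\mathcal{C}\coloneqq\cofiber(f)$, giving a cofiber sequence $\mathcal{X}\xrightarrow{f}\mathcal{Y}\to\mathcal{C}$ of $\EO$-modules; since $\K\sma_{\EO}(-)$ and $\E\sma_{\EO}(-)$ are exact, the hypothesis that $f_*$ is an isomorphism is equivalent to $\KEO_*\mathcal{C}=0$, and it suffices to show this forces first $\EEO_*\mathcal{C}=0$ and then $\mathcal{C}\simeq 0$.

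First I would pass from $\K$ to $\E$. Because $\mathcal{X}$ and $\mathcal{Y}$ are relatively projective, $\EEO_*\mathcal{X}$ and $\EEO_*\mathcal{Y}$ are flat $\E_*$-modules; and since $\K=\E/\mfrak$ is built from $\E$ by a finite tower of cofiber sequences realizing the quotient by the regular sequence generating $\mfrak$, this flatness makes the relevant K\"unneth spectral sequence collapse, yielding natural isomorphisms $\KEO_*\mathcal{X}\cong\EEO_*\mathcal{X}/\mfrak$ and $\KEO_*\mathcal{Y}\cong\EEO_*\mathcal{Y}/\mfrak$ compatible with $f$. So $\KEO_*\mathcal{C}=0$ says precisely that $\EEO_*f$ is an isomorphism modulo $\mfrak$. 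The modules $\EEO_*\mathcal{X}$ and $\EEO_*\mathcal{Y}$ are $\mfrak$-adically complete (pro-free) $\E_*$-modules, so I would promote this to an honest isomorphism by a Nakayama argument: induct up the tower $\{(-)/\mfrak^{\,j}\}$ using flatness and the five lemma, then pass to the inverse limit (the relevant $\limone$ vanishing because the tower is surjective). Hence $\EEO_*f$ is an isomorphism and $\EEO_*\mathcal{C}=0$.

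Next I would pass from $\E$ to the underlying spectrum using \Cref{thm:rel-ASS}. Apply the relative Adams spectral sequence with the finite relatively projective $\EO$-module $\EO$ itself in the first slot (so $\EEO_*\EO\cong\E_*$) and with $\mathcal{C}$ in the second; its $\Emr_2$-page is $\Ext^{s,t}_{\EEO_*\E}(\E_*,\EEO_*\mathcal{C})$, which is identically zero since $\EEO_*\mathcal{C}=0$. Strong convergence then gives $\pi_*\mathcal{C}=\pi_*\EOmod(\EO,\mathcal{C})=0$, so $\mathcal{C}\simeq 0$ and $f$ is a weak equivalence. (Alternatively one could conclude $\mathcal{C}\simeq 0$ from $\EEO_*\mathcal{C}=0$ directly from faithfulness of $\E$ over $\EO$, as $\EO\to\E$ is Galois.) The step I expect to be the main obstacle is the Nakayama argument in the middle paragraph: over $\E_*$ the implication ``isomorphism mod $\mfrak$ $\Rightarrow$ isomorphism'' genuinely fails for infinitely generated modules, so the argument leans on the fact that the $\EEO_*$ of a relatively projective $\EO$-module is pro-free (equivalently, on the $\mfrak$-completeness built into the relevant module categories); once this is granted, the rest is routine and the relative Adams spectral sequence supplies the final descent from $\EEO_*$ to $\pi_*$.
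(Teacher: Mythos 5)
Your argument is essentially the paper's: both prove the converse by first promoting the $\KEO_*$-isomorphism to an $\EEO_*$-isomorphism via Nakayama, and then invoking strong convergence of the relative Adams spectral sequence to get an equivalence. You are somewhat more careful in two spots the paper treats tersely: you apply \Cref{thm:rel-ASS} with the finite relatively projective module $\EO$ in the first variable to compute $\pi_*\mathcal{C}$ (the hypotheses of \Cref{thm:rel-ASS} do not literally apply to an arbitrary relatively projective $\mathcal{X}$), and you correctly flag that Nakayama for infinitely generated $\E_*$-modules needs pro-freeness/$\mfrak$-completeness of $\EEO_*$ rather than bare projectivity, which is the structural input being used implicitly in the paper's one-line ``Nakayama'' step.
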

\begin{proof}
By the Nakayama lemma, if $f_*\colon \KEO_*\mathcal{X}\to\KEO_*\mathcal{Y}$ is
an isomorphism, then so is $f_*\colon \EEO_*\mathcal{X}\to\EEO_*\mathcal{Y}$.
Thus $f$ induces an isomorphism of relative Adams $\E_2$-pages. It follows that
$f$ induces an isomorphism on $\pi_*$. The converse is obvious. 
\end{proof}
\begin{defn}
We say an  $\EO$-module $\mathcal{X}$ is \emph{freely filtered} if there is a
filtration
\[ \mathcal{X}^{(0)}\to \mathcal{X}^{(1)}\to \cdots\to \mathcal{X}\]
such that
\begin{enumerate}
\item $\mathcal{X}^{(0)}$ is contractible,
\item each $\mathcal{X}^{(d)}$ is a compact relatively free $\EO$-module, 
\item the map $\mathcal{X}^{(d)} \to \mathcal{X}^{(d+1)}$ is an $\EO$-module map, and
\item $\colim_{d} \mathcal{X}^{(d)} \simeq \mathcal{X}$.
\end{enumerate}
\end{defn}
\begin{ex} 
For any even spectrum $\Xmr$ of finite type, $\EO \sma \Xmr$ is a freely filtered
$\EO$-module where the filtration is induced by the skeletal filtration of
$\Xmr$. 
\end{ex}

\begin{lem}
\label{EO-splitting-free}
Suppose that  $\mathcal{X}$ is a freely filtered  $\EO$-module with a  $\KCp$-module splitting
\[\KEO_*(\mathcal{X})/\mfrak \cong \Fmr^\K\oplus \Mmr^\K,\] where $\Fmr$ is 
free with countable basis. Then there exists an $\EO$-module splitting
\[ \mathcal{X}\simeq \Fcal\vee \Mcal\] 
where $\Fcal$ is a free $\E$-module such that $\KEO_*\Fcal \cong \Fmr$ and
$\Mcal$ is an $\EO$-module such that $\KEO_*\Mcal\cong \Mmr$.
\end{lem}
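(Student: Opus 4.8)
The plan is to realize the algebraic free summand $\Fmr^\K$ topologically by an (infinite) wedge of shifted copies of $\E$, embed it into $\mathcal X$ so as to realize that summand, and then split it off by producing an explicit retraction; the complement will be the cofiber of the embedding, which automatically carries the Morava $\K$-homology $\Mmr^\K$. The one genuinely delicate point is that the free $\E$-module is an \emph{infinite} wedge, so maps into it must be analysed through the skeletal filtration of $\mathcal X$, where a $\limone$-obstruction appears and has to be killed.

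First I would lift the splitting to $\E$-homology. Since $\mathcal X$ is freely filtered, $\EEO_*\mathcal X\cong\colim_d\EEO_*\mathcal X^{(d)}$ is a free $\E_*$-module and each inclusion $\EEO_*\mathcal X^{(d)}\hookrightarrow\EEO_*\mathcal X^{(d+1)}$ is split over $\E_*$ (the filtration quotients being relatively free). Re-running the argument of \Cref{K-to-E-splitting} — which uses only $\E_*$-freeness of the ambient module together with the relative injectivity of free $\ECpsigma$-modules (\Cref{relinj}) — the hypothesized $\KCp$-module splitting $\KEO_*\mathcal X\cong\Fmr^\K\oplus\Mmr^\K$ lifts to an $\ECpsigma$-module splitting $\EEO_*\mathcal X\cong\Fmr^\E\oplus\Mmr^\E$ with $\Fmr^\E$ free and $\Fmr^\E/\mfrak\cong\Fmr^\K$. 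Writing $\Fmr^\E=\bigoplus_{i\in\NN}\Sigma^{n_i}\ECpsigma$, set $\Fcal:=\bigvee_{i}\Sigma^{n_i}\E$; this is a free $\E$-module with $\EEO_*\Fcal\cong\Fmr^\E$ and $\KEO_*\Fcal\cong\Fmr^\K$, and $\EEO_*\Fcal$ is cofree as an $\EEO_*\E$-comodule, just as $\EEO_*\E$ is.

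Next comes the embedding and its retraction. Because $\E$ is compact, self-dual (\Cref{E-self-dual}) and relatively free, $\EOmod(\E,\mathcal X)\simeq\E\sm_{\EO}\mathcal X$, so $\pi_0\EOmod(\Fcal,\mathcal X)\cong\prod_{i}\EEO_{n_i}\mathcal X\cong\Hom_{\ECpsigma}(\EEO_*\Fcal,\EEO_*\mathcal X)$ (using that $\Fcal$ is a wedge and that homotopy commutes with products); I take $\alpha\colon\Fcal\to\mathcal X$ to be the map corresponding to the split inclusion $\Fmr^\E\hookrightarrow\EEO_*\mathcal X$, so $\alpha_*$ is a split monomorphism on $\EEO_*$ and hence on $\KEO_*$. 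For the retraction, write $\mathcal X\simeq\hocolim_d\mathcal X^{(d)}$, so $\EOmod(\mathcal X,\Fcal)\simeq\holim_d\EOmod(\mathcal X^{(d)},\Fcal)$. Each $\mathcal X^{(d)}$ is finite and relatively projective and $\EEO_*\Fcal$ is cofree, so the relative Adams spectral sequence (\Cref{thm:rel-ASS}) collapses onto the zero line exactly as in the proof of \Cref{relASS-collapse}, giving $\pi_*\EOmod(\mathcal X^{(d)},\Fcal)\cong\Hom_{\EEO_*\E}(\EEO_*\mathcal X^{(d)},\EEO_*\Fcal)$. Since each $\EEO_*\mathcal X^{(d)}$ is an $\E_*$-summand of $\EEO_*\mathcal X$ and $\EEO_*\Fcal$ is cofree, every comodule map out of a finite stage extends over $\EEO_*\mathcal X$; hence the tower $\{\Hom_{\EEO_*\E}(\EEO_*\mathcal X^{(d)},\EEO_*\Fcal)\}_d$ has surjective transition maps, is Mittag–Leffler, and its $\limone$ vanishes, so the Milnor sequence yields $\pi_0\EOmod(\mathcal X,\Fcal)\cong\Hom_{\EEO_*\E}(\EEO_*\mathcal X,\EEO_*\Fcal)$. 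Let $r\colon\mathcal X\to\Fcal$ correspond to the projection $\EEO_*\mathcal X=\Fmr^\E\oplus\Mmr^\E\twoheadrightarrow\Fmr^\E$; the same analysis with $\Fcal$ in place of $\mathcal X$ gives $\pi_0\EOmod(\Fcal,\Fcal)\cong\End_{\EEO_*\E}(\EEO_*\Fcal)$, and since $(r\circ\alpha)_*=\id$ there, $r\circ\alpha\simeq\id_\Fcal$.

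Finally, a retraction of $\alpha$ splits the cofiber sequence $\Fcal\xrightarrow{\alpha}\mathcal X\to\Mcal$ with $\Mcal:=\cofiber(\alpha)$, so $\mathcal X\simeq\Fcal\vee\Mcal$; and since $\alpha_*$ is a split monomorphism on $\KEO_*$, the long exact sequence gives $\KEO_*\Mcal\cong\KEO_*\mathcal X/\Fmr^\K\cong\Mmr^\K$, as required. I expect the crux to be the retraction step: one must commit to resolving maps into the infinite wedge $\Fcal$ through the tower of finite skeleta of $\mathcal X$ and then show the $\limone$-term vanishes — the decisive point being that the finite stages sit inside $\EEO_*\mathcal X$ as $\E_*$-summands, so that maps to the cofree comodule $\EEO_*\Fcal$ always extend and the relevant $\Hom$-tower is Mittag–Leffler. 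This is where the "freely filtered" hypothesis is used in full.
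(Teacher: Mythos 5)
Your argument is correct and reaches the same conclusion, but it proceeds by a genuinely different strategy than the paper. The paper's proof is a one-copy-at-a-time induction: it picks out a single basis element of the free $\KCp$-summand, observes that the corresponding $\KCp$-cyclic summand factors through a finite stage $\mathcal X^{(\ell)}$ by compactness, lifts via \Cref{K-to-E-splitting} to an $\ECpsigma$-split copy of $\EEO_*\E$ in $\EEO_*\mathcal X^{(\ell)}$, realizes the split inclusion and retraction as $\EO$-module maps using \Cref{relASS-collapse}, peels off a single $\E$, and iterates on the cofiber. You instead build the entire wedge $\Fcal=\bigvee_i\Sigma^{n_i}\E$ in one stroke, produce $\alpha\colon\Fcal\to\mathcal X$ using compactness and self-duality of the individual $\E$'s together with the universal property of the wedge, and then construct the retraction $r\colon\mathcal X\to\Fcal$ by resolving $\EOmod(\mathcal X,\Fcal)\simeq\holim_d\EOmod(\mathcal X^{(d)},\Fcal)$ through the skeletal tower, killing the $\limone$ obstruction with a Mittag--Leffler argument. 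Your route has a real advantage: the paper's closing ``our results follow from an inductive argument'' silently involves a countable (transfinite) induction and an unstated passage to the limit, where the same $\limone$ issue you isolate must implicitly be addressed; you make that obstruction explicit and dispose of it cleanly via the cofreeness of $\EEO_*\Fcal$ and the extension property for maps out of the finite stages. The paper's route has the advantage of staying entirely within finite, compact objects at each step and of invoking \Cref{relASS-collapse} exactly as stated; your use of the relative Adams spectral sequence with target $\Fcal$ (rather than $\E$) is a mild but genuine extension of \Cref{relASS-collapse}(\ref{relASS-collapseI}), which you correctly note works because $\EEO_*\Fcal$ is still cofree. One point worth flagging in both treatments: your Mittag--Leffler argument, and the paper's implicit convergence, both lean on the maps $\EEO_*\mathcal X^{(d)}\to\EEO_*\mathcal X^{(d+1)}$ being split $\E_*$-injections, which the bare definition of ``freely filtered'' does not literally assert (it asserts only that each stage is relatively free, not that the filtration quotients are); this is true in the intended example of the skeletal filtration of $\EO\sm\Xmr$ for $\Xmr$ even, and appears to be the intended reading of the hypothesis, but it is a gap shared with the paper rather than one introduced by you.
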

\begin{proof} 
Let us present the free $\KCp$-module $\Fmr$ as   
\[ \Fmr^\K := \bigoplus_{\mathcal{B}}\KCp \cdot {\sf b}_i  \]
using a basis 
$\mathcal{B} := \{ {\sf b}_i: 0 \leq i < n \leq \infty \}$. 
Let $\Fmr_i^\K := \KCp \cdot {\sf b}_i \subset \Fmr^\K$. Let $\iota^\K$ and
$\pi^\K$ denote the inclusion and the projection map for the split-summand
corresponding to $\Fmr_0^\K$. Because $\Fmr_0^\K$ is a finite $\KCp$-module,
there exists a solution to the diagram of $\KCp$-modules 
\begin{equation} \label{solutionK}
\begin{tikzcd}
\Fmr_0^\K \dar[dashed, shift right, "\iota^\K_{\ell}"'] \rar[equal]&\Fmr_0^\K \rar[equal] \dar[dashed, shift right, "\iota^\K_{\ell+1}"'] & \dots  \rar[equal]& \Fmr_0^\K \ar[d,shift right,"\iota^\K"'] \\
\KEO_*\mathcal{X}^{(\ell)} \rar \ar[u, shift right,  "\pi^\K_{\ell}"']  &\KEO_*\mathcal{X}^{(\ell+1)} \rar \ar[u, shift right,  "\pi^\K_{\ell+1}"']&\dots \rar & \KEO_*\mathcal{X} \ar[u, shift right, "\pi^\K"'] 
\end{tikzcd}
 \end{equation}
for some $\ell \gg 0$. Using \Cref{K-to-E-splitting}, we can extend
\Cref{solutionK} to a diagram of $\ECp$-modules (or equivalently
$\EEO_*\E$-comodules) 
\begin{equation} \label{solutionE}
\begin{tikzcd}
\EEO_*\E \dar[ shift right, "\iota^\E_{\ell}"'] \rar[equal]&\EEO_*\E \rar[equal] \dar[ shift right, "\iota^\E_{\ell+1}"'] & \dots  \rar[equal]& \EEO_*\E\ar[d,shift right,"\iota^\E"'] \\
 \EEO_*\mathcal{X}^{(\ell)} \rar \ar[u, shift right, "\pi^\E_{\ell}"']  &\EEO_*\mathcal{X}^{(\ell+1)} \rar \ar[u, shift right,  "\pi^\E_{\ell+1}"']&\dots \rar & \EEO_*\mathcal{X} \ar[u, shift right, "\pi^\E"'].
\end{tikzcd}
\end{equation}
By \Cref{relASS-collapse}, the maps $\iota_{j}^\E$ and $\pi_{j}^\E$ can be
realized in the homotopy category of $\EO$-modules 
\[ 
\begin{tikzcd}
\upiota_j\colon \E \rar[shift right] & \lar[shift right] \mathcal{X}^{(j)} \rcolon \uppi_i
\end{tikzcd}
\]
for all $i \geq \ell$. Since $\colim_i \mathcal{X}^{(i)} \simeq \mathcal{X}$,
we conclude that there exists an $\EO$-module $\mathcal{M}$ such that 
\[ \KEO_* \mathcal{M} \cong \bigoplus_{i =1}^{n-1}\KCp \cdot {\sf b}_i  \oplus \Mmr^\K  \]
 and an $\EO$-module equivalence
 \[ \mathcal{X} \simeq \E \vee \Mcal. \]
Note that $\mathcal{M}$ can be filtered as 
\[ 
 \Mcal^{(0)}\to \Mcal^{(1)}\to \cdots\to \Mcal,
\]
where $\Mcal^{(i)} = \cofiber(\upiota_{ \ell + i})$ and $\EEO_*\Mcal^{(i)}$
is $\E_*$-free. Thus our results follow from an inductive argument. 
\end{proof}
Combining \Cref{KCPinfty-decomposition} and \Cref{EO-splitting-free}, we get:  
\begin{cor} \label{splittingEOCPc}For any $c \in \ZZ$, there exists an $\EO$-module splitting 
\[ \EO \sma \CP^\infty_c \simeq \mathcal{F}_c \vee \mathcal{M}_c,\]
where $\mathcal{F}_c \simeq \bigvee_{\NN} \E$ and $\mathcal{M}_c$ is a compact $\EO$-module. 
\end{cor}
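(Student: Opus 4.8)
The plan is to apply \Cref{EO-splitting-free} to the $\EO$-module $\mathcal{X} = \EO \sm \CPinfty_c$, feeding it the algebraic decomposition of \Cref{KCPinfty-decomposition} as input. First I would verify the hypotheses of \Cref{EO-splitting-free}. The spectrum $\CPinfty_c$ is the Thom spectrum of $c\upgamma$ over $\CPinfty$; since $c\upgamma$ is $\HZ$-orientable, $\HZ_*\CPinfty_c$ is concentrated in even degrees and finitely generated in each, so $\CPinfty_c$ is an even spectrum in the sense of \Cref{defn:even}. Consequently, as recorded in the example just before \Cref{EO-splitting-free}, $\EO \sm \CPinfty_c$ is freely filtered, with the filtration induced by the skeletal filtration $\{\CP^{a+c}_c\}_a$ of $\CPinfty_c$.

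Next I would identify the relevant $\KCp$-module. Base change gives $\K \sm_{\EO}(\EO \sm \CPinfty_c) \simeq \K \sm \CPinfty_c$, so $\KEO_*\mathcal{X} \cong \K_*\CPinfty_c$ with the $\KCp$-module structure analysed in \Cref{sec:Cp-action}; as $\mfrak$ acts as zero on $\K_*$ this already equals $\KEO_*\mathcal{X}/\mfrak$. By \Cref{KCPinfty-decomposition} there is a $\KCp$-module splitting $\K_*\CPinfty_c \cong \Fmr_c^\K \oplus \Mmr_c^\K$ with $\Fmr_c^\K$ free — and of countable rank, since its associated graded is free on the countable set $\{b_{pi}\}$ exhibited in \Cref{grK*CPinfty} — and $\Mmr_c^\K$ finitely generated over $\KCp$. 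Feeding this into \Cref{EO-splitting-free} produces an $\EO$-module equivalence $\EO \sm \CPinfty_c \simeq \mathcal{F}_c \vee \mathcal{M}_c$ with $\mathcal{F}_c$ a free $\E$-module such that $\KEO_*\mathcal{F}_c \cong \Fmr_c^\K$ and $\mathcal{M}_c$ an $\EO$-module with $\KEO_*\mathcal{M}_c \cong \Mmr_c^\K$; since $\Fmr_c^\K$ has countable basis, $\mathcal{F}_c \simeq \bigvee_{\NN}\E$.

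The only point demanding an argument rather than a citation — and it is a modest one — is the compactness of $\mathcal{M}_c$. Since $\KCp = \K_*[\Cp]$ is free of rank $p$ over $\K_*$, the module $\Mmr_c^\K$ is finitely generated, hence Noetherian, over $\K_*$. From the construction inside \Cref{EO-splitting-free}, $\mathcal{M}_c = \colim_i \mathcal{M}_c^{(i)}$ with each $\mathcal{M}_c^{(i)}$ a compact relatively free $\EO$-module and each transition map induced by a skeletal inclusion, hence injective on $\KEO_*$; thus the $\K_*$-submodules $\KEO_*\mathcal{M}_c^{(i)} \subseteq \KEO_*\mathcal{M}_c$ form an ascending chain in a Noetherian module and therefore stabilize. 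For $i$ large, $\mathcal{M}_c^{(i)} \to \mathcal{M}_c^{(i+1)}$ is then a $\KEO_*$-isomorphism between relatively projective $\EO$-modules, hence an equivalence by \Cref{KEO-detects-equivalences}, and so $\mathcal{M}_c \simeq \mathcal{M}_c^{(N)}$ is compact. All the substance of the corollary thus sits in \Cref{KCPinfty-decomposition} and \Cref{EO-splitting-free}; the only genuine checks are the identification of the $\KCp$-action on $\KEO_*(\EO \sm \CPinfty_c)$ with the one of \Cref{sec:Cp-action} and this Noetherian-stabilization step that yields compactness.
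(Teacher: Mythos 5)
Your proposal follows the same route as the paper, whose proof of \Cref{splittingEOCPc} is literally the one sentence ``Combining \Cref{KCPinfty-decomposition} and \Cref{EO-splitting-free}, we get:\ \dots''. You verify the hypotheses of \Cref{EO-splitting-free} (evenness of $\CPinfty_c$, hence freely filtered; $\KEO_*(\EO\sm\CPinfty_c)\cong\K_*\CPinfty_c$ with the $\KCp$-structure of \Cref{sec:Cp-action}; countable rank of the free part from \Cref{grK*CPinfty}) exactly as intended.

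Where you add genuine content is the compactness of $\Mcal_c$. As stated, \Cref{EO-splitting-free} concludes only that $\Mcal$ is an $\EO$-module with $\KEO_*\Mcal\cong\Mmr$; it does \emph{not} assert compactness, so when $\Mmr_c^\K$ is finite this does require a separate argument, which the paper leaves implicit. Your Noetherian stabilization of the ascending chain $\KEO_*\Mcal_c^{(i)}$ inside the finite $\K_*$-module $\Mmr_c^\K$, capped off by \Cref{KEO-detects-equivalences}, is the right idea. The one place to phrase more carefully is the filtration itself: the objects $\Mcal^{(i)}=\cofiber(\upiota_{\ell+i})$ in the proof of \Cref{EO-splitting-free} are cofibers of a single $\E$ mapping into the skeletal stages, not skeletal subobjects of $\Mcal$, and the final $\Mcal_c$ appears only after iterating the one-$\E$-at-a-time split; so the exhaustive filtration of $\Mcal_c$ by compact relatively free $\EO$-modules with transition maps injective on $\KEO_*$ needs to be extracted from that iteration (for instance by packaging the inclusion $\Fcal_c\to\EO\sm\CPinfty_c$ as a single split $\EO$-module map and taking cofibers of its skeletal truncations) rather than read off as a skeletal inclusion. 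With that gloss your argument is correct and supplies a step the paper does not spell out.
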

\Cref{main2} is the case $c=0$ in \Cref{splittingEOCPc}. 
\subsection{An $\EO$-Thom isomorphism} 
\label{sub:EOThom}
Recall from from the proof of \Cref{grK*CPinfty} that the summand  $\Mmr_{0}^{ \
\gr\K}$ of  $\gr\K_*\CP^\infty_+$ factors through $\gr\K_*
\CP^{\hat{\upbeta}_k}_+$, where \[ \hat{\upbeta}_k :=p^{k}( p -1) \] is the
smallest multiple of $p$ which is greater than $\bk = |\Pmr_k^{p-1}|/2 =
(p-1)(p^k -1)$. This algebraic factorization can be leveraged to obtain the
following result. 
\begin{lem} 
\label{M-factor-skeleton} 
For all $c \in \ZZ$,  the inclusion map  $\sfs''\colon \Mcal_{pc}
\hookrightarrow \EO \sma \CP^\infty_{pc} $ of the splitting in
\Cref{splittingEOCPc} can be chosen so that the inclusion of $\Mcal_{pc}$
factors through $\EO \sma \CP^{\hat{\upbeta}_k + pc}_{pc}$ 
\[ 
 \begin{tikzcd}
 &  \Mcal_{pc} \dar["\sfs''"]  \ar[ld, dashed, "\sfsbar'"']\\
  \EO \sma \CP^{\hat{\upbeta}_k + pc}_{pc} \rar[hook, "\sk"] & \EO \sma \CP^\infty_{pc} 
 \end{tikzcd}
\] 
in the homotopy category of $\EO$-modules. 
\end{lem}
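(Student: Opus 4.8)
The plan is to exhibit the compact summand $\Mcal_{pc}$ of $\EO\sma\CPinfty_{pc}$ from \Cref{splittingEOCPc}, and the compact summand of $\EO\sma\CP^{\obk+pc}_{pc}$, by a matched pair of algebraic splittings compatible with the skeletal inclusion. The combinatorial heart is \Cref{grK*CPinfty}: since $\chibar$ acts on $\grK_*\CPinfty_{pc}\cong\K_*\{b_{pc},b_{pc+1},\dots\}$ by $b_i\mapsto(i-p^k+1)b_{i-p^k+1}$, the free $\grKCp$-summand is the span of the $\chibar$-orbits of the $b_m$ with $m\equiv0\pmod p$ and $m\ge pc+\bk$, and the finite summand $\Mmr_{pc}^{\grK}$ is the span of the remaining orbits, which all have length $<p$. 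As $\bk=(p-1)(p^k-1)\equiv1\pmod p$ and $\obk=p^k(p-1)=\bk+(p-1)$ is the least multiple of $p$ exceeding $\bk$, each short orbit is supported in cells $b_j$ with $j\le pc+\obk-p$, while $\grK_*\CP^{\obk+pc}_{pc}$ contains, beyond $\Mmr_{pc}^{\grK}$, exactly one more free orbit, that of $b_{pc+\obk}$. Hence $\grK_*\CP^{\obk+pc}_{pc}\cong\Fmr^{\prime}\oplus\Mmr_{pc}^{\grK}$ with $\Fmr^{\prime}$ free of rank one, compatibly with the skeletal inclusion into $\grK_*\CPinfty_{pc}$: the identity on $\Mmr_{pc}^{\grK}$, and $\Fmr^{\prime}$ into the free part.

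I would then lift this, using self-injectivity of $\KCp$ and $\grKCp$ (\Cref{selfinjective}), \Cref{lift-free-summand-from-grKCp}, and the elementary observation that a submodule whose associated graded is concentrated in Atiyah--Hirzebruch filtration $\le D$ lies in $\Fil_D$, to a $\KCp$-splitting $\K_*\CP^{\obk+pc}_{pc}\cong\Fmr^{\prime}\oplus\Mmr^{\prime}$ with $\Fmr^{\prime}$ free of rank one such that $\sk_*$ carries $\Mmr^{\prime}$ isomorphically onto a $\KCp$-summand $\Mmr_{pc}^{\K}:=\sk_*(\Mmr^{\prime})$ of $\K_*\CPinfty_{pc}$ with free complement $\Fmr_{pc}^{\K}$ (the complement is free because its associated graded is). Running the $\K$-to-$\E$ lifting of \Cref{K-to-E-splitting} (with \Cref{relinj}) on $\K_*\CP^{\obk+pc}_{pc}$ and on $\K_*\CPinfty_{pc}$ gives compatible $\ECp$-splittings $\E_*\CP^{\obk+pc}_{pc}\cong\Fmr^{\prime}\oplus\Mmr^{\prime}$ and $\E_*\CPinfty_{pc}\cong\Fmr_{pc}^{\E}\oplus\Mmr_{pc}^{\E}$ with $\Fmr^{\prime}$ free of rank one and $\Mmr_{pc}^{\E}=\sk_*(\Mmr^{\prime})$. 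Since $\CP^{\obk+pc}_{pc}$ and $\CPinfty_{pc}$ are even of finite type, $\EO\sma\CP^{\obk+pc}_{pc}$ and $\EO\sma\CPinfty_{pc}$ are freely filtered, so \Cref{EO-splitting-free} realizes these splittings: $\EO\sma\CP^{\obk+pc}_{pc}\simeq\E\vee\Mcal'$ with $\EEO_*\Mcal'\cong\Mmr^{\prime}$, and $\EO\sma\CPinfty_{pc}\simeq\Fcal_{pc}\vee\Mcal_{pc}$ with $\Fcal_{pc}\simeq\bigvee_\NN\E$ and $\EEO_*\Mcal_{pc}\cong\Mmr_{pc}^{\E}$. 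I take this last as the splitting of \Cref{splittingEOCPc}, so that $\sfs''$ is the inclusion of this $\Mcal_{pc}$.

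It remains to compare along $\sk$. The composite $\Mcal'\hookrightarrow\EO\sma\CP^{\obk+pc}_{pc}\xrightarrow{\sk}\EO\sma\CPinfty_{pc}\twoheadrightarrow\Fcal_{pc}\simeq\bigvee_\NN\E$ is null: $\Mcal'$ is compact, so it factors through a finite sub-wedge, and each coordinate $\Mcal'\to\E$ is detected on $\EEO_*$ by part (I) of \Cref{relASS-collapse}, the edge map $[\Mcal',\E]_{\EO}\to\Hom_{\EEO_*\E}(\EEO_*\Mcal',\EEO_*\E)$ being an isomorphism; but on $\EEO_*$ this coordinate factors through $\Mmr^{\prime}\xrightarrow{\sk_*}\Mmr_{pc}^{\E}\hookrightarrow\E_*\CPinfty_{pc}\twoheadrightarrow\Fmr_{pc}^{\E}$, which vanishes since $\Mmr_{pc}^{\E}$ is the kernel of the last projection. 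Therefore $\sk\circ(\Mcal'\hookrightarrow-)$ factors through $\Mcal_{pc}$, say as $(\Mcal_{pc}\hookrightarrow-)\circ\varphi$ with $\varphi\colon\Mcal'\to\Mcal_{pc}$; on $\KEO_*$, $\varphi_*\colon\Mmr_{pc}^{\K}\to\Mmr_{pc}^{\K}$ is the identity, so $\varphi$ is an equivalence by \Cref{KEO-detects-equivalences}. Setting $\sfsbar':=\bigl(\Mcal'\hookrightarrow\EO\sma\CP^{\obk+pc}_{pc}\bigr)\circ\varphi^{-1}$ then gives $\sk\circ\sfsbar'\simeq\bigl(\Mcal_{pc}\hookrightarrow\EO\sma\CPinfty_{pc}\bigr)=\sfs''$, the asserted factorization.

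The main obstacle will be the bookkeeping in the first two paragraphs: extracting from \Cref{grK*CPinfty} that the finite summand sits strictly below Atiyah--Hirzebruch filtration $\obk+pc$, so that the single extra free $\grKCp$-orbit born upon truncating at the $(\obk+pc)$-skeleton is harmless, and then carrying this compatibility through the lifts from $\grKCp$ to $\KCp$ to $\ECp$ and through \Cref{EO-splitting-free}, so that $\sk$ really identifies $\EEO_*\Mcal'$ with $\EEO_*\Mcal_{pc}$. The value $\obk=p^k(p-1)$ is forced here: it is the least multiple of $p$ exceeding $\bk=|\Pmr_k^{p-1}|/2=(p-1)(p^k-1)$, the length of a free $\Bcal(k)$-orbit on $\H_*\CPinfty$; this is also why the statement is phrased for the skeleta $\CP^{\obk+pc}_{pc}$ with lower index a multiple of $p$.
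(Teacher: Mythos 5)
Your proposal runs into a genuine gap in the first paragraph, and the gap propagates through the rest of the argument. You claim that $\grK_*\CP^{\obk+pc}_{pc}\cong\Fmr'\oplus\Mmr_{pc}^{\grK}$ with $\Fmr'$ a free $\grKCp$-module of rank one. Your arithmetic on $\Mmr_{pc}^{\grK}$ is right (it is supported in cells $b_j$ with $j\le pc+\obk-p$), and the orbit of $b_{pc+\obk}$ is indeed the unique full free orbit contained in the truncation. But those two pieces do not fill up $\grK_*\CP^{\obk+pc}_{pc}$: truncating at the $\obk$-skeleton also decapitates some of the free orbits of $\grK_*\CPinfty_{pc}$, and their tails become extra \emph{non-free} cyclic summands in the truncation. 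Concretely, take $p=3$, $k=1$, $c=0$, so $p^k-1=2$, $\bk=4$, $\obk=6$, and $\CP^{\obk+pc}_{pc}=\CP^6_+$ with cells $b_0,\dots,b_6$. Then $\chibar(b_5)=3\,b_3=0$ and $b_5$ is a top (there is no $b_7$), so $b_5$ spans a singleton $\grKCp$-summand. Thus $\grK_*\CP^6_0\cong\grKCp\{b_6\}\oplus\K_*[\chibar]\{b_3\}/\chibar^2\oplus\K_*\{b_0\}\oplus\K_*\{b_5\}$, while $\Mmr_0^{\grK}=\K_*\{b_0,b_1,b_3\}$; the complement has $\K_*$-rank $4\neq p=3$ and is not free, since it is $\grKCp\{b_6\}\oplus\K_*\{b_5\}$. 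In $\grK_*\CPinfty_0$ the cell $b_5$ lies in $\Fmr_0^{\grK}$ (it is the bottom of the free orbit of $b_9$), and this is exactly where the rest of your argument fails: the compact summand $\Mcal'$ you obtain from \Cref{EO-splitting-free} applied to $\EO\sm\CP^{\obk+pc}_{pc}$ has $\KEO_*\Mcal'$ containing the class of $b_5$, which maps nontrivially into $\Fmr_{pc}^{\K}$ under the skeletal inclusion. Hence the composite $\Mcal'\to\EO\sm\CPinfty_{pc}\to\Fcal_{pc}$ is \emph{not} null on $\KEO_*$, and the factorization through $\Mcal_{pc}$ and the equivalence $\varphi$ in your third paragraph cannot be constructed. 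You also cannot simply declare $\Mmr'=\Mmr_{pc}^{\grK}$ and its complement to be the `free' part when invoking \Cref{EO-splitting-free}, because that lemma requires the free factor to actually be free, and here the complement is not.

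The paper sidesteps this accounting entirely. Instead of matching compact summands of the truncation with $\Mcal_{pc}$, it works with the cofiber sequence $\EO\sm\CP^{\obk-1}_+\to\EO\sm\CPinfty_+\xrightarrow{\cosk}\EO\sm\CPinfty_{\obk}$. The only algebraic input needed is that the free part $\Fmr_0^{\K}$ of $\K_*\CPinfty_+$ contains every cell $b_i$ with $i\ge\obk$ (which follows from your own analysis, since the required bound $(p-1-r)(p^k-1)\le\bk<\obk$ holds for every residue $r$), so the composite $\EO\sm\CPinfty_+\xrightarrow{\sfs\circ\sff}\EO\sm\CPinfty_+\xrightarrow{\cosk}\EO\sm\CPinfty_{\obk}$ is a $\KEO_*$-surjection; taking its fiber $\Rcal$, one shows $\Rcal\simeq\EO\sm\CP^{\obk-1}_+$ by \Cref{KEO-detects-equivalences}, and the nullity of $\sff\circ\sfs'$ gives the desired lift $\Mcal_0\to\Rcal\simeq\EO\sm\CP^{\obk-1}_+$. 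This avoids ever identifying the compact summand of the finite skeleton, and also produces a factorization through a smaller skeleton ($\CP^{\obk-1}_+$ rather than $\CP^{\obk}_+$) than the statement requires. If you want to repair your proof along your original lines, you would need to replace the attempted identification of compact summands with a direct argument that the composite $\Mcal_{pc}\to\EO\sm\CPinfty_{pc}\to\EO\sm\CPinfty_{pc+\obk+1}$ (the cofiber of $\sk$) is null, which is essentially the paper's strategy rephrased.
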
 
\begin{proof} 
We present the argument for $c =0$. The general argument follows from the
$\Bcal(k)$-module isomorphism of \Cref{Bk-thom-iso}. 

We start with an arbitrary splitting of $\EO \sma \CPinfty_+$, where  
\[ \begin{tikzcd} 
\Fcal_0 \rar["\sfs"] & \EO \sm \CPinfty_+ \rar["\sff"] & \Fcal_0 
\end{tikzcd}\]
\[ \begin{tikzcd}
\Mcal_0 \rar["\sfs'"] & \EO \sm \CPinfty_+ \rar["\sff'"] & \Mcal_0,
\end{tikzcd}\]
are the inclusion and projection maps of the components.
We will modify this splitting to produce a new one which satisfies our conclusion.

The free summand $\Fmr_0$ of the $\Bcal(k)$-module $\mr{H}_*\CPinfty$ surjects
onto $\mr{H}_*\CPinfty_{\obk}$ under the coskeletal map. Let $\Rcal$ be the
fiber of $\cosk \circ \sfs \circ \sff$. By \Cref{leading-term-of-grK(X)},
the composite
\[ \begin{tikzcd}
\EO \sm \CPinfty_+ \rar["\sff"] & \Fcal_0 \rar["\sfs"] & \EO \sm \CPinfty_+ \rar["\cosk"] & \EO \sm \CPinfty_{\obk}
\end{tikzcd}\]
induces a surjection on $\KEO_*(-)$. Hence, the map  
\[\begin{tikzcd}
\Rcal \rar & \EO \sm \CPinfty_+,
\end{tikzcd}\]
 induces an
injection on $\KEO_*(-)$. Because $\EO \sm \CP^{\obk-1}_+$ is the fiber of
$\cosk$, there is a map
\[\begin{tikzcd}
 \alpha\colon \Rcal \rar & \EO \sm \CP^{\obk -1}_+. 
\end{tikzcd} \]
Since $\KEO_*(\cosk \circ \sfs \circ \sff) = \KEO_*(\cosk)$, the  induced map 
\[\begin{tikzcd}
\KEO_*\alpha \colon \KEO_*\Rcal \rar &\KEO_*( \EO \sm \CP^{\obk -1}_+) \cong  \K_* \CP^{\obk -1}
\end{tikzcd}\]
is an isomorphism of $\K_*$-modules. By \Cref{KEO-detects-equivalences},
$\alpha$ is a weak equivalence of $\EO$-modules. 

Because $\sff \circ \sfs'$ is null, the composite
$(\cosk \circ \sfs \circ \sff)\circ \sfs'$ 
is also null and therefore there is a map  $\Mcal_0 \to \Rcal$. Now let
$\sfsbar' \colon\Mcal_0 \to \mathcal{R} \overset{\simeq}\longrightarrow \EO \sm
\CP^{\obk -1}_+ $ and define $\sfs''$ to be the composite   
\[ 
\begin{tikzcd}
\Mcal_0 \rar["\sfsbar'"] & \EO \sm \CP^{\obk -1}_+  \rar & \EO \sm \CPinfty_+ .
\end{tikzcd}
\]
It is easy to see that $\KEO_*(\cofiber(\sfs'')) \cong \KEO_* \Fcal_0$. Thus, by \Cref{EO-splitting-free}, 
\[ \cofiber(\sfs'') \simeq \Fcal_0 \simeq \bigvee_{\NN} \E \] 
 so $\cofiber(\sfs'')$ is a split summand of $\EO \sma \CPinfty_+$.
\end{proof}
\begin{lem}
\label{Bk-thom-iso}
The $\HF_p$-Thom isomorphism $\H_{*+ c}\CPinfty_{c} \cong \H_{*}\CPinfty_+$ is a map of
$\Bcal(k)$-modules if $c$ is divisible by $p$. 
\end{lem}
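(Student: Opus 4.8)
The plan is to reduce the statement to a single Cartan-formula computation on cohomology. Since $\Bcal(k)\cong\F_p[\Pmr_k]/(\Pmr_k^p)$ is generated as an algebra by the one operation $\Pmr_k$ (\Cref{defn:double-steenrod}), an $\F_p$-linear map between $\Bcal(k)$-modules is $\Bcal(k)$-linear exactly when it commutes with $\Pmr_k$. So it suffices to check that the $\HFp$-Thom isomorphism intertwines the two $\Pmr_k$-actions when $p\mid c$. I would phrase this on the cohomology side: as recalled in the notation preceding \Cref{grK*CPinfty}, $\H^{*}\CPinfty_c$ is free of rank one over $\H^{*}\CPinfty_+\cong\F_p\llbracket x\rrbracket$ on the Thom class $u_c$, so the cohomological Thom isomorphism is the multiplication map $\psi\colon \H^{*}\CPinfty_+\to\H^{*}\CPinfty_c$, $z\mapsto z\cdot u_c$. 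One recovers the homological statement at the end by $\F_p$-linear duality, since for the finite-type spectra in play a $\Bcal(k)$-module isomorphism dualizes to an isomorphism of $\Bcal(k)_*$-comodules, i.e. of $\Bcal(k)$-modules in the sense of the lemma.

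The computation is then immediate. Because $\Pmr_k$ is primitive in $\Pcal$ (\Cref{prim}), its action obeys the Cartan formula, so for $z\in\H^{*}\CPinfty_+$
\[
\Pmr_k(\psi(z))=\Pmr_k(z\cdot u_c)=\Pmr_k(z)\cdot u_c+z\cdot\Pmr_k(u_c)=\psi(\Pmr_k(z))+z\cdot\Pmr_k(u_c).
\]
Thus the failure of $\psi$ to commute with $\Pmr_k$ is measured entirely by the term $z\cdot\Pmr_k(u_c)$. Here I would quote the computation already carried out inside the proof of \Cref{grK*CPinfty}, namely $\Pmr_k(u_c)=c\,x^{p^k-1}u_c$; hence $z\cdot\Pmr_k(u_c)=c\,\psi(x^{p^k-1}z)$, which vanishes in $\F_p$-coefficients precisely when $c\equiv 0\pmod p$. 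In that case $\psi$ commutes with $\Pmr_k$, hence with all of $\Bcal(k)$, so $\psi$ is a $\Bcal(k)$-module isomorphism; dualizing yields the lemma.

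I do not expect a genuine obstacle here — the argument is just the displayed Cartan identity together with the known value of $\Pmr_k(u_c)$. The points needing a little care are purely bookkeeping: that $\Bcal(k)$-linearity really does reduce to commuting with $\Pmr_k$ alone (no separate compatibility with the coproduct is needed for a module map, since $\Bcal(k)$ is generated by $\Pmr_k$, and $\H_*\CPinfty_+$ and $\H_*\CPinfty_c$ are genuine $\Bcal(k)$-modules, as already used in \Cref{grK*CPinfty}), and that $u_c$ is only determined up to an $\F_p^{\times}$-scalar, which affects neither $\psi$ up to isomorphism nor the identity $\Pmr_k(u_c)=c\,x^{p^k-1}u_c$. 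It is worth noting that the hypothesis $p\mid c$ is sharp: when $c$ is prime to $p$ — for instance $c=1$ — the error term $c\,x^{p^k-1}u_c=\Pmr_k(u_c)$ is nonzero, so the Thom isomorphism genuinely fails to be $\Bcal(k)$-equivariant.
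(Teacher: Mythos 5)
Your proof is correct and follows essentially the same route as the paper: reduce by duality to the cohomological Thom isomorphism, apply the Leibniz rule (justified by primitivity of $\Pmr_k$) to $z\cdot u_c$, and observe that the error term $\Pmr_k(u_c)=c\,x^{p^k-1}u_c$ vanishes mod $p$ when $p\mid c$. The only cosmetic difference is that you spell out the reduction to $\Pmr_k$-equivariance and the sharpness remark, which the paper leaves implicit.
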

\begin{proof} 
Since the action of $\mathcal{B}(k)$ on $\H_*(-)$ is obtained from the action of
$\mathcal{B}(k)$ on $\H^*(-)$ via the isomorphism \[ \H_*(-) \cong
\Hom_{\FF_p}( \H^*(-), \FF_p),\] it is enough to show the $\H\FF_p$-Thom
isomorphism in cohomology 
\[  \uptau \colon \H^{*}\CPinfty_+  \overset{\cong}\longrightarrow \H^{*+
c}\CPinfty_{c} \]
is a map of $\mathcal{B}(k)$-modules when $p$ divides $c$. Note that
$\Pmr_k(u_c) = c x^{p^k -1}\cdot u_c$, where  $u_c \in \H^*\CPinfty_{pc}$ is the
Thom class. Therefore,  $\Pmr_k(u_c) = 0$ when $c$ is divisible by $p$ and 
\[ \uptau(\Pmr_k(x^i)) = \Pmr_k(x^i) \cdot u_c = \Pmr_k(x^i \cdot u_c) = \Pmr_k(\uptau(x^i)), \]  
as desired. 
\end{proof}

Our next goal is to prove the following $\EO$-Thom isomorphism. 
\begin{thm} \label{EO-Thom-iso} There exists an equivalence of $\EO$-modules  
\begin{equation} \label{EO-thom-equiv}
  \EO \sma \CPinfty_{\sfr} \simeq \EO \sma \Sigma^{2 \sfr} \CPinfty_+ ,
  \end{equation}
where $\sfr =\Theta( \upgamma^{\obk -1}, \EO)$. 
\end{thm}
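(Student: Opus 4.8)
The plan is to bootstrap the \emph{skeletal} Thom isomorphism that the definition of $\sfr$ already provides. Write $d := \obk - 1 = p^{k+1}-p^k-1$, so that $\sfr = \Theta(\upgamma^{d},\EO)$; by definition $\sfr\upgamma^{d}$ is $\EO$-orientable over $\CP^{d}$, and \Cref{thom-skeleton} turns this into an $\EO$-module equivalence
\[
\omega\colon \EO\sm\CP^{d+\sfr}_{\sfr}\;\xrightarrow{\ \simeq\ }\;\EO\sm\Sigma^{2\sfr}\CP^{d}_{+}
\]
that respects Atiyah--Hirzebruch filtrations; here $\CP^{d+\sfr}_{\sfr}=\M(\sfr\upgamma^{d})$ and $\Sigma^{2\sfr}\CP^{d}_{+}$ are the $2(d+\sfr)$-skeleta of $\CPinfty_{\sfr}$ and $\Sigma^{2\sfr}\CPinfty_{+}$. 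I would first record that $\sfr$ is a power of $p$ (\Cref{extendorientation}) with $p\mid\sfr$: otherwise $\sfr=1$ and $\EO$ would be complex orientable, which it is not (compare \Cref{Hood} and \Cref{LKW}). This divisibility is precisely the hypothesis under which \Cref{M-factor-skeleton} and \Cref{Bk-thom-iso} apply.

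Next I would reduce to the compact summands. By \Cref{splittingEOCPc} fix $\EO$-module splittings $\EO\sm\CPinfty_{\sfr}\simeq\Fcal_{\sfr}\vee\Mcal_{\sfr}$ and $\EO\sm\CPinfty_{+}\simeq\Fcal_{0}\vee\Mcal_{0}$ with $\Fcal_{c}\simeq\bigvee_{\NN}\E$ and $\Mcal_{c}$ compact. Since $\E$ is $2$-periodic, $\Sigma^{2\sfr}\Fcal_{0}\simeq\bigvee_{\NN}\E\simeq\Fcal_{\sfr}$, so the whole content of the statement is an equivalence $\Mcal_{\sfr}\simeq\Sigma^{2\sfr}\Mcal_{0}$ compatible with $\omega$. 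Using \Cref{M-factor-skeleton} with $c=\sfr/p$ (whose proof, via \Cref{Bk-thom-iso}, in fact places the compact summand inside the $2(d+\sfr)$-skeleton) I would choose the splittings so that the inclusion $\Mcal_{\sfr}\hookrightarrow\EO\sm\CPinfty_{\sfr}$ factors through $\EO\sm\CP^{d+\sfr}_{\sfr}$, and likewise --- applying the case $c=0$ and suspending by $2\sfr$ --- so that $\Sigma^{2\sfr}\Mcal_{0}$ factors through $\EO\sm\Sigma^{2\sfr}\CP^{d}_{+}$. Composing with $\omega$ then produces a candidate $\EO$-module map
\[
\phi\colon \Mcal_{\sfr}\longrightarrow\EO\sm\CP^{d+\sfr}_{\sfr}\xrightarrow{\ \omega\ }\EO\sm\Sigma^{2\sfr}\CP^{d}_{+}\hookrightarrow\EO\sm\Sigma^{2\sfr}\CPinfty_{+}\twoheadrightarrow\Sigma^{2\sfr}\Mcal_{0}.
\]

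To finish I would show $\phi$ is an equivalence. Every module in sight is relatively projective, so by \Cref{KEO-detects-equivalences} it suffices to see that $\phi_{*}$ is an isomorphism on $\KEO_{*}(-)=\K_{*}(-)$, and since all the maps defining $\phi$ preserve the Atiyah--Hirzebruch filtration one may pass to associated graded. By \Cref{EO-splitting-free} together with \Cref{KCPinfty-decomposition}, $\gr\KEO_{*}\Mcal_{c}$ is the canonical finite $\grKCp$-summand $\Mmr^{\K}_{c}$ of $\gr\K_{*}\CPinfty_{c}$ --- the complement of the free submodule on the classes $b_{pi}$ of \Cref{grK*CPinfty} --- and it is supported inside the skeletal piece. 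The map $\gr(\omega_{*})$ is a $\grKCp$-module isomorphism $\gr\K_{*}\CP^{d+\sfr}_{\sfr}\xrightarrow{\ \cong\ }\Sigma^{2\sfr}\gr\K_{*}\CP^{d}_{+}$ shifting internal degree by $2\sfr$; because the splitting of $\gr\K_{*}\CPinfty_{c}$ into its free and finite $\grKCp$-parts is determined intrinsically by the $\chibar$-action and the bigrading, $\gr(\omega_{*})$ necessarily carries $\Mmr^{\K}_{\sfr}$ isomorphically onto $\Sigma^{2\sfr}\Mmr^{\K}_{0}$. Chasing the definition of $\phi$ shows $\gr(\phi_{*})$ is exactly this isomorphism, so $\phi$ is an equivalence. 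Assembling $\phi$ with any equivalence $\Fcal_{\sfr}\simeq\Sigma^{2\sfr}\Fcal_{0}$ then gives an $\EO$-module map $\EO\sm\CPinfty_{\sfr}\to\EO\sm\Sigma^{2\sfr}\CPinfty_{+}$ that is an isomorphism on $\KEO_{*}$, hence an equivalence by \Cref{KEO-detects-equivalences}; this is \eqref{EO-thom-equiv}.

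The hard part will be the bookkeeping in the third step: arranging the splitting of \Cref{splittingEOCPc} so that \emph{both} $\Mcal_{\sfr}$ and $\Sigma^{2\sfr}\Mcal_{0}$ land in skeleta small enough for the skeletal Thom isomorphism $\omega$ to be available on each --- which forces one to use the sharp form of \Cref{M-factor-skeleton} supplied by \Cref{Bk-thom-iso} --- and then verifying that $\omega$, through its associated graded, genuinely identifies the two compact summands. This is exactly where the hypothesis $p\mid\sfr$ is indispensable: for $p\nmid\sfr$ the Thom class of $\sfr\upgamma$ is not $\Bcal(k)$-primitive, the $\HF_{p}$-Thom isomorphism fails to be $\Bcal(k)$-linear, and one loses control of where the compact summand of $\EO\sm\CPinfty_{\sfr}$ sits relative to the domain of $\omega$, so the extension across skeleta breaks down.
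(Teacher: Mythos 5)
Your proposal follows the same overall strategy as the paper up through the construction of the skeletal Thom isomorphism $\omega$ and the factorization from \Cref{M-factor-skeleton}, but it diverges in the endgame in a way that introduces a gap. The paper does not attempt to compare the two chosen compact summands directly: it constructs the split injection $\sf{c}\colon \Mcal_{\sfr}\to \EO\sm\Sigma^{2\sfr}\CPinfty_+$ and shows that its \emph{cofiber} is a wedge of $\E$'s (using \Cref{lift-free-summand-from-grKCp} and \Cref{EO-splitting-free}), from which the statement follows by counting summands. You instead try to produce an equivalence $\phi\colon \Mcal_{\sfr}\to\Sigma^{2\sfr}\Mcal_0$ by post-composing $\sf{c}$ with the projection onto the chosen summand $\Sigma^{2\sfr}\Mcal_0$, and then verify $\phi$ on associated graded.

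The gap is in the sentence claiming that ``the splitting of $\gr\K_*\CPinfty_c$ into its free and finite $\grKCp$-parts is determined intrinsically by the $\chibar$-action and the bigrading, so $\gr(\omega_*)$ necessarily carries $\Mmr^{\K}_{\sfr}$ isomorphically onto $\Sigma^{2\sfr}\Mmr^{\K}_0$.'' This is not so: in the proof of \Cref{grK*CPinfty} only the free \emph{submodule} $\Fmr^{\gr\K}_c$ (spanned by the specified classes $b_{pi}$) is canonical; the finite complement $\Mmr^{\gr\K}_c$ is produced from the self-injectivity of $\grKCp$ (\Cref{selfinjective}) and is a non-canonical choice of splitting. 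A $\grKCp$-module isomorphism such as $\gr(\omega_*)$ is under no obligation to preserve a chosen complement --- it could carry part of $\Mmr^{\K}_{\sfr}$ into $\Fmr^{\K}_0$ --- so the projection onto $\Sigma^{2\sfr}\Mmr^{\K}_0$ need not be injective, let alone an isomorphism. To repair this you would either have to pick the two splittings jointly so that they are compatible with $\omega$ (a nontrivial bookkeeping step you do not carry out), or, as the paper does, bypass the projection entirely: show that $\sf{c}$ is a split injection by arguing that $\gr\KEO_*\cofiber(\sf{c})$ is a free $\grKCp$-module (lift via \Cref{lift-free-summand-from-grKCp}), which only uses the canonical free submodule and avoids any appeal to the uniqueness of the finite complement. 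The rest of your proposal --- the reduction using $2$-periodicity to identify $\Fcal_{\sfr}\simeq\Sigma^{2\sfr}\Fcal_0$, the use of $p\mid\sfr$ to invoke \Cref{Bk-thom-iso}, and the passage to $\gr\KEO_*$ with \Cref{KEO-detects-equivalences} --- matches the paper.
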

\begin{proof} 
Since $\sfr = \order( \upgamma^{\obk -1}, \EO)$, we have an $\EO$-Thom
isomorphism
\begin{equation} \label{EOthom-truncate}
\EO \sma \CP_{\sfr}^{\obk -1 + \sfr} \simeq \EO \sma \Sigma^{2\sfr} \CP^{\obk -1}_+. 
\end{equation}
Using \Cref{splittingEOCPc} and \Cref{M-factor-skeleton}, we construct the composite map 
\[ 
\begin{tikzcd}
\sf{c}\colon\Mcal_\sfr \rar["\sfsbar'"] & \EO \sma \CP_{\sfr}^{\obk -1 + \sfr} \rar["\simeq"]  & \EO \sma\Sigma^{2\sfr} \CP_{+}^{\obk -1 } \rar &  \EO \sma \Sigma^{2\sfr} \CP_{+}^{\infty }.
\end{tikzcd}
\]
 By \Cref{thom-skeleton}, \Cref{EOthom-truncate} respects the Atiyah-Hirzebruch
filtration, therefore  we analyze the image of the map induced by $\sfc$ on AHSS
calculating  $\KEO_*(-)$-groups.  

Since  $\Mcal_{\sfr}$ does not have a natural Atiyah-Hirzebruch filtration, we induce one 
\[ 
\begin{tikzcd}
\Mcal_\sfr^{(0)} \rar & \dots \rar & \Mcal_\sfr^{(\obk -1)} = \Mcal_\sfr^{(\obk) } = \dots =  \Mcal_\sfr
\end{tikzcd}
\]   
 by pulling back the Atiyah-Hirzebruch filtration on $\EO \sma \Sigma^{2\sfr} \CP_{+}^{\infty }$.  The associated graded of the correponding filtration on $ \KEO_* \Mcal_{\sfr}$, denote it by $\gr  \KEO_* \Mcal_{\sfr}$,  is a  $\grKCp$-module, and the induced map 
 \begin{equation} \label{grKEO-f}
 \begin{tikzcd}
\gr \KEO_*({\sf c}) \colon \gr  \KEO_* \Mcal_{\sfr} \rar &  \gr \K_* \Sigma^{2 \sfr}\CPinfty_+,
 \end{tikzcd}
 \end{equation}
is an injection with image $\Sigma^{2 \sfr}\Mmr_0^{ \ \gr \K}$. This is because
of the explicit description of $\Sigma^{2 \sfr}\Mmr_0^{ \ \gr \K}$ as in the
proof of \Cref{grK*CPinfty} and \Cref{Bk-thom-iso}.   The cofiber  of ${\sf c}$
also admits a filtration
\[ 
\begin{tikzcd}
\cofiber({\sf c}) := \colim_n\{ \cofiber({\sf c})^{(0)} \rar & \cofiber({\sf c})^{(1)} \rar & \dots \} ,
\end{tikzcd}
\]   
where $\cofiber({\sf c})^{(i)} = \mr{Cofiber}(\Mcal_\sfr^{(i)} \to \EO \sma
\Sigma^{2\sfr} \CP^i_+)$, such that the associated graded of the induced
filtration on $\KEO_*\cofiber({\sf c})$ is a free  $\grKCp$-module isomorpic to
$\Sigma^{2\sfr}\Fmr_0^{\ \gr \K}$. Thus, by \Cref{lift-free-summand-from-grKCp}
and \Cref{EO-splitting-free}
\[\cofiber({\sf c}) \simeq \Sigma^{2 \sfr} \Fcal_0\] 
is a split summand of $\EO \sma \Sigma^{2 \sfr} \CPinfty_+$. Therefore, we have
an equivalence 
\[ 
\EO \sma \Sigma^{2 \sfr} \CPinfty_+ \simeq \Mcal_{\sfr} \vee \Sigma^{2r} \Fcal_0 \simeq \Mcal_{\sfr} \vee \Fcal_{\sfr} \simeq \EO \sma \CPinfty_{\sfr} 
\]
as desired. 
\end{proof}

\begin{cor} \label{cor:order-reduce} $\order(\upgamma, \EO) = \order( \upgamma^{\obk -1}, \EO).$
\end{cor}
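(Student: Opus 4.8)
The plan is to prove the two divisibilities $\order(\upgamma^{\obk-1},\EO)\mid\order(\upgamma,\EO)$ and $\order(\upgamma,\EO)\mid\order(\upgamma^{\obk-1},\EO)$ separately, where $\obk=p^{k}(p-1)$ and $\upgamma^{\obk-1}$ is the restriction of $\upgamma$ to the skeleton $\CP^{\obk-1}\subset\CPinfty$. The first divisibility is formal: restriction along $\CP^{\obk-1}\hookrightarrow\CPinfty$ induces a homomorphism of abelian groups $[\CPinfty_+,\BGL(\EO)]\to[\CP^{\obk-1}_+,\BGL(\EO)]$ carrying the class classifying $\upgamma$ (in the sense of \Cref{defn:orientorder}) to the class classifying $\upgamma^{\obk-1}$, and a homomorphism cannot increase the order of an element; equivalently, by \Cref{thom-skeleton}, if $n\upgamma$ is $\EO$-orientable then so is its restriction $n\upgamma^{\obk-1}$.

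For the reverse divisibility, write $\sfr:=\order(\upgamma^{\obk-1},\EO)$; this is finite since it divides the Atiyah--Todd number $\order(\upgamma^{\obk-1},\SS)$ of \Cref{complex-James-periodicity-formula}. It suffices to prove that $\sfr\upgamma$ is $\EO$-orientable, for then $\order(\upgamma,\EO)\mid\sfr$. Here I would invoke \Cref{EO-Thom-iso}, which furnishes an $\EO$-module equivalence $\EO\sma\CPinfty_{\sfr}\simeq\EO\sma\Sigma^{2\sfr}\CPinfty_+$; since $\CPinfty_{\sfr}=\M(\sfr\upgamma)$ this is an $\EO$-Thom isomorphism for $\sfr\upgamma$. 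Composing with $\id\sma\Sigma^{2\sfr}$ of the collapse $\CPinfty_+\to S^0$ and passing to the adjoint produces a class $u\in\EO^{2\sfr}(\M(\sfr\upgamma))$, and by the classical identification of orientations with Thom classes recalled after \eqref{thom-iso-ii} it is enough to check that $u$ restricts to a unit of $\pi_0\EO$ on the bottom cell $S^{2\sfr}\hookrightarrow\M(\sfr\upgamma)$ (one point of the connected base $\CPinfty$ suffices).

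This last verification is the only delicate step, and I expect it to be the main obstacle: one must promote the \emph{abstract} $\EO$-module equivalence of \Cref{EO-Thom-iso} to a genuine $\EO$-orientation. I would argue it by inspecting the construction in the proof of \Cref{EO-Thom-iso}: that equivalence is built to be compatible with the Atiyah--Hirzebruch filtrations on both sides, starting from the filtration-preserving truncated Thom isomorphism \eqref{EOthom-truncate} (which is a bona fide Thom isomorphism because $\sfr=\order(\upgamma^{\obk-1},\EO)$) together with the skeletal factorization of \Cref{M-factor-skeleton}. Consequently, on $\KEO_*(-)$ the equivalence carries the bottom class $b_{\sfr}$ of $\K_*\CPinfty_{\sfr}$ to a unit multiple of $\Sigma^{2\sfr}b_0$ modulo higher Atiyah--Hirzebruch filtration, and the collapse map sends this to a unit of $\pi_0\K$. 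Hence $u$ restricts on the bottom cell to an element of $\pi_0\EO$ whose image in $\pi_0\K$ is a unit; since $\pi_0\EO\to\pi_0\K$ is a local homomorphism of complete local rings it reflects units, so this element is itself a unit of $\pi_0\EO$. Therefore $u$ is an $\EO$-Thom class for $\sfr\upgamma$, so $\sfr\upgamma$ is $\EO$-orientable, $\order(\upgamma,\EO)\mid\sfr$, and combining with the first divisibility gives $\order(\upgamma,\EO)=\sfr=\order(\upgamma^{\obk-1},\EO)$. (Alternatively, the reverse divisibility can be phrased as the statement that the $\EO$-Thom class of the already-orientable bundle $\sfr\upgamma^{\obk-1}$ extends over the higher cells of $\M(\sfr\upgamma)$, and \Cref{EO-Thom-iso} is exactly what solves this extension problem.)
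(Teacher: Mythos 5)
Your argument follows the same route as the paper: both directions of divisibility, with the formal one from restriction along $\CP^{\obk-1}\hookrightarrow\CPinfty$ and the substantive one coming from \Cref{EO-Thom-iso} together with the identification $\M(\sfr\upgamma)\simeq\CPinfty_{\sfr}$. The paper simply declares the corollary ``Immediate from \Cref{EO-Thom-iso}'', while you isolate and address a genuine subtlety that the paper glides over: an \emph{abstract} $\EO$-module equivalence $\EO\sma\M(\sfr\upgamma)\simeq\Sigma^{2\sfr}\EO\sma\CPinfty_+$ is not, a priori, the same thing as an $\EO$-orientation of $\sfr\upgamma$ in the sense of \Cref{defn:orientorder} (a null-homotopy of the classifying map to $\BGL(\EO)$, or equivalently a Thom class restricting to a unit on the bottom cell). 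Your plan --- trace the construction in \Cref{EO-Thom-iso} back to the genuine truncated Thom isomorphism \eqref{EOthom-truncate} via \Cref{M-factor-skeleton}, check that the resulting class in $\EO^{2\sfr}(\M(\sfr\upgamma))$ restricts to a unit in $\pi_0\K$, and then lift unitness along the local ring map $\pi_0\EO\to\pi_0\K$ --- is the right way to fill that gap, and is more careful than the paper.

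One place where your sketch could be tightened: the overall equivalence in \Cref{EO-Thom-iso} is assembled from two pieces, the filtration-preserving map ${\sf c}$ on the compact summand $\Mcal_{\sfr}$ and a soft identification of the free parts $\Fcal_{\sfr}\simeq\Sigma^{2\sfr}\Fcal_0$ (both are $\bigvee_{\NN}\E$, using $2$-periodicity of $\E$), and only the former manifestly respects the Atiyah--Hirzebruch filtration. So the claim that $b_{\sfr}$ maps to a unit multiple of $\Sigma^{2\sfr}b_0$ mod higher filtration really requires that the bottom class $b_{\sfr}$ lies in the \emph{compact} summand $\Mmr_{\sfr}$, not the free one. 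This holds because $\bk\equiv 1\pmod p$ and $\sfr$ is a positive power of $p$ (so $p\nmid \bk+\sfr$, and the smallest generator $b_{pi}$ of the free part sits strictly above filtration $2\sfr$), but that small arithmetic check should be made explicit; otherwise the argument is sound.
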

\begin{proof} 
Immediate from \Cref{EO-Thom-iso} and the fact that $\Mmr(\sfr
\hspace{1pt} \upgamma) \cong \CP_\sfr^{\infty}.$
\end{proof}
\Cref{main1.5} follows from \Cref{cor:order-reduce} because 
$\order(\EO, \upgamma^{\obk-1})$ divides \linebreak $\order(\SS_{p}, \upgamma^{\obk -1}) =
p^{p^k -1}$ (see \Cref{complex-James-periodicity-formula}). 

\subsection{The $\mr{S}^1$-Tate fixed points of $\EO$}\label{sub:Tate}
We shift our focus to identifying the $\Smr^1$-Tate spectrum of $\EO$. If a
group $\Gmr$ acts on a spectrum $\Xmr$, the Tate spectrum $\Xmr^{\tG}$ of $\Xmr$ is the
cofiber of the norm map from the homotopy orbits spectrum $\Xmr_{\hG}$ of $X$ to
the homotopy fixed points spectrum $\Xmr^{\hG}$ of $X$
\[ \Xmr^{\tG} \coloneqq \cofiber(\mr{Nm}\colon \Xmr_{\hG} \to \Xmr^{\hG}).\] 
If $\Gmr = \Smr^1$ and the action on $X$ is trivial, \cite{GMTate} gives an
alternate description of the Tate fixed points:
\begin{equation}
 \label{tate-lim}
 \Xmr^{\tSo} \simeq\lim_{c \to \infty} \Sigma^2\R \sm \CPinfty_{-c}.
\end{equation}
Inverse limits do not commute with $\pi_*(-)$. Instead, there
is a short exact sequence
 \[\begin{tikzcd}[column sep = small]
0\rar &
\lim^1_{c}(\R_*\CPinfty_{-c}) \rar&
\pi_*\R^{\tSo}\rar &
\lim_{c}(\R_*\CPinfty_{-c}) \rar&
0
\end{tikzcd}\]
called the ``Milnor sequence''.

\begin{proof}[Proof of \Cref{main-tate}]
By \Cref{M-factor-skeleton} we can choose a splitting of $\EO \sma \CPinfty_{i \obk}$ for all $i \in \ZZ$ such that we have the commutative diagram 
\begin{equation}
\label[diagram]{sexseq-of-EO-module-limit-diagrams}
\begin{tikzcd}
\cdots      \rar &
\Mcal_{(i-1) \obk} \rar["0"] \dar &
\Mcal_{i \obk}   \rar["0"]\dar &
\Mcal_{(i+1) \obk} \rar["0"]\dar &
\cdots\\
\cdots \rar &
\EO\sm\CPinfty_{(i-1)\obk}  \rar\dar &
\EO\sm\CPinfty_{i\obk}      \rar \dar &
\EO\sm\CPinfty_{(i+1)\obk}  \rar\dar&
\cdots\\
\cdots                \rar&
\Fcal_{(i-1) \obk}  \rar &
\Fcal_{i \obk}      \rar &
\Fcal_{(i+1) \obk}  \rar &
\cdots\\
\end{tikzcd}
\end{equation}
such that the horizontal maps in the top row are null maps, in the middle row
are coskeletal collapse maps.

The horizontal maps in the bottom row induce surjections in homotopy. To
see this, we filter $\Fcal_{i \obk}$ using the
Atiyah-Hirzebruch filtration of $\CPinfty_{i \obk}$. Let $\gr \KEO_* \Fcal_{i
\obk}$ denote the associated graded of the induced filtration on $\KEO_*\Fcal_{i
\obk}$. From \Cref{grK*CPinfty}, we observe that the composite 
\[ 
\begin{tikzcd}
f_i\colon \Fcal_{i \obk} \rar & \EO \sma  \CPinfty_{i \obk} \rar  &  \EO \sma  \CPinfty_{(i+1) \obk}  \rar & \Fcal_{(i+1) \obk}
\end{tikzcd}
 \]
induces a surjection in $\gr \KEO_*(-)$, and therefore a surjection in
$\KEO_*(-)$. By the Nakayama Lemma, $f_i$ induces surjection on
$\EEO_*(-)$. By studying the induced map of relative Adams spectral
sequence, we conclude that $f_i$ is a surjection. 

Consequently, we get a six-term exact sequence
\[\begin{tikzcd}
0\rar&
\lim_i\pi_*\Mcal_{ - i \obk}\rar&
\lim_i \EO_*\CPinfty_{-i \obk}\rar&
\lim_i \pi_*\Fcal_{-i \obk}
    \ar[dll, out= east, in=west, looseness=1.9, overlay,]\\
&
\limone_i \pi_*\Mcal_{- i \obk}\rar&
\limone_i \EO_*\CPinfty_{-i\obk}\rar&
\limone_i \pi_*\Fcal_{-i\obk}\rar&
0.
\end{tikzcd}\]
From \Cref{sexseq-of-EO-module-limit-diagrams},
\begin{align*}
\lim\nolimits_i\pi_*\susp^{-2i\sfr}\Mcal_0  &= 0\\
\limone_i \pi_*\susp^{-2i\sfr}\Mcal_0 &=0 \\
\lim\nolimits_i^1 \pi_* \Fcal_{-i\sfr} &= 0
\end{align*}
and
\[\lim\nolimits_i \Fcal_{-i\obk} \simeq    \prod_{-\infty< k <\infty} \E.\]
Thus, $\limone_i \EO_*\CPinfty_{-i\obk} = 0$ and 
\[ \EO^{\mr{t} \mr{S}^1} \simeq \lim\nolimits_i  \EO \sma \CPinfty_{-i} \simeq
\lim\nolimits_i \EO \sma \CPinfty_{-i\obk} \simeq \lim\nolimits_i\Fcal_{-i\obk}
\simeq  \prod_{-\infty< k <\infty} \E \] 
as desired. 
\end{proof}

\bibliographystyle{amsalpha}
\bibliography{EO-orientation}
\end{document}